\newcommand{\ddr}{\mathrm{d}}
 \newtheorem{theorem}{Theorem}[section]
 \newtheorem{lemma}[theorem]{Lemma}
 \newtheorem{corol}[theorem]{Corollary}
 \newtheorem{prop}[theorem]{Proposition}
 \newtheorem{example}[theorem]{Example}
 \newtheorem{Definition}[theorem]{Definition}
 \newtheorem{con1}[theorem]{Condition}
\theoremstyle{remark}
 \newtheorem{remark}[theorem]{Remark}
 \def\blemma{\begin{lemma}\sl{}\def\elemma{\end{lemma}}}
 \def\bcorollary{\begin{corol}\sl{}\def\ecorollary{\end{corol}}}
 \def\beqlb{\begin{eqnarray}}\def\eeqlb{\end{eqnarray}}
 \def\beqnn{\begin{eqnarray*}}\def\eeqnn{\end{eqnarray*}}
\theoremstyle{remark}
\def\bcorollary{\begin{corol}\sl{}\def\ecorollary{\end{corol}}}
\def\beqlb{\begin{eqnarray}}\def\eeqlb{\end{eqnarray}}
\def\beqnn{\begin{eqnarray*}}\def\eeqnn{\end{eqnarray*}}
\def\<{\langle}\def\>{\rangle}
\def\ep{\varepsilon}
\def\mbb{\mathbb}
\def\ar{\!\!&}
\begin{document}

		\centerline{\Large Time-changed spectrally positive L\'evy processes}
		 \bigskip
		\centerline{\Large started from infinity}

		\bigskip\bigskip\bigskip
		
		\centerline{\large  Cl\'ement Foucart \footnote{Universit\'e Sorbonne Paris Nord and Paris 8, Laboratoire Analyse, G\'eom\'etrie $\&$ Applications, UMR 7539. Institut Galil\'ee, 99 avenue J.B. Cl\'ement, 93430 Villetaneuse, France, foucart@math.univ-paris13.fr}, Pei-Sen Li \footnote{Institute for Mathematical Sciences, Renmin University of China, 100872 Beijing, P. R. China, peisenli@ruc.edu.cn} and Xiaowen Zhou \footnote{Department of Mathematics and Statistics, Concordia University, 1455 De Maisonneuve Blvd. W., Montreal, Canada, xiaowen.zhou@concordia.ca}}
			

		\begin{abstract}
			Consider a spectrally positive L\'evy process $Z$ with log-Laplace exponent $\Psi$  and a positive continuous function $R$ on $(0,\infty)$. We investigate the entrance from $\infty$ of the process $X$ obtained by changing time in $Z$ with the inverse of the additive functional $\eta(t)=\int_{0}^{t}\frac{\ddr s}{R(Z_s)}$. We provide a necessary  and sufficient condition for infinity to be an entrance boundary of the process $X$. Under this condition, the process can start from infinity and we study its speed of coming down from infinity. When the L\'evy process has a negative drift $\delta:=-\gamma<0$, sufficient conditions over $R$ and $\Psi$ are found for the process to come down from infinity along the deterministic function $(x_t,t\geq 0)$ solution to $\ddr x_t=-\gamma R(x_t)\ddr t$, with $x_0=\infty$.
			When $\Psi(\lambda)\sim c\lambda^{\alpha}$, with $\lambda \rightarrow 0$,  $\alpha\in (1,2]$, $c>0$ and $R$ is regularly varying at $\infty$ with index $\theta>\alpha$, the process comes down from infinity and we find a renormalisation in law of its running infimum at small times.
		\end{abstract}
		
		\noindent\textbf{MSC (2010):} primary 60J80; secondary 60H30, 92D15, 92D25.
		
		\noindent \textbf{Keywords.}
			Coming down from infinity; entrance boundary; hitting times; non-linear branching processes; regularly varying functions; spectrally positive L\'evy processes; time-change; weighted occupation times

	\section{Introduction}
	\setcounter{equation}{0}
	\indent{}
	A classical question in the theoretical study of Markov processes is to know if a given process can be started from its boundaries. In the framework of birth-death processes and diffusions, Feller's tests provide necessary and sufficient conditions for a boundary to be an entrance. Namely, the boundary cannot be reached but the process can start from it, see e.g. Anderson \cite[Chapter 8]{MR1118840} and Karlin and Taylor \cite[Chapter 15]{MR611513} for a comprehensive account on Feller's tests.  No explicit tests classifying boundaries are available for general Markov processes, and taylor-made criteria have to be designed for  a given class of processes. We mention for instance the recent article of D\"oring and Kyprianou \cite{2018arXiv180201672D}, where integral tests classifying the boundary $\pm{\infty}$ of diffusions with stable jumps are found.

	We consider in this article  the class of Markov processes obtained as time-changes of L\'evy processes with no negative jumps.
	We will provide a simple test for such processes to have infinity as entrance boundary and investigate further their small-time asymptotics when they leave $\infty$. Such studies have been carried out for instance for birth-death processes, see Bansaye et al. \cite{MR3595771}, Sagitov and France \cite{MR3707825}, and for Kolmogorov diffusions, see Bansaye et al. \cite{2017arXiv171108603B}.

	Time-changed spectrally positive L\'evy processes have been considered from the point of view of stochastic population models,  see  Li \cite{2016arXiv160909593L}, Li and Zhou \cite{2018arXiv180905759L} and  Li  et al. \cite{2017arXiv170801560L}, where they appear as  solutions to certain stochastic differential equations with jumps. We retain the name coined in those works and call the class of processes under study, \textit{nonlinear continuous-state branching processes} (nonlinear CSBPs for short).
	
	We now give a formal definition. Consider a continuous function $R$ on $[0,\infty)$ strictly positive on $(0,\infty)$, and $Z$ a L\'evy process with no negative jumps started from $x> 0$ with log-Laplace exponent $\Psi$, i.e. $\mbb{E} [e^{-\lambda Z_1}|Z_0=x]=e^{-\lambda x+\Psi(\lambda)}$ for any $\lambda\ge0.$
	We define a nonlinear CSBP with \textit{branching mechanism} $\Psi$ and  \textit{branching rate} $R$ as the process $(X_t,0\leq t <\zeta)$ given by
	\begin{equation}\label{timechangedef}X_t:=Z_{ \eta^{-1}(t)\wedge\tau_0^{-}} \end{equation} where $\tau_0^{-}$ is the first passage time below zero of $Z$ and for any $s\geq 0$,
	\[\eta(s):=\int_{0}^{s\wedge \tau^-_0}\frac{\ddr r}{R(Z_r)} \text{ and } \eta^{-1}(t):=\inf\{s\geq 0: \eta(s)>t\}\]
	with the convention $\inf\emptyset:=\infty$ and $\zeta=\eta(\tau_0^{-})\in (0,\infty]$.
	Since $R$ is strictly positive on $(0, \infty)$, for $0<t<\tau_0^-$, $\frac{1}{R(Z_r)}$ is bounded on $r\in [0, t]$, and hence, the integral $\eta(t)$ is finite for $0<t<\tau_0^-$.
	We also follow the terminology of branching processes by saying that a nonlinear CSBP is {\it supercritical}, {\it critical} or {\it subcritical} if respectively, $\gamma<0$, $\gamma=0$ or $\gamma>0$, where $\gamma=\Psi'(0+)$.
	
	When the function $R$ is the identity, the time-change procedure \eqref{timechangedef} above corresponds to the Lamperti transformation which relates a spectrally positive L\'evy process $Z$ into a CSBP. We refer to e.g. \cite[Theorem 12.2]{MR1406564},   Bingham \cite{MR0410961} and Caballero et al. \cite{MR2243877} for a study of CSBPs by random time-change. It is worth noticing that CSBPs cannot be started from $\infty$ as the boundary $\infty$ is absorbing, see e.g. \cite[Proposition 1]{MR2243877}. Processes obtained by the time-change \eqref{timechangedef} for a general function $R$ are natural generalisations of CSBPs, in which, heuristically the underlying population evolves non linearly at a rate governed by the function $R$.

	When the function $R$ is an exponential function, namely $R(x)=e^{\theta x}$, with $\theta>0$, for all $x\geq 0$,  the second Lamperti's transformation, see e.g. \cite[Theorem 13.1]{MR1406564}, entails that the process $(Y_t,t\geq 0)$ defined by $Y_t:=e^{-X_t}$ for $t\geq 0$, is a positive self-similar Markov process (PSSMP for short) with index of self-similarity $\theta$ and L\'evy parent process $Z$. The property of entrance from $\infty$ for the process $X$ is related to the possibility for $Y$ to be started at $0$. This latter topic has been extensively studied, see Caballero and Chaumont \cite{MR2243877} and the references therein.
	
	We shall mainly focus on subcritical and critical nonlinear CSBPs, as it will  come up, without surprise, that supercritical ones never come down from infinity. Observe also that (sub)-critical nonlinear CSBPs cannot explode (i.e. hit $\infty$ in finite time)  as in these cases,
	$\mbb{P}(\sup_{0\le t<\infty} X_t<\infty)=\mbb{P}(\sup_{t\le \tau^-_0}Z_t<\infty)=\mbb{P}(\tau^-_0<\infty)=1$.

	The following symbolic representation of a sample path of $X$  with $X_0=x$
	illustrates how the time-change \eqref{timechangedef} shrinks time without changing jump sizes and starting levels of the jumps of $Z$.\\
	\begin{figure}[h!]
		\centering \noindent
		\includegraphics[scale=1.4]
		{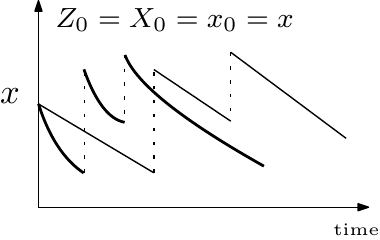}
		\caption{The lighter line represents a L\'evy process $Z$ with drift $-\gamma<0$ and the heavier one the subcritical nonlinear CSBP $X$ obtained by time-change. Both processes start at $x$ and the jump rate of $X$ is greater where $R$ takes large values.}
		\label{behaviornonlinearcb}
	\end{figure}

	If the branching rate function $R$ takes arbitrarily large values on a neighborhood of $\infty$, then  jumps will occur faster and faster as the initial state $x$ goes to infinity. The main problem is thus to see whether those jumps will prevent the process ``started at infinity" to escape or not. The second question is to understand how the process \textit{comes down from infinity} when $\infty$ is an entrance boundary.   We call speed of coming down from infinity, any function $(x_t,t\geq 0)$ such that  $X_t/x_t \underset{t\rightarrow 0}{\longrightarrow} 1$ in probability or almost-surely as $t$ goes to $0+$, when $X_0=\infty$ a.s.
	
	Denote by $-\gamma t$ the drift part of the underlying L\'evy process $Z$, with $\gamma>0$ in the subcritical case. We see heuristically in Figure \ref{behaviornonlinearcb}, that between each jump, the subcritical nonlinear CSBP, follows the deterministic flow
	\begin{equation}\label{ode}
	\ddr x_t=-\gamma R(x_t)\ddr t.
	\end{equation}
	A natural candidate for a speed function in the subcritical case is therefore $(x_t,t\geq 0)$ solution to \eqref{ode} started from $x_0=\infty$. We will find conditions on $\Psi$  and $R$ entailing indeed that $X_t/x_t \underset{t\rightarrow 0}{\longrightarrow} 1$ in probability or almost-surely. In the critical case $\gamma=0$, the nonlinear CSBP oscillates and we find some renormalisation in law of the  running infimum $(\underline{X}_t,t\geq 0)$ defined by $\underbar{X}_t=\inf_{0\le s\le t} X_s$ for specific branching mechanisms and branching rates.
	
	The paper is organized as follows. In Section \ref{preliminaries}, we give a rigorous definition of infinity as entrance boundary and gather fundamental properties of nonlinear CSBPs, such as their Feller property. We recall the definition of the scale function of a spectrally positive L\'evy process as well as some properties of weighted occupation times. We state our main results in Section \ref{results}. In Section \ref{firstentrance}, we study the  Laplace transform and the moments of the first passage times of nonlinear CSBPs started from infinity, below a fixed level. In Section \ref{asymptoticsTb}, we investigate the asymptotics of the latter, when the level tends to $\infty$. In Section \ref{speeds}, we study  the small-time asymptotics of the process  started from infinity.
	\\
	
	\noindent \textbf{Notation.} We use in the sequel Landau's notation: for any positive functions $f$ and $g$, we write $f(z)\sim g(z)$ as $z$ goes to a real number $a$, if $\frac{f(z)}{g(z)}\underset{z\rightarrow a}{\longrightarrow} 1$, $f(z)=O(g(z))$ as $z$ goes to $\infty$ if for a large enough $z_0$, $\sup_{z\geq z_0}\frac{f(z)}{g(z)}<\infty$ and $f(z)=o(g(z))$ as $z$ goes to $\infty$ if $\frac{f(z)}{g(z)}\underset{z\rightarrow \infty}{\longrightarrow} 0$. The integrability of a function $f$ in a neighborhood of $\infty$ (respectively, $0$) is written for short as $\int^{\infty}f(x)\ddr x<\infty$ (respectively, $\int_{0}f(x)\ddr x<\infty$) and $||f||$ denotes the supremum norm of $f$. For any $x\in [0,\infty]$, let $\mbb{P}_x$ be the distribution  of $X$ with $X_0=x$,  let $\mathbb{E}_x$ and $\text{Var}_x$ be the corresponding expectation and variance, respectively. The equality in law is denoted by $\overset{\mathcal{L}}{=}$.

	\section{Preliminaries }\label{preliminaries}
	We provide in this section some fundamental properties of the process $(X_t\geq 0)$ and give a rigorous definition of infinity as entrance boundary. Some results on spectrally positive L\'evy processes that we shall need later are stated.
	\subsection{Nonlinear CSBPs started from infinity}
	Recall that a nonlinear CSBP $X$ satisfies \eqref{timechangedef} with $R$ a continuous and strictly positive function on $(0,\infty)$ and where $\eta$ is the additive functional $\eta:t\mapsto \int_{0}^{t}\frac{\ddr s}{R(Z_s)}$. The process $X$ has obviously no negative jumps and we show in the next proposition that it has the Feller property. Recall that $\tau_0^{-}$ denotes the first passage time below $0$ of the L\'evy process $Z$ and $\zeta=\eta(\tau_{0}^{-})$ is the lifetime of the nonlinear CSBP $X$.
	
	\begin{prop}\label{Fellerproperty}   Assume that $R$ is continuous and strictly positive  on $(0,\infty)$ and that $Z$ is a spectrally positive L\'evy process. For any $x>0$ and any $0\leq t<\tau_0^{-}$, we have $\eta(t)<\infty$ $\mathbb{P}_x$-a.s. The process $(X_t,t\geq 0)$ is well-defined, strong Markov and with c\`adl\`ag paths. Its semigroup $(P_t,t\geq 0)$ satisfies 
		for any function $f$ bounded and continuous on $[0,\infty)$,
		\begin{itemize}
			\item[(i)]  $P_tf$ is continuous on $[0,\infty)$,
			\item[(ii)]
			$P_tf(x)\underset{t\rightarrow 0+}{\longrightarrow} f(x)$ for any $x\in [0,\infty)$.\end{itemize}
	\end{prop}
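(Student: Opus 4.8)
The plan is to establish each assertion in turn, exploiting the fact that $X$ is built pathwise from $Z$ by a continuous (on $[0,\tau_0^-)$) strictly increasing additive functional $\eta$, so that almost every statement about $X$ transfers from a corresponding statement about $Z$. For the finiteness of $\eta(t)$: on the event $\{t<\tau_0^-\}$ the path $(Z_r,r\in[0,t])$ is càdlàg and stays in a compact subset of $(0,\infty)$ (it is bounded away from $0$ because $t<\tau_0^-$ and the spectrally positive path cannot jump below its running infimum), hence $R\circ Z$ is bounded below on $[0,t]$ by continuity of $R$ and $\eta(t)=\int_0^t \ddr r/R(Z_r)<\infty$ $\mathbb{P}_x$-a.s. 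This already appears in the excerpt's discussion, so I would just record it. That $\eta$ is continuous and strictly increasing on $[0,\tau_0^-)$ then makes $\eta^{-1}$ well defined, and $X_t=Z_{\eta^{-1}(t)\wedge\tau_0^-}$ is càdlàg as the composition of a càdlàg path with a continuous time change; strong Markov follows from the strong Markov property of $Z$ together with the fact that $\eta$ is an additive functional adapted to the (completed) filtration of $Z$ — this is the standard Volkonskii-type time-change theorem, see e.g. \cite[Chapter 12]{MR1406564} or the cited works \cite{2016arXiv160909593L,2018arXiv180905759L}, and I would invoke it rather than reprove it.

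The substantive part is the Feller property (i)–(ii). For the continuity (i), fix $t>0$ and $f$ bounded continuous on $[0,\infty)$; I would prove $x\mapsto P_tf(x)=\mathbb{E}_x[f(X_t)]$ is continuous by a coupling/continuous-dependence argument. Run $Z$ from $x$ and from $x'$ driven by the same Lévy process (i.e. $Z^{x'}_r=Z^x_r+(x'-x)$ up to the relevant passage times); as $x'\to x$, the paths converge uniformly on compacts, the first passage times $\tau_0^-$ converge a.s. (using that $0$ is irregular-from-above or regular as appropriate, and that $Z$ does not stick at $0$), hence the additive functionals $\eta^{x'}(\cdot)\to\eta^x(\cdot)$ locally uniformly on $[0,\tau_0^-)$, the inverses converge, and $X^{x'}_t\to X^x_t$ a.s. (at continuity points of the limiting path, which occur with probability one for fixed $t$ since $X^x$ has only countably many jumps). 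Bounded convergence then gives $P_tf(x')\to P_tf(x)$. One must handle the boundary point $x=0$ (where $X$ is absorbed, $X_t\equiv 0$, so $P_tf(0)=f(0)$) and the passage across $0$ carefully; I would also need to check there is no mass escaping to $\infty$, which is exactly the non-explosion fact $\mathbb{P}_x(\sup_t X_t<\infty)=1$ noted in the excerpt in the (sub)critical case, and more generally that $\zeta>0$ a.s. For (ii), pointwise right-continuity at $t=0$: since $\eta$ is continuous with $\eta(0)=0$ and strictly increasing near $0$, $\eta^{-1}(t)\to 0$ as $t\to0+$, so $X_t=Z_{\eta^{-1}(t)\wedge\tau_0^-}\to Z_0=x$ a.s. by right-continuity of $Z$ (for $x>0$; for $x=0$ it is immediate), and bounded convergence yields $P_tf(x)\to f(x)$.

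The main obstacle I anticipate is the uniform control needed for continuity (i) near the absorbing boundary and, relatedly, the behaviour of $\tau_0^-$ and of $\eta$ as one lets the initial point vary — in particular ruling out that a small perturbation of the starting point produces a macroscopic change in $\eta^{-1}(t)$ because the path of $Z$ spends a long (weighted) time near $0$ where $1/R$ may be large if $R(0)=0$ is \emph{not} assumed but $R$ is merely continuous and positive on $(0,\infty)$ (note $R$ is only assumed strictly positive on the open half-line, so $R$ could vanish at $0$, making $1/R$ blow up near $0$ and potentially $\zeta<\infty$). The clean way around this is to first reduce to compact subintervals of $(0,\infty)$ by a stopping-time argument — stop $Z$ upon exiting $[\epsilon,1/\epsilon]$, on which $R$ is bounded away from $0$ and $\infty$ so the time change is bi-Lipschitz and the continuous dependence is routine — and then let $\epsilon\to0$, using the strong Markov property and the finiteness of the relevant exit times. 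I would structure the proof of (i) exactly that way.
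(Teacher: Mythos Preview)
Your approach is correct and shares the paper's core idea: couple the L\'evy processes via $Z^{x'}=Z^x+(x'-x)$, establish continuous dependence of the stopped path and of the time change on the initial point, and deduce $X^{x'}_t\to X^x_t$ a.s.\ at continuity times of the limit, then apply bounded convergence. The execution differs. The paper avoids your $\epsilon$-localization near the boundary by working from the outset in a Skorokhod-type metric $d_\infty$ on c\`adl\`ag paths valued in the \emph{compactified} state space $[0,\infty]$ (with $0$ and $\infty$ as traps, via $\rho(x,y)=|e^{-x}-e^{-y}|$), and then invokes two off-the-shelf results: Caballero--Lambert--Uribe Bravo \cite{MR2592395} for continuity of the Lamperti-type time-change map with respect to $d_\infty$, and Li \cite{2016arXiv160909593L} for the almost-sure convergence $d_\infty(Z^{x}_{\cdot\wedge\tau_0^x},Z^{y}_{\cdot\wedge\tau_0^y})\to 0$ as $x\to y$. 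This packaging absorbs into the cited lemmas and the choice of metric precisely the boundary issues you flag (the possible vanishing of $R$ at $0$, the behaviour of $\tau_0^-$ under perturbation of the starting point), so no separate stopping-and-patching argument is needed. Your $\epsilon$-stopping route is a legitimate, more self-contained alternative, at the price of having to assemble the pieces by hand as $\epsilon\downarrow 0$; the paper's route is shorter but relies on more external machinery.
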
 The proof of Proposition \ref{Fellerproperty} is postponed to the Appendix.
	As our primary aim is to study the boundary $\infty$ of the nonlinear CSBP $X$, we  precise now  the definition  of entrance boundary that we adopt. We refer for this definition to Kallenberg \cite[Chapter 23]{Kallenberg}. Set $T_b:=\inf\{t\geq 0: X_t< b\}$ for all $b\geq 0$ with the convention $\inf\emptyset:=\infty$. The random variable $T_b$ is the first passage time of the nonlinear CSBP $X$ below the level $b$ and is a stopping time in the natural filtration of $X$.
	\begin{Definition}
		The boundary $\infty$ is said to be an \textit{instantaneous entrance boundary} for the process $(X_t,t\geq 0)$ if the process does not explode and
		\begin{equation}\label{0.5'} \forall t>0, \underset{b\rightarrow \infty}{\lim}\underset{x\rightarrow \infty}{\liminf}\ \mathbb{P}_x(T_b\leq t)=1.
		\end{equation}
		
	\end{Definition}
	
	The following lemma provides equivalent conditions for $\infty$ to be an entrance for processes with no negative jumps.  We refer to \cite[Lemma 1.2]{noteentrance} for its proof.
	\begin{lemma}\label{equivcdi} Consider a strong Markov process $(X_t,t\geq 0)$ with no negative jumps. The following statements are equivalent:
		\begin{itemize}
			\item[(a)] Condition \eqref{0.5'} holds; 
			\item[(b)] For large enough $b$, $\sup_{x\geq b}\mathbb{E}_x [T_b]<\infty$;
			\item[(c)] $\underset{b\rightarrow \infty}{\lim}\ \underset{x\rightarrow \infty}{ \lim }\mathbb{E}_x [T_b]=0$.
		\end{itemize}
	\end{lemma}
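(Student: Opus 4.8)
The plan is to prove the cyclic chain of implications $(a)\Rightarrow(b)\Rightarrow(c)\Rightarrow(a)$. The single structural fact I would use throughout is that, since $X$ is c\`adl\`ag and has no negative jumps, it cannot overshoot a level downwards: if $X_0=x>c$ then $X_{T_c}=c$ on $\{T_c<\infty\}$ (the path is $\geq c$ before $T_c$, the left limit there is $\geq c$, no downward jump is allowed, and right–continuity forces equality). Combined with the strong Markov property, restarting $X$ at the time $T_{b'}$ (at which $X_{T_{b'}}=b'$), this yields the skip-free identity
\[
\mbb{E}_x[T_b]=\mbb{E}_x[T_{b'}]+\mbb{E}_{b'}[T_b]\qquad (b\le b'\le x,\ \mbb{P}_x(T_{b'}<\infty)=1),
\]
whence $x\mapsto\mbb{E}_x[T_b]$ is non-decreasing on $[b,\infty)$ (with value $+\infty$ allowed). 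I also use the trivial monotonicity $T_{b'}\le T_b$ whenever $b'\ge b$.

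\textbf{$(a)\Rightarrow(b)$.} Condition \eqref{0.5'} controls $\mbb{P}_x(T_b\le t)$ only in the regime ``$b$ large, \emph{then} $x$ large'', and the first task is to upgrade this to a lower bound uniform over all $x\ge b$. Fix $t=1$. By \eqref{0.5'} choose $b_1$ with $\liminf_{x\to\infty}\mbb{P}_x(T_{b_1}\le 1)>1/2$, and then $x_1>b_1$ with $\mbb{P}_x(T_{b_1}\le 1)>1/2$ for all $x\ge x_1$. Set $b:=x_1$; since $b>b_1$ we have $T_b\le T_{b_1}$, hence $\mbb{P}_x(T_b>1)\le 1/2$ for every $x\ge b$. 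Now iterate: on $\{T_b>n\}$ one has $X_n\ge b$, so by the Markov property $\mbb{P}_x(T_b>n+1\mid\mcr{F}_n)=\mbb{P}_{X_n}(T_b>1)\le 1/2$ on that event, and induction gives $\mbb{P}_x(T_b>n)\le 2^{-n}$ for all $x\ge b$ and $n\ge 0$. Therefore $\mbb{E}_x[T_b]\le\sum_{n\ge 0}\mbb{P}_x(T_b>n)\le 2$ uniformly in $x\ge b$, which is $(b)$.

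\textbf{$(b)\Rightarrow(c)$.} Fix $b_0$ with $M:=\sup_{x\ge b_0}\mbb{E}_x[T_{b_0}]<\infty$; then $T_b<\infty$ a.s. for all $b\ge b_0$ (as $T_b\le T_{b_0}$), so the skip-free identity applies and reads $\mbb{E}_x[T_b]=\mbb{E}_x[T_{b_0}]-\mbb{E}_b[T_{b_0}]$ for $b_0<b\le x$. By monotonicity in the starting point and the bound $M$, the limits $\ell(b):=\lim_{x\to\infty}\mbb{E}_x[T_b]\in[0,M]$ exist, and $\lim_{b\to\infty}\mbb{E}_b[T_{b_0}]=\ell(b_0)$ since both are the limit of the non-decreasing function $y\mapsto\mbb{E}_y[T_{b_0}]$ as $y\to\infty$. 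Letting first $x\to\infty$ and then $b\to\infty$ in the identity gives $\lim_{b\to\infty}\ell(b)=\ell(b_0)-\ell(b_0)=0$, which is $(c)$.

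\textbf{$(c)\Rightarrow(a)$.} By Markov's inequality $\mbb{P}_x(T_b\le t)\ge 1-\mbb{E}_x[T_b]/t$, so $\liminf_{x\to\infty}\mbb{P}_x(T_b\le t)\ge 1-\ell(b)/t$, and $(c)$ lets $b\to\infty$ to push this to $1$. The main obstacle is the implication $(a)\Rightarrow(b)$: passing from the double-limit statement \eqref{0.5'} to the uniformity over $[b,\infty)$ demanded by $(b)$ is exactly what the two-level device accomplishes — choosing the entrance threshold $b$ to be the cut-off $x_1$ produced by a strictly lower level $b_1$, so that skip-freeness ($T_b\le T_{b_1}$) transfers the bound to the whole of $[b,\infty)$ — after which the geometric iteration is routine. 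One should check that the c\`adl\`ag/no-negative-jumps argument giving $X_{T_c}=c$ is genuinely needed only for the skip-free identity (hence for the monotonicity underlying $(b)\Rightarrow(c)$), while $(a)\Rightarrow(b)$ and $(c)\Rightarrow(a)$ use only the Markov property and Markov's inequality.
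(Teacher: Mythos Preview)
The paper does not prove this lemma; it simply cites Lemma~1.2 of \cite{noteentrance}. Your argument is correct and self-contained, and the cyclic scheme $(a)\Rightarrow(b)\Rightarrow(c)\Rightarrow(a)$ with the two-level device for $(a)\Rightarrow(b)$ and the skip-free decomposition for $(b)\Rightarrow(c)$ is precisely the natural approach (and is, in fact, the argument given in the cited reference). Two minor comments: in $(a)\Rightarrow(b)$ you tacitly use that on $\{T_b>n\}$ the process has not exploded before time $n$, so that $X_n\in[b,\infty)$ and $\mbb{P}_{X_n}(T_b>1)\le 1/2$ applies; this is harmless here since the lemma is invoked in the paper only for non-explosive processes, but is worth flagging. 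Second, the rearranged identity $\mbb{E}_x[T_b]=\mbb{E}_x[T_{b_0}]-\mbb{E}_b[T_{b_0}]$ in $(b)\Rightarrow(c)$ requires $\mbb{E}_b[T_{b_0}]<\infty$, which is exactly what $(b)$ supplies, so there is no gap.
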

	Condition \eqref{0.5'}, with Proposition \ref{Fellerproperty} (Feller property), ensures the existence of a c\`adl\`ag strong Markov process $X$ on $[0,\infty]$, which starts from infinity and leaves it instantaneously. Namely,  $\mathbb{P}_\infty(X_0=\infty)=1$ and $\mathbb{P}_\infty(X_t<\infty \text{ for any } t>0)=1$. The process has, moreover, the same law as the nonlinear CSBP defined in \eqref{timechangedef} under $\mathbb{P}_x$ for any $x\in (0,\infty)$.   We refer the reader to \cite[Theorem 2.2]{noteentrance}.
	The next proposition, also established in \cite{noteentrance}, is crucial in the study of the speed of coming down from infinity.
	\begin{prop}\label{momentunderPinfty} Let $h$ be a nonnegative continuous and increasing function on $[0,\infty)$. Suppose that \eqref{0.5'} holds. Then
		\begin{itemize}
			\item[(a)]
			for any $\theta>0$, there exists $b_\theta>0$, such that for all $b\geq b_\theta$,
			$\mathbb{E}_\infty(e^{\theta T_b})<\infty;$
			\item[(b)] for any $b>0$, if $T_b<\infty$, $\mathbb{P}_\infty$-almost-surely and $\mathbb{E}_\infty(h(T_b))<\infty$, then \\
			$ \mathbb{E}_x (h(T_b))\underset{x\rightarrow \infty}{\longrightarrow} \mathbb{E}_\infty (h(T_b)).$
		\end{itemize}
		In particular, we see that, under $\mathbb{P}_\infty$, one can find a large enough $b$ such that $T_b$ has  moments of all orders.
	\end{prop}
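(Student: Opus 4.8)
The plan is to obtain both assertions from the strong Markov property of $X$ (Proposition~\ref{Fellerproperty}), the absence of negative jumps --- so that $X$ creeps downwards and $X_{T_a}=a$ on $\{T_a<\infty\}$ --- Lemma~\ref{equivcdi}, and the construction of $\mathbb{P}_\infty$ recalled above; no pathwise comparison or SDE representation is needed.

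\emph{Part (a).} I would first record the exact identity obtained by conditioning at the stopping time $T_a$ and using $X_{T_a}=a$: for $0<b\le a\le a'$,
\[
\mathbb{E}_{a'}[T_b]=\mathbb{E}_{a'}[T_a]+\mathbb{E}_{a}[T_b],
\]
both sides being simultaneously finite or infinite. Taking $a=x$, $a'=x'$ with $b\le x\le x'$ gives (i) $x\mapsto\mathbb{E}_x[T_b]$ is nondecreasing on $[b,\infty)$, hence $m(b):=\sup_{x\ge b}\mathbb{E}_x[T_b]=\lim_{x\to\infty}\mathbb{E}_x[T_b]$; and since $T_{b'}\le T_b$ when $b\le b'$, (ii) $b\mapsto m(b)$ is nonincreasing. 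By Lemma~\ref{equivcdi}, $m(b)<\infty$ for all large $b$ and $\lim_{b\to\infty}m(b)=\lim_{b\to\infty}\lim_{x\to\infty}\mathbb{E}_x[T_b]=0$. Given $\theta>0$ I would then pick $b=b_\theta$ with $m:=m(b)<(e\theta)^{-1}$, set $t_0:=e\,m$ and $q:=e^{-1}$, so that Markov's inequality and (i) give $\sup_{x\ge b}\mathbb{P}_x(T_b>t_0)\le m/t_0=q$. Since $X_{nt_0}\ge b$ on $\{T_b>nt_0\}$, iterating the Markov property at the times $nt_0$ yields $\sup_{x\ge b}\mathbb{P}_x(T_b>nt_0)\le q^{\,n}$ for all $n\ge0$, whence
\[
C_\theta:=\sup_{x\ge b}\mathbb{E}_x\big[e^{\theta T_b}\big]\le\sum_{n\ge0}e^{\theta(n+1)t_0}q^{\,n}=\frac{e^{\theta t_0}}{1-q\,e^{\theta t_0}}<\infty,
\]
because $q\,e^{\theta t_0}=e^{-1+e\theta m}<1$ by the choice of $b_\theta$. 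To pass to $\mathbb{P}_\infty$, I would condition at an arbitrary fixed time $s>0$: on $\{T_b\le s\}$ one has $e^{\theta T_b}\le e^{\theta s}$, on $\{T_b>s\}$ one has $X_s\ge b$, and $X_s<\infty$ $\mathbb{P}_\infty$-a.s., so the Markov property gives
\[
\mathbb{E}_\infty\big[e^{\theta T_b}\big]\le e^{\theta s}\,\mathbb{P}_\infty(T_b\le s)+e^{\theta s}\,\mathbb{E}_\infty\Big[\mathbbm{1}_{\{T_b>s\}}\,\mathbb{E}_{X_s}\big[e^{\theta T_b}\big]\Big]\le e^{\theta s}\,(1+C_\theta)<\infty .
\]

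\emph{Part (b).} Fix $b>0$ with $T_b<\infty$ $\mathbb{P}_\infty$-a.s. and $\mathbb{E}_\infty[h(T_b)]<\infty$. For $x>b$, since $X$ has no negative jumps one has $T_x\le T_b<\infty$ $\mathbb{P}_\infty$-a.s. and $X_{T_x}=x$, so the strong Markov property at $T_x$ under $\mathbb{P}_\infty$ yields the skip-free decomposition (realized jointly on the probability space of $\mathbb{P}_\infty$)
\[
T_b\ \textrm{under}\ \mathbb{P}_\infty\ \overset{\mathcal{L}}{=}\ A_x+B_x,\qquad A_x:=T_x\ \textrm{under}\ \mathbb{P}_\infty,\quad B_x\overset{\mathcal{L}}{=}T_b\ \textrm{under}\ \mathbb{P}_x,
\]
with $A_x$ and $B_x$ independent; in particular $T_b<\infty$ $\mathbb{P}_x$-a.s. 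As $x\to\infty$, $\{T_x>t\}=\{X_u\ge x\ \forall\,u\le t\}$ decreases to the $\mathbb{P}_\infty$-null set $\{X_u=\infty\ \forall\,u\le t\}$, so $A_x\downarrow0$ a.s. Because the law of $A_x+B_x$ is the fixed, tight law of $T_b$ under $\mathbb{P}_\infty$, the family $\{B_x\}_{x>b}$ is tight; applying Slutsky's lemma along any subsequence shows that every weak subsequential limit of $B_x$ is distributed as $T_b$ under $\mathbb{P}_\infty$, so $B_x\Rightarrow T_b$ under $\mathbb{P}_\infty$ and, $h$ being continuous, $h(B_x)\Rightarrow h(T_b)$. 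Since $h$ is increasing and $A_x\ge0$, $0\le h(B_x)\le h(A_x+B_x)=:Y_x$ with $Y_x\overset{\mathcal{L}}{=}h(T_b)$ under $\mathbb{P}_\infty$ a fixed integrable variable, so for $K>0$,
\[
\sup_{x>b}\mathbb{E}\big[h(B_x)\mathbbm{1}_{\{h(B_x)>K\}}\big]\le\sup_{x>b}\mathbb{E}\big[Y_x\mathbbm{1}_{\{Y_x>K\}}\big]=\mathbb{E}_\infty\big[h(T_b)\mathbbm{1}_{\{h(T_b)>K\}}\big],
\]
which tends to $0$ as $K\to\infty$ since $h(T_b)$ is $\mathbb{P}_\infty$-integrable; hence $\{h(B_x)\}$ is uniformly integrable. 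Uniform integrability together with $h(B_x)\Rightarrow h(T_b)$ then gives $\mathbb{E}_x[h(T_b)]=\mathbb{E}[h(B_x)]\to\mathbb{E}_\infty[h(T_b)]$. The final assertion follows from (a) applied with, say, $\theta=1$: for $b\ge b_1$ one has $\mathbb{E}_\infty[e^{T_b}]<\infty$, hence $T_b$ has finite moments of every order under $\mathbb{P}_\infty$.

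I expect the main obstacle to be the proof that $m(b)\to0$ as $b\to\infty$ in part (a) --- this is precisely what lets the ``relaxation time'' $t_0=e\,m(b)$ be taken small enough that $q\,e^{\theta t_0}<1$ --- and everything there hinges on the two monotonicities (i)--(ii) extracted from the skip-free strong-Markov identity together with Lemma~\ref{equivcdi}. Once (a) is established the transfer from finite initial states to $\mathbb{P}_\infty$ is routine (one conditioning at a fixed time), and part (b) reduces to the skip-free decomposition $T_b\overset{\mathcal{L}}{=}A_x+B_x$ with $A_x\to0$ followed by the uniform-integrability argument above.
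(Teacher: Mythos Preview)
The paper does not prove this proposition in the text; it refers to the companion note \cite{noteentrance}. Your argument is correct and entirely self-contained, and is very likely close in spirit to what appears there.

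For part (a), your route is the natural one: the skip-free identity $\mathbb{E}_{a'}[T_b]=\mathbb{E}_{a'}[T_a]+\mathbb{E}_a[T_b]$ yields the two monotonicities, Lemma~\ref{equivcdi}(c) gives $m(b)\to 0$, and the geometric tail bound obtained by iterating the Markov property at times $nt_0$ produces the uniform exponential moment $C_\theta=\sup_{x\ge b}\mathbb{E}_x[e^{\theta T_b}]<\infty$. Your transfer to $\mathbb{P}_\infty$ by conditioning at a fixed time $s>0$ is legitimate because the extended process on $[0,\infty]$ is Markov and leaves $\infty$ instantaneously, so $X_s\in[b,\infty)$ on $\{T_b>s\}$.

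For part (b), the decomposition $T_b\overset{\mathcal{L}}{=}A_x+B_x$ under $\mathbb{P}_\infty$, with $A_x=T_x\downarrow 0$ a.s.\ and $B_x$ independent with law $\mathbb{P}_x(T_b\in\cdot)$, is precisely what downward skip-freeness provides. Your uniform-integrability step via the pointwise domination $0\le h(B_x)\le h(A_x+B_x)$, the right-hand side having the fixed integrable law of $h(T_b)$ under $\mathbb{P}_\infty$, is clean and avoids any additional moment assumptions. The Slutsky/tightness argument correctly identifies the weak limit of $B_x$, and uniform integrability upgrades this to convergence of $\mathbb{E}[h(B_x)]$.
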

	
	
	%
	We close this section with the following simple observation.
	\begin{lemma}\label{supercritical} In the supercritical case  the boundary $\infty$ is not an entrance boundary. 
	\end{lemma}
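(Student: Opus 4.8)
The plan is to show that in the supercritical case, $\infty$ fails to be an entrance boundary by contradicting the equivalent characterization (b) of Lemma \ref{equivcdi}, namely that $\sup_{x\geq b}\mathbb{E}_x[T_b]<\infty$ for large $b$. In the supercritical case $\gamma=\Psi'(0+)<0$, so the underlying spectrally positive L\'evy process $Z$ has positive mean drift and drifts to $+\infty$: $Z_t\to+\infty$ a.s. as $t\to\infty$, and moreover $\mathbb{P}_x(\tau_0^-=\infty)>0$ for every $x>0$ (and this probability tends to $1$ as $x\to\infty$). On the event $\{\tau_0^-=\infty\}$ we have $\zeta=\eta(\tau_0^-)=\eta(\infty)$, and since $Z_t\to\infty$, the lifetime $\zeta$ may be finite or infinite depending on $R$, but in either case $X_t=Z_{\eta^{-1}(t)}$ and along this event $X$ never goes below $0$; more to the point, I want to argue $T_b=\infty$ with positive probability.

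The key step is the following: fix $b>0$ and take $x>b$. Under $\mathbb{P}_x$, consider the event $A_x:=\{Z_t>b \text{ for all } t\geq 0\}=\{\inf_{t\geq 0}Z_t>b\}$. Because $Z$ drifts to $+\infty$, for each $x$ there is positive probability that $Z$ started from $x$ stays strictly above $b$ forever, and by the strong law / fluctuation theory $\mathbb{P}_x(A_x)\to 1$ as $x\to\infty$. On $A_x$ one has $\tau_0^-=\infty$, the time-change $\eta^{-1}(t)$ is well-defined and finite for every $t<\zeta$, and $X_t=Z_{\eta^{-1}(t)}>b$ for all $t\in[0,\zeta)$, while for $t\geq\zeta$ the process is at the cemetery or has been sent to $0$ only if $\zeta<\infty$ — here one must be slightly careful. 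If $\zeta=\infty$ on $A_x$ then clearly $T_b=\infty$ on $A_x$. If $\zeta<\infty$, then $X$ reaches $0$ (hence goes below $b$) at time $\zeta$, so $T_b\leq\zeta$; this case can genuinely occur, so instead of working with $A_x$ I will use the sharper observation that it suffices to produce a single $b$ and show $\inf_{x\geq b}\mathbb{E}_x[T_b]=\infty$ fails, i.e. that $\mathbb{E}_x[T_b]$ does not stay bounded. The cleanest route: show $\mathbb{P}_x(T_b>t)\geq \mathbb{P}_x(\tau^-_b>\eta^{-1}(t))\geq \mathbb{P}_x(\inf_{s\geq 0}Z_s\geq b)$, and since the last quantity converges to $1$ as $x\to\infty$ for fixed $b$, we get $\liminf_{x\to\infty}\mathbb{P}_x(T_b>t)=1$ for every $t$, directly contradicting Condition \eqref{0.5'} — here $\tau_b^-$ denotes the first passage of $Z$ below $b$, and on $\{\inf Z\geq b\}$ we have $\tau_b^-=\infty$ so $\eta^{-1}(t)<\tau_b^-$ automatically whenever $\eta^{-1}(t)<\infty$, and if $\zeta\leq t$ the process on $[\zeta,t]$ sits at $0$; to avoid that last subtlety one notes $\{\inf_{s\geq 0}Z_s\geq b\}\subseteq\{X_s\geq b\ \forall s<\zeta\}$ and picks $b$ together with the fact that on this event $Z$ escaping to $\infty$ forces, when $R$ does not decay too fast, $\zeta=\infty$; but in full generality it is simpler to invoke $\liminf_{x\to\infty}\mathbb P_x(\tau_0^-=\infty)=1$ and note that on $\{\tau_0^-=\infty\}\cap\{\inf_{s\ge 0}Z_s>b\}$, whatever the value of $\zeta$, the running infimum of $X$ before $\zeta$ stays above $b$, and at $\zeta$ (if finite) we may simply replace $b$ by a smaller level — concretely, for $b'<b$, $\mathbb P_x(T_{b'}>t)\ge \mathbb P_x(\inf_{s\ge 0}Z_s>b)\to 1$.

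Thus the main obstacle — and the only delicate point — is handling the lifetime $\zeta$: when $\zeta<\infty$ the time-changed process is killed (or sent to $0$) and one could worry that $T_b<\infty$ trivially. I expect to resolve this by the remark that Condition \eqref{0.5'} in the Definition already requires the process "does not explode", and that in the relevant supercritical regime either $\zeta=\infty$ on the no-passage event (when $\int^\infty \frac{\ddr x}{R(x)}=\infty$), in which case the argument above is immediate, or $\zeta<\infty$, in which case $X$ converges to $\infty$ in the sense that it is killed "at $\infty$" — but then $X$ is explosive-like and again \eqref{0.5'} fails because $\mathbb{P}_x(T_b\le t)$ is bounded away from $1$ uniformly. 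Either way, $\liminf_{x\to\infty}\mathbb{P}_x(T_b\le t)<1$ for all $t$, so \eqref{0.5'} cannot hold, and by Lemma \ref{equivcdi} (equivalence of (a) with entrance), $\infty$ is not an entrance boundary. I would write the final proof by first recalling $\gamma<0\Rightarrow Z_t\to+\infty$ a.s. and $\mathbb{P}_x(\inf_{s\ge0}Z_s>b)\xrightarrow[x\to\infty]{}1$ (standard fluctuation theory for spectrally positive L\'evy processes with positive drift), then pushing this bound through the time change exactly as above, and finally citing Lemma \ref{equivcdi}.
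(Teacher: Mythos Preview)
Your core idea is the same as the paper's: in the supercritical case $\gamma<0$ the L\'evy process $Z$ drifts to $+\infty$, so for any $b<x$ the event $A_x=\{\inf_{s\ge 0}Z_s>b\}$ has positive probability, and on this event the time-changed process $X$ never goes below $b$, whence $T_b=\infty$ with positive probability and $\mathbb E_x[T_b]=\infty$, contradicting condition (b) of Lemma \ref{equivcdi}. The paper's proof is exactly this, in three lines.

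Where your write-up goes astray is the treatment of the lifetime $\zeta$ on $A_x$. You assert ``If $\zeta<\infty$, then $X$ reaches $0$ (hence goes below $b$) at time $\zeta$'' --- this is false on $A_x$. On $A_x$ one has $\tau_0^-=\infty$, so $\zeta=\eta(\infty)=\int_0^\infty \frac{\ddr s}{R(Z_s)}$; if this integral is finite it means $X$ has \emph{exploded to $+\infty$}, not that it has reached $0$. (The process is sent to $0$ at $\zeta$ only on $\{\tau_0^-<\infty\}$, which is disjoint from $A_x$.) Consequently, on $A_x$ one has $X_t>b$ for all $t<\zeta$, and if $\zeta<\infty$ then $X_{\zeta-}=+\infty$; in either sub-case $T_b=\infty$. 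There is no need to split into cases, to pass to a smaller level $b'$, or to invoke the non-explosion clause of the Definition separately. Once you remove this confusion, your argument collapses to the paper's: $\mathbb P_x(T_b=\infty)\ge \mathbb P_x(A_x)>0$, so $\mathbb E_x[T_b]=\infty$ and (b) of Lemma \ref{equivcdi} fails.
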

	\begin{proof}
		Assume $\Psi'(0+)=\gamma<0$. Since the L\'evy process $(Z_t,t\geq 0)$ drifts towards $+\infty$,  under $\mathbb{P}_x$, $(Z_t,t\geq 0)$ stays above any level $b<x$ with positive probability. On this event, the time-changed process $(X_t,t\geq 0)$ also stays above $b$. This entails $\mathbb{E}_x(T_b)=\infty$ and  condition \textrm{(b)} in Lemma \ref{equivcdi} is not fulfilled.
	\end{proof}
	
	\subsection{Scale function and weighted occupation times}\label{scalefunction}
	Notice that the dual process $\hat{Z}:=-Z$ is a spectrally negative L\'{e}vy process with Laplace exponent $\Psi$, i.e for any $\lambda\geq 0$,\begin{equation} \label{LK} \Psi(\lambda):=\frac{1}{t}\log \mathbb{E}_0[e^{\lambda \hat{Z}_t}]=\gamma \lambda+\frac{\sigma^2}{2}\lambda^2+\int_{0}^{\infty}\left(e^{-\lambda z}-1+\lambda z\right)\pi(\ddr z)
	\end{equation}
	where $\gamma $ is the drift, $\pi$ is the L\'evy measure satisfying $\int^\infty_0z\wedge z^2\pi(\ddr z)<\infty$ and $\sigma^2/2$ is the Brownian coefficient.
	
	As fluctuations of the L\'evy process $Z$ will play an important role, we recall the definition of the scale function $W$ and some of its basic properties. Some classical results on exit problems for spectrally positive L\'evy processes are summarized in the following lemma. For any $y$, set $\tau^+_y:=\inf\{t\ge0: Z_t> y\}$ and $\tau^-_y:=\inf\{t\ge0: Z_t< y\}.$
	\begin{lemma}\label{l1} Suppose that $\gamma\geq 0$. There exists a strictly increasing continuous function $W$, called scale function, such that $W(x)=0$ for any $x<0$ and for any $q>0$
		\begin{equation}\label{defscale}
		\int_{0}^{\infty}e^{-qy}W(y)\ddr y=\frac{1}{\Psi(q)}.
		\end{equation}
		For any $x\in \mathbb{R}$, $\frac{W(y)}{W(x+y)}\underset{y\rightarrow \infty}{\longrightarrow} 1$, $W(y)\underset{y\rightarrow \infty}{\longrightarrow} W(\infty):=\frac{1}{\gamma}\in ]0,\infty]$, $W(y)=o(e^{py})$ when $y\rightarrow \infty$, for any $p>0$. Moreover, for any $x,y\geq 0$
		\begin{equation*}\label{scaleidentities}\mathbb{P}_0(\tau_x^{+}< \infty)=\frac{W(\infty)-W(x)}{W(\infty)}\text{ and } \mathbb{P}_x(\tau_0^{-}\leq \tau^{+}_{x+y})=\mathbb{P}_0(\tau_{-x}^{-}\leq \tau^{+}_{y})=\frac{W(y)}{W(x+y)}
		\end{equation*}
		and there exist $c_1,c_2>0$ such that for any  $x\ge0$
		\begin{equation}\label{boundW}
		c_1\frac{1}{x\Psi\left(1/x\right)} \leq W(x)\leq c_2\frac{1}{x\Psi\left(1/x\right)}.
		\end{equation}
	\end{lemma}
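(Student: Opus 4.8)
The plan is to derive the whole statement by transferring everything to the spectrally negative dual $\hat Z:=-Z$, whose Laplace exponent is $\Psi$ as in \eqref{LK}. Since $\Psi'(0+)=\gamma\ge 0$, the right inverse $\Phi$ of $\Psi$ satisfies $\Phi(0)=0$, so the classical scale-function theorem for spectrally negative L\'evy processes (see for instance \cite[Chapter 8]{MR1406564}) applies for every $q>0$ and yields a continuous, strictly increasing function $W$ on $\mbb R$ with $W\equiv0$ on $(-\infty,0)$ satisfying \eqref{defscale}. The four remaining assertions would then be obtained as follows.

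\emph{Exit identities.} Write $\hat Z=-Z$ and use the two-sided exit law $\mbb P_z(\hat\tau_a^+<\hat\tau_0^-)=W(z)/W(a)$, valid for $0\le z\le a$, where $\hat\tau_a^+$ and $\hat\tau_0^-$ denote the first passage times of $\hat Z$ above $a$ and below $0$. Starting $Z$ from $x>0$, the event $\{\tau_0^-\le\tau_{x+y}^+\}$ is, for the spectrally negative process $y+(x-Z)$ started at $y\in[0,x+y]$, the event that $[0,x+y]$ is exited through its top, of probability $W(y)/W(x+y)$; carrying out the same translation from $Z_0=0$ identifies $\mbb P_0(\tau_{-x}^-\le\tau_y^+)$ with the same quantity, which gives the stated equality. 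Letting $y\to\infty$ in the companion identity $\mbb P_0(\tau_x^+<\tau_{-y}^-)=1-W(x)/W(x+y)$ and using $\tau_{-y}^-\uparrow\infty$ $\mbb P_0$-a.s.\ together with monotone convergence gives $\mbb P_0(\tau_x^+<\infty)=\bigl(W(\infty)-W(x)\bigr)/W(\infty)$, read as $1$ when $W(\infty)=\infty$.

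\emph{Behaviour at infinity.} The ratio limit is quickest from the exit law: for $x\ge0$ one has $W(y)/W(x+y)=\mbb P_0^{\hat Z}(\hat\tau_x^+<\hat\tau_{-y}^-)$, and since $\gamma\ge0$ prevents $\hat Z$ from drifting to $-\infty$, as $y\to\infty$ we have $\hat\tau_{-y}^-\uparrow\infty$ while $\limsup_t\hat Z_t=+\infty$ a.s., so this probability increases to $\mbb P_0(\hat\tau_x^+<\infty)=1$; the case $x<0$ is symmetric, and in particular $W(y)\to W(\infty)$. Multiplying \eqref{defscale} by $q$ and letting $q\downarrow0$, the left-hand side $\int_0^\infty e^{-u}W(u/q)\,\ddr u$ increases to $W(\infty)$ by monotone convergence while $q/\Psi(q)\to 1/\Psi'(0+)=1/\gamma$, whence $W(\infty)=1/\gamma$. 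Finally $\int_0^\infty e^{-py}W(y)\,\ddr y=1/\Psi(p)<\infty$ for every $p>0$, so by monotonicity $e^{-p(y+1)}W(y)\le\int_y^{y+1}e^{-ps}W(s)\,\ddr s\to0$, i.e.\ $W(y)=o(e^{py})$.

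\emph{The two-sided bound \eqref{boundW}.} This is the only step requiring a genuine computation, and the main obstacle. The upper bound is immediate: for any $q>0$, monotonicity of $W$ gives $W(x)e^{-qx}/q\le\int_x^\infty e^{-qy}W(y)\,\ddr y\le 1/\Psi(q)$, and $q=1/x$ yields $W(x)\le e/(x\Psi(1/x))$. For the lower bound I would first record the elementary scaling inequality $\Psi(\lambda q)\le\lambda^2\Psi(q)$ for $\lambda\ge1$, which holds term by term in \eqref{LK} thanks to $e^{-\lambda t}-1+\lambda t\le\lambda^2(e^{-t}-1+t)$ for $t\ge0$, $\lambda\ge1$; hence $\Psi(q/u)\ge\Psi(q)/u^2$ for $u\ge1$. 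Splitting \eqref{defscale} at $T/q$, the contribution of $[0,T/q]$ is at most $W(T/q)/q$, while inserting the upper bound on $W$ just proved and then $\Psi(q/u)\ge\Psi(q)/u^2$ bounds the contribution of $[T/q,\infty)$ by $\tfrac{e}{\Psi(q)}\int_T^\infty ue^{-u}\,\ddr u=\tfrac{e(T+1)e^{-T}}{\Psi(q)}$. Fixing a universal $T$ with $e(T+1)e^{-T}\le\tfrac12$ and absorbing gives $W(T/q)\ge q/(2\Psi(q))$, and with $x=T/q$ and $\Psi(T/x)\le T^2\Psi(1/x)$ this becomes $W(x)\ge 1/(2Tx\Psi(1/x))$, which is \eqref{boundW}. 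The delicate point here is precisely the control of the tail $\int_{T/q}^\infty e^{-qy}W(y)\,\ddr y$, for which the scaling inequality for $\Psi$ is the crucial input; everything else amounts to bookkeeping around the classical theory.
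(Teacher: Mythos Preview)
Your argument is correct. All four blocks check out: the exit identities follow from the standard two-sided formula for the spectrally negative dual after the indicated shifts; the ratio limit is cleanly obtained from the probabilistic interpretation $W(y)/W(x+y)=\mbb P_0^{\hat Z}(\hat\tau_x^+<\hat\tau_{-y}^-)$, which works uniformly in the critical and subcritical cases; the identification $W(\infty)=1/\gamma$ via $q\!\downarrow\!0$ in \eqref{defscale} and the sub-exponential growth via the tail of the Laplace integral are routine; and your derivation of \eqref{boundW} is complete---the scaling inequality $\Psi(\lambda q)\le\lambda^2\Psi(q)$ for $\lambda\ge1$ does hold term by term in \eqref{LK} (the Brownian term gives equality, the drift term uses $\gamma\ge0$, and $h(s):=e^{-s}-1+s$ satisfies $h(\lambda t)\le\lambda^2 h(t)$ by the two-step derivative argument you indicate), and then the split of $\int_0^\infty e^{-qy}W(y)\,\ddr y$ at $T/q$ with the choice of a universal $T$ making $e(T+1)e^{-T}\le\tfrac12$ delivers the lower bound with $c_1=1/(2T)$.

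By way of comparison, the paper does not prove this lemma at all: it simply cites Bertoin \cite[Chapter VII]{MR1406564} and Kyprianou \cite[Chapter 8]{MR3155252}, and for \eqref{boundW} specifically points to Propositions III.1 and VII.10 of \cite{MR1406564}. Those references establish \eqref{boundW} through the renewal-function representation of $W$ for the ascending ladder height process combined with a general two-sided estimate relating a subordinator's Laplace exponent to its renewal measure. Your route is different and arguably more transparent for the present purpose: it stays entirely at the level of the Laplace transform \eqref{defscale} and the convexity-type inequality for $\Psi$, avoiding ladder processes altogether. The cost is that you need the elementary but not entirely obvious inequality $h(\lambda t)\le\lambda^2 h(t)$; the gain is a short, self-contained proof of \eqref{boundW} with explicit constants.
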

	We refer to Bertoin \cite[Chapter VII]{MR1406564}  and Kyprianou \cite[Chapter 8]{MR3155252} for proofs. See Propositions III.1 and VII.10 in \cite{MR1406564} for (\ref{boundW}). Scale functions are rarely explicit, and we refer the reader to Kuznetsov et al. \cite{MR3014147}. In the critical stable case, for which $\Psi(\lambda)=\lambda^{\alpha}$ with $\alpha\in (1,2]$, the scale function can be found for instance in \cite[Example 4.17]{MR3014147}, and $W(x)=x^{\alpha-1}/\Gamma(\alpha)$, for $x>0$.

	The following theorem about weighted occupation times  will allow us to study the Laplace transforms of the first entrance times. Its proof follows closely arguments of Li and Palmowski \cite{MR3849809} and is postponed to the appendix.
	
	\begin{theorem}\label{occupation}
		Given a  locally bounded nonnegative function $\omega$ on $(0, \infty)$, let $W^{(\omega)}_n(x)$ satisfy
		\begin{equation}\label{occupation_a}
		W^{(\omega)}_0(x)=1,\quad W^{(\omega)}_{n+1}(x)=\int_{x}^{\infty}W(z-x)\omega(z)W^{(\omega)}_n(z)\ddr z\qquad \text{for} \,\, x\geq 0, n\ge0.
		\end{equation}
		Given $b\geq 0$, if
		$\sum^\infty_{n=0}  W^{(\omega)}_{n}(b)<\infty,$
		then for all $x\geq b$,
		\[ \mbb{E}_x\left[\exp\left(-\int_0^{\tau_b^-} \omega(Z_s)ds\right); \tau^-_b<\infty  \right]=\frac{\sum^\infty_{n=0} W^{(\omega)}_{n}(x)}{\sum^\infty_{n=0} W^{(\omega)}_{n}(b)}.\]
	\end{theorem}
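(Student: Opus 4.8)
\textbf{Proof proposal for Theorem \ref{occupation}.}

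The plan is to establish the identity by a Poissonian-jump decomposition of the path of the spectrally positive L\'evy process $Z$, following the argument of Li and Palmowski \cite{MR3849809}. First I would fix $b\ge 0$ and, for $n\ge 0$, introduce the stopping times obtained by looking at successive ``crossings'' of the level produced by the positive jumps of $Z$: since $Z$ has no negative jumps, between two jumps it moves continuously, and the only way it climbs is through its jumps. More precisely, I would work with the excursion/ladder structure: starting from $x\ge b$, let $\sigma_0=0$ and inductively let $\sigma_{k+1}$ be the first time after $\sigma_k$ that $Z$ makes a ``large upward move'' relative to its current running infimum. The quantity $W_n^{(\omega)}(x)$ is then meant to capture the contribution of paths that make exactly $n$ such upward excursions before the first passage time $\tau_b^-$, each weighted by the Feynman–Kac factor $\exp(-\int \omega(Z_s)\,ds)$ accumulated along that excursion.

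The key analytic input is the two-sided exit identity from Lemma \ref{l1}: for a spectrally positive L\'evy process started at level $z$, the probability of hitting a lower level before an upper level is given by ratios of the scale function $W$, and the kernel $W(z-x)\omega(z)$ in \eqref{occupation_a} arises precisely as the ``resolvent density with a lower absorbing barrier at $x$''. Concretely, I would first prove the one-step relation
\begin{equation*}
\mathbb{E}_x\!\left[\exp\!\left(-\!\int_0^{\tau_b^-}\!\omega(Z_s)\,ds\right);\tau_b^-<\infty\right]
= \sum_{n\ge 0} c_n(x), \qquad
c_0(x)=1,\quad c_{n+1}(x)=\int_x^\infty W(z-x)\,\omega(z)\,c_n(z)\,dz,
\end{equation*}
by expanding the exponential Feynman–Kac functional as a Dyson/Neumann series in $\omega$ and identifying each term with an iterated integral against the killed potential measure, whose density (relative to the barrier at $x$, before killing below $x$) is exactly $W(z-x)$ up to the standard normalisation. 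Summability $\sum_n W_n^{(\omega)}(b)<\infty$ guarantees absolute convergence of this series and justifies interchanging sum and expectation (monotone/dominated convergence, using $\omega\ge 0$ so each term is nonnegative and comparing to the value at $b$ via monotonicity of $x\mapsto W_n^{(\omega)}(x)$-type estimates or a change of variables $z\mapsto z-x$). This already gives the numerator $\sum_n W_n^{(\omega)}(x)$ in the statement.

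For the denominator, I would use the renewal/branching structure at level $b$ itself: applying the strong Markov property at $\tau_b^-$ — or rather, using that the process restricted to staying above $b$ regenerates — one shows that starting from $b$ the functional satisfies a fixed-point equation of the form $V(b)=1+\int_b^\infty W(z-b)\omega(z)V(z)\,dz$, i.e. $V(b)=\sum_n W_n^{(\omega)}(b)$, and dividing $V(x)$ by $V(b)$ yields the claimed ratio (this is the mechanism that removes the ``return to $b$'' contributions and matches the normalisation at $x=b$, where the identity reads $1$). I expect the main obstacle to be the careful bookkeeping in this decomposition: making the excursion stopping times and the killed-potential identification rigorous on the event $\{\tau_b^-<\infty\}$, controlling the behaviour near $+\infty$ (so that the iterated integrals over $(x,\infty)$ are finite, which is where the hypothesis $\sum_n W_n^{(\omega)}(b)<\infty$ and the locally bounded, nonnegative nature of $\omega$ are used), and justifying all the interchanges of limits and expectations — the algebra of the recursion \eqref{occupation_a} is straightforward once the probabilistic identification of $W(z-x)$ as the relevant potential density is in place. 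Since the paper defers the proof to the appendix and explicitly says it ``follows closely arguments of Li and Palmowski \cite{MR3849809}'', I would cite their Lemma/Theorem for the core exit-and-occupation identity and adapt it to the present time-homogeneous, level-$b$ setting.
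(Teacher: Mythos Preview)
Your proposal contains a genuine error. The displayed equation
\[
\mathbb{E}_x\!\left[\exp\!\left(-\!\int_0^{\tau_b^-}\!\omega(Z_s)\,ds\right);\tau_b^-<\infty\right]
= \sum_{n\ge 0} c_n(x)
\]
cannot hold: the left-hand side is at most $1$ (it is the expectation of a quantity in $[0,1]$), while the right-hand side is at least $c_0(x)=1$ and is strictly larger whenever $\omega$ is not identically zero above $x$, since all the $c_n$ are nonnegative. The series $\sum_n W_n^{(\omega)}(x)$ is \emph{not} the Feynman--Kac expectation itself; only the ratio $\sum_n W_n^{(\omega)}(x)\big/\sum_n W_n^{(\omega)}(b)$ is. The ``Dyson/Neumann expansion of the exponential'' you invoke produces alternating-sign terms $(-1)^n/n!$, not the positive iterates $W_n^{(\omega)}$, so the identification you propose does not go through. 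Your subsequent ``normalisation'' paragraph is then circular: you have already (incorrectly) claimed the numerator equals the expectation, so dividing by $V(b)$ cannot be what produces the ratio.

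The paper's actual argument is structurally different and avoids these issues. It passes to the dual spectrally negative process $\hat Z=-Z$, truncates $\omega$ to a compact interval via $\omega_a(y)=\omega(-y)1_{\{y\ge -a\}}$, and applies the finite-interval two-sided exit identity of Li and Palmowski (their Corollary~2.2), which expresses the Feynman--Kac expectation as a ratio $W^{(\omega_a)}(-x,-\gamma)/W^{(\omega_a)}(-b,-\gamma)$ of $\omega$-scale functions solving an integral equation on a bounded interval. One then lets the lower barrier $-\gamma\to-\infty$ (this is where the limit $H^{(\omega_a)}$ and its integral equation come from) and finally removes the truncation $a\to\infty$ by monotone convergence. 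The series representation $\sum_n W_n^{(\omega)}$ arises as the Picard iteration solving the limiting integral equation, not as a direct path-expansion of the exponential. The summability hypothesis at $b$ is what guarantees the monotone limits are finite. Your excursion/jump-counting picture is suggestive but, as written, does not supply a rigorous route to the ratio structure.
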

	
	\section{Main results and strategy of proofs}\label{results}
	From now on, unless explicitly stated, we consider the subcritical and critical nonlinear CSBPs, namely those satisfying $\gamma \geq 0$.  Recall $T_b:=\inf\{t\geq 0: X_t<b\}$ for any $b>0$ and the scale function $W$.
	\subsection{Main results}
	
	Our first result is the following necessary and sufficient condition for coming down from infinity.
	\begin{theorem}\label{main1}  Assume $\gamma\geq 0$. The boundary $\infty$ is an instantaneous entrance boundary for the process $(X_t,t\geq 0)$ if and only if \begin{equation}
		\label{cdicondition} \int^\infty\frac{1}{x\Psi(1/x)R(x)}\ddr x<\infty.
		\end{equation}
		Moreover, if \eqref{cdicondition} holds, then  for all  $b>0$, $\mathbb{E}_\infty(T_b)=m(b)<\infty$,
		where $m$ is defined on $(0,\infty)$ by
		\begin{equation}\label{m}
		m:b\mapsto \int_{b}^{\infty}\frac{W(x-b)}{R(x)}\ddr x.
		\end{equation}
	\end{theorem}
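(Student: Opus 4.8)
The overall plan is to compute $\mathbb{E}_x(T_b)$ explicitly for $x\ge b>0$, let $x\to\infty$, and then translate the resulting integral condition into the entrance property via Lemma~\ref{equivcdi} and into the value of $\mathbb{E}_\infty(T_b)$ via Proposition~\ref{momentunderPinfty}. The starting point is the elementary observation that, by definition of the time change \eqref{timechangedef}, for $b>0$ one has $\eta^{-1}(T_b)=\tau_b^-$, so that
\[
T_b=\eta(\tau_b^-)=\int_0^{\tau_b^-}\frac{\ddr r}{R(Z_r)},\qquad\text{hence}\qquad \mathbb{E}_x(T_b)=\mathbb{E}_x\!\left[\int_0^{\tau_b^-}\frac{\ddr r}{R(Z_r)}\right],
\]
a weighted occupation time of $Z$ killed at its first passage strictly below $b$. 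The key identity, which I claim holds in $[0,\infty]$ for every $x\ge b>0$, is
\[
\mathbb{E}_x(T_b)=\int_b^\infty\frac{W(y-b)-W(y-x)}{R(y)}\,\ddr y .
\]

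I would derive this from Theorem~\ref{occupation} applied with $\omega=q\mathbbm{1}_{(b,N)}/R$ for a cutoff $N>b$ and $q>0$ small. On the compact $[b,N]$ the function $1/R$ is bounded, and since $W$ is non-decreasing one checks inductively that $W^{(\omega)}_n(x)\le (qC_N)^n$ with $C_N=\int_b^N W(y-b)R(y)^{-1}\ddr y<\infty$, so $\sum_n W^{(\omega)}_n(b)<\infty$ for $q<1/C_N$ and Theorem~\ref{occupation} applies. The resulting Laplace transform $\mathbb{E}_x\big[\exp(-q\int_0^{\tau_b^-}\mathbbm{1}_{(b,N)}(Z_s)R(Z_s)^{-1}\ddr s)\big]$ is a ratio of power series in $q$ with positive radius of convergence; extracting the coefficient of $q$ (equivalently, using $(1-e^{-qu})/q\uparrow u$ and monotone convergence) gives $\mathbb{E}_x\big[\int_0^{\tau_b^-}\mathbbm{1}_{(b,N)}(Z_s)R(Z_s)^{-1}\ddr s\big]=\int_b^N\frac{W(y-b)-W(y-x)}{R(y)}\ddr y$, using $W(y-x)=0$ for $y<x$. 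Letting $N\to\infty$, both sides converge monotonically, yielding the displayed identity as an equality in $[0,\infty]$; this is of course also the classical potential density of a spectrally positive L\'evy process killed at $\tau_b^-$, cf. Bertoin \cite{MR1406564} and Kyprianou \cite{MR3155252}.

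Since $W$ is non-decreasing and $W(y-x)=0$ as soon as $x>y$, the integrand increases to $W(y-b)/R(y)$ as $x\uparrow\infty$; hence $x\mapsto\mathbb{E}_x(T_b)$ is non-decreasing and, by monotone convergence, $\sup_{x\ge b}\mathbb{E}_x(T_b)=\lim_{x\to\infty}\mathbb{E}_x(T_b)=m(b)$, with $m$ as in \eqref{m}. Using the sandwich \eqref{boundW} at the point $y-b$, together with $W(y-b)/W(y)\to 1$ and \eqref{boundW} again at $y$, one gets $W(y-b)/R(y)\asymp 1/(y\Psi(1/y)R(y))$ as $y\to\infty$; since $R$ is continuous and strictly positive on $(0,\infty)$ the contribution of a neighbourhood of $b$ is finite, so $m(b)<\infty$ if and only if \eqref{cdicondition} holds. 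The same estimates (using $W(y-b)\le W(y-b_0)\le C\,W(y-b)$ for large $y$ when $0<b\le b_0$) show that $m$ is finite at one point of $(0,\infty)$ iff it is finite everywhere.

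Assembling the pieces: if \eqref{cdicondition} fails, then $m(b)=\infty$ for every $b$, so $\sup_{x\ge b}\mathbb{E}_x(T_b)=\infty$ for every $b$, condition (b) of Lemma~\ref{equivcdi} fails, and $\infty$ is not an entrance boundary. If \eqref{cdicondition} holds, then $m(b)<\infty$ for all $b$, so $\sup_{x\ge b}\mathbb{E}_x(T_b)=m(b)<\infty$; by Lemma~\ref{equivcdi} condition \eqref{0.5'} holds, and since the process does not explode when $\gamma\ge0$, $\infty$ is an instantaneous entrance boundary. For the value $\mathbb{E}_\infty(T_b)$: by Proposition~\ref{momentunderPinfty}(a) there is $b_1$ with $\mathbb{E}_\infty(e^{T_b})<\infty$, hence $\mathbb{E}_\infty(T_b)<\infty$ and $T_b<\infty$ $\mathbb{P}_\infty$-a.s.\ for $b\ge b_1$; Proposition~\ref{momentunderPinfty}(b) with $h=\mathrm{id}$ then gives $\mathbb{E}_\infty(T_b)=\lim_{x\to\infty}\mathbb{E}_x(T_b)=m(b)$ for $b\ge b_1$. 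For an arbitrary $b>0$, pick $b_1>b$ as above and use the strong Markov property at $T_{b_1}$: since $Z$, and therefore $X$, has no negative jumps, $X_{T_{b_1}}=b_1$, whence $\mathbb{E}_\infty(T_b)=\mathbb{E}_\infty(T_{b_1})+\mathbb{E}_{b_1}(T_b)=m(b_1)+\int_b^\infty\frac{W(y-b)-W(y-b_1)}{R(y)}\ddr y=m(b)<\infty$, the last equality being a one-line rearrangement using $W(y-b_1)=0$ for $y<b_1$. The step I expect to be most delicate is obtaining the occupation-time identity in a form valid in $[0,\infty]$ without presupposing $m(b)<\infty$ — needed so that the ``only if'' direction is not circular — which is why I truncate with the indicator $\mathbbm{1}_{(b,N)}$ before letting $N\to\infty$; interchanging the supremum over $x$ with the limit is then immediate from the monotonicity of $x\mapsto\mathbb{E}_x(T_b)$.
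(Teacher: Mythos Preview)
Your proof is correct and follows essentially the same strategy as the paper: compute $\mathbb{E}_x(T_b)$ as a weighted occupation time, pass to the limit $x\to\infty$ by monotone convergence, compare the resulting integral $m(b)$ with \eqref{cdicondition} via the bounds \eqref{boundW} and $W(y-b)/W(y)\to1$, and then invoke Lemma~\ref{equivcdi} and Proposition~\ref{momentunderPinfty}.

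There are two minor technical differences worth noting. First, for the identity $\mathbb{E}_x(T_b)=\int_b^\infty\frac{W(y-b)-W(y-x)}{R(y)}\,\ddr y$, the paper (Lemma~\ref{l4}) quotes the potential density of $Z$ killed at $\tau_b^-$ directly from Kuznetsov et al.\ \cite[Theorem~2.7(ii)]{MR3014147}, whereas you recover it from the paper's own Theorem~\ref{occupation} by truncating with $\mathbbm{1}_{(b,N)}$ and extracting the linear term in $q$; your route is more self-contained but longer, and your remark that the identity must hold in $[0,\infty]$ so that the ``only if'' direction is not circular is well taken. Second, to obtain $\mathbb{E}_\infty(T_b)=m(b)$ for \emph{all} $b>0$, the paper first shows $m(b)<\infty$ for all $b$ via $W(y-c)/W(y-b)\to1$ and then applies Proposition~\ref{momentunderPinfty}(b) at each $b$; you instead get it for large $b$ and propagate downwards using the strong Markov property at $T_{b_1}$ together with $X_{T_{b_1}}=b_1$. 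Both arguments are equally short; yours has the small advantage of making the finiteness of $\mathbb{E}_\infty(T_b)$ transparent without needing to verify the hypothesis $\mathbb{E}_\infty(h(T_b))<\infty$ of Proposition~\ref{momentunderPinfty}(b) separately for small $b$.
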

	\noindent The proof of Theorem \ref{main1} is deferred to Section \ref{firstentrance}.
	\begin{remark}
		One has $x\Psi\left(1/x\right)\underset{x\rightarrow \infty}{\longrightarrow} \Psi'(0+)=\gamma$. In the subcritical case, $\gamma>0$, and we see that \eqref{cdicondition} holds if and only if $\int^{\infty}\frac{\ddr x}{R(x)}<\infty.$  It is worth noticing that this integral condition  is the condition for $\infty$ to be an entrance for the deterministic flow $(x_t,t\geq 0)$ solving \eqref{ode}. In the critical case, $\gamma=0$, and the condition $\int^{\infty}\frac{\ddr x}{R(x)}<\infty$ is not sufficient for \eqref{cdicondition} to hold, but only necessary.
	\end{remark}

	We focus now on nonlinear CSBPs  satisfying \eqref{cdicondition}. In the subcritical case, for which $\gamma>0$, one defines the function $\varphi$ on $(0, \infty)$ as
	\begin{equation}\label{phi}
	\varphi:b\mapsto  \frac{1}{\gamma }\int_{b}^{\infty}\frac{\ddr x}{R(x)}.
	\end{equation}
	The following conditions will play a role in the sequel:
	$$\mathbb{H}_1: \underset{h\rightarrow 1+}{\limsup\ }\underset{x\rightarrow \infty}{\liminf} \frac{\varphi(hx)}{\varphi(x)}=1 \text{ and }\mathbb{H}_2:\text{ for any } h>1, \underset{x\rightarrow \infty}{\liminf} \frac{\varphi(x)}{\varphi(hx)}>1.$$
	Define for any $z>0$ and $\rho>0$, \begin{equation}\label{nobigvalley}V(z,\rho):=\sup_{x\geq z}\left(\frac{R(x)}{R(x+\rho z)}-1\right)_+
	\end{equation}
	where $(x)_+:=\max(x,0)$ is the positive part of $x$. The next condition can be interpreted as requiring the function $R$ to have no deep valleys near $\infty$: $$\mathbb{H}_3: \underset{\rho \rightarrow 0+}{\limsup } \underset{z\rightarrow \infty}{\lim} V(z,\rho)=0.$$
	Note that when $R$ is non-decreasing,  the function $V$ is identically zero and  $\mathbb{H}_3$ always holds.
	These three conditions frequently appear in the asymptotic analysis of stochastic processes, see e.g. Buldygin  et al. \cite{Buldyginbook} and the references therein.
	\begin{theorem}\label{regularCDI}  Assume $\gamma>0$ and \eqref{cdicondition}. If conditions $\mathbb{H}_1$, $\mathbb{H}_2$ and $\mathbb{H}_3$ are satisfied,  then
		in $\mathbb{P}_\infty$-probability, $\frac{X_t}{\varphi^{-1}(t)}\underset{t\rightarrow 0+}{\longrightarrow} 1.$
	\end{theorem}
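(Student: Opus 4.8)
\emph{Plan.} The plan is to sandwich $X_t$ between $(1-\ep)\varphi^{-1}(t)$ and $(1+\ep)\varphi^{-1}(t)$ with $\mathbb P_\infty$-probability tending to $1$ as $t\to0+$, reducing both sides to the behaviour, as $b\to\infty$, of the first passage times $T_b$ of $X$ under $\mathbb P_\infty$. Concretely, I would first prove the single input
\[
\frac{T_b}{\varphi(b)}\ \xrightarrow[b\to\infty]{}\ 1\quad\text{in }\mathbb P_\infty\text{-probability},
\]
and then invert it. Writing $\ell:=\varphi^{-1}(t)\to\infty$ (recall $\varphi$ is continuous and strictly decreasing with $\varphi'=-1/(\gamma R)$), the lower bound is immediate: since $X_0=\infty$ and $X_s\ge(1-\ep)\ell$ on $[0,t]$ whenever $T_{(1-\ep)\ell}>t$, we have $\{T_{(1-\ep)\ell}>t\}\subseteq\{X_t\ge(1-\ep)\ell\}$, and by $\mathbb H_2$ applied with $h=1/(1-\ep)$ there is $\delta>0$ with $\varphi((1-\ep)\ell)\ge(1+\delta)\varphi(\ell)=(1+\delta)t$ for all large $\ell$, so the concentration of $T_{(1-\ep)\ell}$ forces $\mathbb P_\infty(X_t\ge(1-\ep)\ell)\to1$. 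For the upper bound I would split, with $a':=(1+\ep/2)\ell$,
$\mathbb P_\infty(X_t>(1+\ep)\ell)\le\mathbb P_\infty(T_{a'}\ge t)+\mathbb P_\infty\big(T_{a'}<t,\ \sup_{T_{a'}\le s\le t}X_s>(1+\ep)\ell\big)$,
treat the first term exactly as in the lower bound (now using $\mathbb H_2$ with $h=1+\ep/2$), and for the second use the strong Markov property at $T_{a'}$ together with the fact that $X$, like $Z$, creeps downward, so $X_{T_{a'}}=a'$; this bounds it by $\mathbb P_{a'}(\sup_{s\ge0}X_s>(1+\ep)\ell)\le\mathbb P_{a'}(\tau^+_{(1+\ep)\ell}<\infty)=(W(\infty)-W(\ep\ell/2))/W(\infty)$, which tends to $0$ because $\gamma>0$ makes $W(\infty)=1/\gamma$ finite and $W(y)\to W(\infty)$ (Lemma~\ref{l1}). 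Combining the two bounds gives the claim.

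\emph{The main step.} Thus the theorem reduces to the concentration of $T_b$ under $\mathbb P_\infty$, and this is where the real work is. Theorem~\ref{main1} already supplies $\mathbb E_\infty(T_b)=m(b)$; I would pair it with a second-moment estimate obtained as follows. Apply Theorem~\ref{occupation} with $\omega=\lambda/R$ and let $x\to\infty$ in the resulting identity (legitimate by Proposition~\ref{momentunderPinfty}(b), since $T_b=\eta(\tau^-_b)<\infty$ $\mathbb P_\infty$-a.s.), which should give the closed form $\mathbb E_\infty[e^{-\lambda T_b}]=\big(\sum_{n\ge0}\lambda^n w_n(b)\big)^{-1}$ with $w_0\equiv1$, $w_1=m$, and $w_{n+1}(x)=\int_x^\infty W(z-x)w_n(z)R(z)^{-1}\,\ddr z$; differentiating twice at $\lambda=0$ then yields $\mathrm{Var}_\infty(T_b)=m(b)^2-2w_2(b)$. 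It remains to show $m(b)\sim\varphi(b)$ and $w_2(b)\sim\tfrac12\varphi(b)^2$ as $b\to\infty$: using $W\uparrow W(\infty)=1/\gamma$ and the identity $R^{-1}=-\gamma\varphi'$ one reduces both quantities to integrals of $\varphi$, and $\mathbb H_1$ is precisely what is needed to discard the contributions coming from a fixed neighbourhood of the lower endpoint. Then $\mathrm{Var}_\infty(T_b)=o(\varphi(b)^2)$ and the concentration follows by Chebyshev.

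\emph{Where the difficulty lies.} The inversion is soft (its only delicate point --- passing from a first-passage event for $T_b$ to a statement about $X_t$ --- is clean exactly because $X$ has no negative jumps), and the upper bound is cheap (a subcritical spectrally positive process started at $a'$ essentially never climbs far above $a'$). The main obstacle is the second-moment asymptotics of the previous paragraph: reconciling the non-monotonicity of $R$ with the scale-function integrals is what the conditions $\mathbb H_1$, $\mathbb H_2$, $\mathbb H_3$ are tailored for --- $\mathbb H_1$ to control the near-endpoint contributions in $m(b)$ and $w_2(b)$, $\mathbb H_2$ to produce the quantitative gaps used in the inversion, and $\mathbb H_3$ to keep the estimates under control if one instead proves the concentration by decomposing $T_b$ along a geometric ladder of levels into a sum of independent passage times. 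I expect the careful bookkeeping in that step to be the bulk of the proof.
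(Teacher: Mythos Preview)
Your plan is correct and follows the same two-step skeleton as the paper (show $T_b/\varphi(b)\to1$ in $\mathbb P_\infty$-probability via a second-moment bound, then invert), but your execution of each step differs, and in one place is sharper. For the inversion, the paper passes through the running infimum: Lemma~\ref{liminf_a}(a) gives $\underline X_t/m^{-1}(t)\to1$, Proposition~\ref{equivinf} (a Borel--Cantelli argument using $W(\infty)<\infty$) gives $X_t/\underline X_t\to1$ a.s., and Lemma~\ref{inverse-equiv} converts $m^{-1}$ to $\varphi^{-1}$; your direct sandwich of $X_t$, with the upper tail handled by the overshoot bound $\mathbb P_{a'}(\tau^+_{(1+\ep)\ell}<\infty)=(W(\infty)-W(\ep\ell/2))/W(\infty)\to0$, telescopes these three steps into one and is valid for exactly the same reason ($\gamma>0$ forces $W(\infty)<\infty$). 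For the concentration, the paper bounds the double-integral form~\eqref{1.4} of $\mathrm{Var}_\infty(T_b)$ directly (Theorem~\ref{cvinproba}); an integration by parts there produces a term controlled by $V(b,h-1)$ and hence requires $\mathbb H_3$. Your route via $\mathrm{Var}_\infty(T_b)=m(b)^2-2w_2(b)$ (this is exactly Corollary~\ref{c4}) avoids this entirely: $m(b)\le\varphi(b)$ and $w_2(b)\le\tfrac12\varphi(b)^2$ follow trivially from $W\le1/\gamma$ and $m\le\varphi$, while the matching lower bounds come from restricting the integral to $z\ge bh$, using $W(z-b)\ge W((h-1)b)\to1/\gamma$ and $m(z)\ge(1-\ep)\varphi(z)$ for large $z$ (Lemma~\ref{equivmphi}, which needs only $\mathbb H_1$), and then invoking $\mathbb H_1$ to make $\varphi(bh)/\varphi(b)$ close to~$1$. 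So your argument in fact establishes the theorem under $\mathbb H_1$ and $\mathbb H_2$ alone; the role you tentatively assign to $\mathbb H_3$ (``bookkeeping for a ladder decomposition'') never materialises in your second-moment approach, which is a small but genuine improvement over the paper's hypotheses.
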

	To exemplify Theorem \ref{regularCDI}, we will consider branching rate functions with \textit{regular variation}. Recall that  $R$ is  regularly varying at $\infty$ with index $\theta\in 	[0, \infty)$ if for any $\lambda>0$
	\begin{equation}\label{RV}\frac{R(\lambda x)}{R(x)}\underset{x\rightarrow \infty}{\longrightarrow}\lambda^{\theta}.\end{equation}
	The function $R$ is also called \textit{slowly varying} if  $\theta=0$.
	We refer the reader to Bingham et al. \cite{MR1015093} for a reference on those functions. Consider a function $R$, satisfying \eqref{RV} with  $\theta>1$. Karamata's theorem, see e.g. \cite[Proposition 1.5.10]{MR1015093}, ensures that
	\eqref{cdicondition} is satisfied and that the function $\varphi$ is regularly varying at $\infty$ with index $1-\theta<0$. This implies that  $\mathbb{H}_1$ and $\mathbb{H}_2$ are fulfilled.  Moreover, \cite[Theorem 1.5.12]{MR1015093} entails that $\varphi^{-1}$ is regularly varying at $0$ with index $-1/(\theta -1)$. If furthermore, $R$ is assumed to be non-decreasing, then $\mathbb{H}_3$  is also satisfied and Theorem \ref{regularCDI} applies. \\
	
	In the setting of a regularly varying branching rate $R$, some refinements of the condition $\mathbb{H}_3$ will enable us to get an almost-sure speed for two important classes of subcritical branching mechanisms. Recall $\pi$ the L\'evy measure of the underlying L\'evy process.
	%
	\begin{theorem}\label{regularvarR} Assume $\gamma>0$ and $R$ regularly varying at $\infty$ with index $\theta>1$.
		In both of the following cases (a) and (b)
		\begin{enumerate}
			\item[(a)] $\Psi(\lambda)-\gamma \lambda\sim c\lambda^{1+\delta} \,\,\, \text{ as}\,\,\, \lambda\rightarrow 0+$ for some $\delta\in (0,1)$, $c>0$ and $V(z,\frac{1}{\sqrt{z}})=O(z^{-\delta/2})\quad\mbox{as}~z\to\infty$,
			\item[(b)] there exists $\nu\in( 0,\infty)$ such that $\Psi(-\nu)=0$, $\int^1_0u\pi(\ddr u)<\infty$,  and $V(z,\frac{2 \ln \ln z}{\nu z})=O(\ln(z)^{-2}) \quad\mbox{as}~z\to\infty$,
		\end{enumerate}
		we have $\frac{X_t}{\varphi^{-1}(t)}\underset{t\rightarrow 0+}{\longrightarrow} 1$ $\mathbb{P}_\infty\text{-almost-surely}$
		and the speed function $\varphi^{-1}$ is regularly varying at $0$ with negative index $-{1}/{(\theta-1)}$.
	\end{theorem}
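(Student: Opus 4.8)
The plan is to upgrade the in-probability convergence of Theorem \ref{regularCDI} to almost-sure convergence by establishing a quantitative two-sided control of the first passage times $T_b$ under $\mathbb{P}_\infty$, and then applying a Borel--Cantelli argument along a geometric sequence of levels. First I would note that since $R$ is regularly varying with index $\theta>1$, Karamata's theorem already gives \eqref{cdicondition}, that $\varphi$ is regularly varying at $\infty$ with index $1-\theta$, and that $\varphi^{-1}$ is regularly varying at $0$ with index $-1/(\theta-1)$; this takes care of the last assertion. The core of the argument is the relation $\{X_t < b\}\supseteq\{T_b\le t\}$ together with the fact that, by Theorem \ref{main1}, $\mathbb{E}_\infty(T_b)=m(b)=\int_b^\infty \frac{W(x-b)}{R(x)}\ddr x$; using $W(\infty)=1/\gamma$ and the asymptotic $W(y)\to 1/\gamma$, one gets $m(b)\sim\varphi(b)$ as $b\to\infty$, so $\varphi^{-1}(t)$ is, to leading order, the level $b$ such that $\mathbb{E}_\infty(T_b)=t$. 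The strategy is then to show $T_b/m(b)\to 1$ in $\mathbb{P}_\infty$-probability fast enough to be summable along $b_k=\lambda^k$.

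The key step is a sharp variance (or second-moment) estimate for $T_b$ under $\mathbb{P}_\infty$. Writing $T_b = \zeta\big(\eta^{-1}(\cdot)\big)$ in terms of the underlying Lévy process via the time-change \eqref{timechangedef}, we have $T_b=\int_0^{\tau_b^-}\frac{\ddr s}{R(Z_s)}$ where $Z_0=\infty$ is interpreted through the entrance law. Decomposing the passage from level $x$ down to level $b$ into successive unit (or geometric) decrements and using the strong Markov property, $T_b$ becomes a sum of roughly independent contributions, each of order $1/(\gamma R(\cdot))$ times the local occupation; the variance of each block is governed by the fluctuations of $Z$ controlled by the scale function $W$ through Theorem \ref{occupation}. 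The two cases (a) and (b) correspond precisely to quantifying how fast these fluctuations decay: in case (a), $\Psi(\lambda)-\gamma\lambda\sim c\lambda^{1+\delta}$ controls the Lévy measure's small-jump behaviour and forces the overshoot of downcrossings to be of order $z^{-\delta/2}$ relative to the scale $\sqrt z$, which is exactly what the hypothesis $V(z,1/\sqrt z)=O(z^{-\delta/2})$ is tailored to absorb; in case (b), the existence of a root $\nu$ of $\Psi$ gives exponential moment control (Cramér-type), the overshoot is of logarithmic order $\tfrac{\ln\ln z}{\nu z}$, and $V(z,\tfrac{2\ln\ln z}{\nu z})=O(\ln(z)^{-2})$ is the matching no-deep-valley condition that keeps the perturbation of $R$ over such a window negligible. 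In both cases I would establish $\mathrm{Var}_\infty(T_b)=o\big(m(b)^2/(\log(1/m(b)))^2\big)$ or a similar summable rate, via Chebyshev's inequality along $b_k=\lambda^k$ with $\lambda$ close to $1$, and then use monotonicity of $t\mapsto X_t$'s running infimum together with conditions $\mathbb{H}_1$--$\mathbb{H}_2$ (automatic here by regular variation) to interpolate between the discrete levels and conclude $X_t/\varphi^{-1}(t)\to 1$ $\mathbb{P}_\infty$-a.s.

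To carry out the Borel--Cantelli step cleanly I would argue as follows: fix $\lambda>1$ and $b_k=\lambda^k$, let $t_k=m(b_k)$; by the variance bound, $\mathbb{P}_\infty\big(|T_{b_k}-t_k|>\ep t_k\big)$ is summable in $k$, so a.s. $T_{b_k}\sim t_k$. Since $t\mapsto \inf_{s\le t}X_s$ is nonincreasing and $\{T_{b_k}\le t<T_{b_{k+1}}\}$ forces $b_{k+1}\le \inf_{s\le t}X_s$ while $X_t$ cannot be much larger than $b_k$ before many downcrossings (using $\mathbb{P}_\infty$-a.s. finiteness of all the $T_b$ from Proposition \ref{momentunderPinfty}), one sandwiches $X_t$ between constant multiples of $\varphi^{-1}(t)$ on the event $T_{b_k}\le t<T_{b_{k+1}}$, with the multiplicative gap tending to $1$ as $\lambda\downarrow 1$; a diagonal argument over a sequence $\lambda_j\downarrow 1$ then yields the a.s. limit $1$. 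The main obstacle I anticipate is the variance estimate in case (a): the sum-of-blocks decomposition introduces dependence through the overshoots at successive downcrossings, and bounding the covariance terms requires a careful uniform control of the overshoot distribution of a spectrally negative Lévy process crossing a level, which is where the precise regular-variation index $1+\delta$ of $\Psi$ near $0$ and the tailored decay of $V(z,1/\sqrt z)$ must be combined — getting the exponents to line up so that the error is genuinely summable (rather than merely $o(1)$) is the delicate point. Case (b) should be technically easier since the exponential moment from $\Psi(-\nu)=0$ gives concentration directly, but it requires the extra integrability $\int_0^1 u\,\pi(\ddr u)<\infty$ to make sense of the relevant exponential functional.
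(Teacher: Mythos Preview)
Your outline has the right skeleton --- variance control of $T_b/m(b)$, Borel--Cantelli along a geometric-type sequence, then interpolation --- and this is indeed how the paper proceeds. But the crucial variance step is left vague and in one place misconceived, and that is where the proof actually lives.

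First, the block decomposition worry is misplaced: by the strong Markov property and absence of negative jumps, the increments $\tau_k=T_{z_k}-T_{z_{k+1}}$ are \emph{genuinely} independent, so there are no covariance terms to bound. What you are missing is the explicit formula $\text{Var}_\infty(T_b)=2\int_b^\infty\frac{W(x-b)}{R(x)}\int_x^\infty\frac{W(y-b)-W(y-x)}{R(y)}\,\ddr y\,\ddr x$ (Corollary~\ref{c4}), which the paper bounds directly by three terms: one involving $\Delta(z):=W(\infty)-W(z)$, one involving $\varphi(b)-\varphi(b(1+p(b)))$, and one involving $V(b,p(b))$, for a freely chosen window $p(b)$. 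The entire argument then reduces to picking $p$ so that all three terms are $O(\Delta(bp(b)))$ and so that $\sum_n\Delta(z_np(z_n))<\infty$ along the sequence $m(z_n)=(1-\rho)^n m(z_0)$. The asymptotics of $\Delta$ are the heart of cases (a) and (b): in (a) a Tauberian argument gives $\Delta(x)\sim c_1 x^{-\delta}$, which dictates $p(z)=z^{-1/2}$; in (b) Cram\'er's theorem gives $\Delta(x)\sim c_\nu\gamma^{-1}e^{-\nu x}$, which dictates $p(z)=\tfrac{2\ln\ln z}{\nu z}$. Your proposal never isolates $\Delta$ and instead attributes the role of the $V$-hypothesis to ``overshoot control'' --- this is backwards: $V$ measures the depth of valleys of the deterministic function $R$ (so that $1/R(x+\rho z)$ is not much larger than $1/R(x)$), while the L\'evy fluctuations enter only through $\Delta$. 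Without identifying $\Delta$ and its decay you cannot produce a summable rate, and the generic target $\text{Var}_\infty(T_b)=o(m(b)^2/(\log(1/m(b)))^2)$ you propose is neither what is proved nor obviously sufficient.

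Second, the passage from $\underbar X_t$ to $X_t$ is not automatic and is not covered by ``$X_t$ cannot be much larger than $b_k$''. In the paper this is a separate Borel--Cantelli argument (Proposition~\ref{equivinf}) using the two-sided exit identity $\mathbb{P}_{a_n}(T^+_{a_{n+1}}<T_{a_{n-1}})=1-W(a_{n+1}-a_n)/W(a_{n+1}-a_{n-1})$ along $a_n=a^{1+n\delta}$; the subcriticality $\gamma>0$ is essential here, since it forces $W(\infty)<\infty$ and makes the telescoping sum converge.
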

	\begin{remark} The first condition in \textrm{(b)} is known as Cram\'er condition, see e.g. Kyprianou \cite[Chapter 7]{MR3155252}. It requires that the L\'evy measure $\pi$ has an exponential moment.  The second condition on  $\pi$ means that there is no accumulation of small jumps. In particular, Cram\'er condition is satisfied in the pure diffusive case, for which for any $\lambda\geq 0$,  $\Psi(\lambda)= \gamma \lambda+\frac{\sigma^2}{2}\lambda^{2}$, and $(b)$ is fulfilled as soon as $V(z,\frac{2 \ln \ln z}{\nu z})=O(\ln(z)^{-2}) \quad\mbox{as}~z\to\infty$.  Also notice that if $R$ is increasing, then conditions of $V$ in \textrm{(a,b)} are fulfilled.
	\end{remark}
	In the following theorem, we study the critical processes started at $\infty$ when $\Psi$ is stable-like and $R$ is regularly varying at infinity.
	Recall the running infinum $(\underline{X}_t,t\geq 0)$ defined by $\underbar{X}_t=\inf_{0\le s\le t} X_s$. We find a renormalisation in law of  this running infimum. Recall $m$ defined in \eqref{m} and
	denote its right-inverse by $m^{-1}$.
	\begin{theorem}\label{stablecriticalrv}
		Assume that $\gamma=0$,  $R$ is regularly varying at $\infty$ with index $\theta>\alpha$, and $\Psi(\lambda)\sim c\lambda^{\alpha}$ as $\lambda$ goes to $0$ for $\alpha\in (1,2]$ and $c>0$. Then $\infty$ is an instantaneous entrance boundary and under $\mathbb{P}_\infty$
		\[\limsup_{t\rightarrow 0+}\frac{X_t}{{\underline X}_t}=\infty \text{ a.s and }
		\frac{\underbar{X}_t}{m^{-1}(t)}{\longrightarrow} S_{\alpha,\theta}^{\frac{1}{\theta-\alpha}} ~\text{ in law
			as } t \text{ goes to } 0,\]
		where $m^{-1}$ is regularly varying at $0$ with negative index $-\frac{1}{\theta-\alpha}$,
		and $S_{\alpha,\theta}$ is  a positive random variable with Laplace transform
		\[\mathbb{E}[e^{-sS_{\alpha,\theta}}]=\left[\sum^\infty_{n=0}\left( \frac{s\Gamma(\theta)}{\Gamma(\theta-\alpha)}\right)^n\prod_{i=1}^{n}\frac{\Gamma(i\theta-i\alpha)}{\Gamma(i\theta-(i-1)\alpha)} \right]^{-1}\!\!\!\!\! \text{ for any } s\geq 0,\]
		where $\Gamma$ is the Gamma function and the empty product, with $n=0$, is taken to be $1$.
	\end{theorem}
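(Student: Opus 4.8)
The plan is to reduce Theorem \ref{stablecriticalrv} to the study of the first passage times $T_b$ under $\mathbb{P}_\infty$ in the critical stable-like regime, exploiting the fact that $\{\underline{X}_t < b\} = \{T_b \le t\}$. Concretely, for $u>0$ write $b = m^{-1}(tu)$; then $\mathbb{P}_\infty(\underline{X}_t/m^{-1}(t) < u) = \mathbb{P}_\infty(T_{m^{-1}(tu)} \le t)$, so one needs the scaling limit of $T_b/m(b)$ as $b\to\infty$. First I would establish, via Theorem \ref{occupation} together with the time-change, an exact expression for the Laplace transform $\mathbb{E}_\infty[e^{-qT_b}]$ in terms of the iterated kernels $W^{(\omega)}_n$ with the weight $\omega(\cdot) = q/R(\cdot)$: indeed, since the occupation time $\int_0^{\tau_b^-} ds/R(Z_s) = \eta(\tau_b^-)$ is exactly the lifetime-type quantity governing $T_b$ for $X$, one gets $\mathbb{E}_x[e^{-qT_b}] = \sum_n W^{(q/R)}_n(x)/\sum_n W^{(q/R)}_n(b)$ and then lets $x\to\infty$ using Proposition \ref{momentunderPinfty}(b) (after checking the ratio $W(z-x)/W(z-b)\to 1$, which holds by Lemma \ref{l1}). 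This should give
\[
\mathbb{E}_\infty[e^{-qT_b}] = \Big(\sum_{n=0}^\infty W^{(q/R)}_n(b)\Big)^{-1}.
\]

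Next I would perform the asymptotic analysis of the kernels $W^{(q/R)}_n(b)$ as $b\to\infty$ under the two regular-variation hypotheses. With $\Psi(\lambda)\sim c\lambda^\alpha$ one has, by \eqref{boundW} and a Karamata/Tauberian argument, $W(x)\sim x^{\alpha-1}/(c\,\Gamma(\alpha))$ — in the pure stable case this is exact from \cite[Example 4.17]{MR3014147}. Since $R$ is regularly varying with index $\theta>\alpha$, a repeated application of Karamata's theorem to the recursion $W^{(\omega)}_{n+1}(b) = \int_b^\infty W(z-b)\,\frac{q}{R(z)}\,W^{(\omega)}_n(z)\,dz$ should yield $W^{(q/R)}_n(b) \sim \big(q\,\Gamma(\theta)/(c\,\Gamma(\theta-\alpha)\,\Gamma(\alpha))\big)^n \prod_{i=1}^n \frac{\Gamma(i\theta-i\alpha)}{\Gamma(i\theta-(i-1)\alpha)} \cdot \frac{b^{n(\alpha-\theta)}}{\text{(slowly varying correction)}}$ — the constant being read off from Karamata $\int_b^\infty z^{-\rho}\,\mathrm{RV}\,dz$ computations with the Beta-function identity $\int_0^1 (1-v)^{\alpha-1} v^{-s}dv = \Gamma(\alpha)\Gamma(1-s)/\Gamma(\alpha+1-s)$. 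Plugging $q \to q\,m(b)^{-1}$... more precisely substituting so that $b^{n(\alpha-\theta)} m(b)^{-n}$ stays of order one (recall $m(b) = \int_b^\infty W(x-b)/R(x)\,dx \sim \text{const}\cdot b^{\alpha-\theta}\ell(b)$ is itself regularly varying with index $\alpha-\theta$ by the same Karamata argument), one obtains $\mathbb{E}_\infty[e^{-q T_b/m(b)}] \to \mathbb{E}[e^{-q S_{\alpha,\theta}^{1/(\theta-\alpha)}}]$, i.e. the stated Laplace series after absorbing the $\Gamma(\alpha)$ factors, giving convergence in law of $T_b/m(b)$. Translating back through $b=m^{-1}(tu)$ and using that $m^{-1}$ is $\mathrm{RV}_{-1/(\theta-\alpha)}$ at $0$ (\cite[Theorem 1.5.12]{MR1015093}) gives the claimed convergence $\underline{X}_t/m^{-1}(t) \to S_{\alpha,\theta}^{1/(\theta-\alpha)}$ in law, upon checking that the distribution of $S_{\alpha,\theta}$ has no atom at $0$ (equivalently the Laplace transform $\to 0$ as $s\to\infty$, which follows from the series diverging, i.e. the coefficients not decaying too fast — a growth estimate on $\prod \Gamma$-ratios using $\Gamma(i\theta-i\alpha)/\Gamma(i\theta-(i-1)\alpha) \asymp (i\theta)^{-\alpha}$).

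For the first assertion $\limsup_{t\to 0+} X_t/\underline{X}_t = \infty$ a.s., I would argue by contradiction using the jumps of $Z$ near time $0$ under $\mathbb{P}_\infty$: since $X$ inherits all the (spectrally positive) jumps of $Z$ up to the time change, and the jump rate near $\infty$ is inflated by $R$, on arbitrarily small time intervals $[0,t]$ there are jumps of $X$ of size comparable to $\underline{X}_s$ at times $s$ where $X_{s-}$ is near its running infimum; quantitatively, using the compensation formula / Lévy system for $Z$ and the regular variation of $R$, the probability that $X$ makes a jump at least as large as $\varepsilon^{-1}\underline{X}_s$ before time $t$ tends to $1$ as $t\to 0$ for every $\varepsilon$, yielding $\limsup X_t/\underline{X}_t = \infty$. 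An alternative, cleaner route is to note that $\underline{X}_t = m^{-1}(t)\cdot(S+o(1))$ in a weak sense while $X_t$ cannot concentrate (the critical process oscillates, there is no $\mathbb{P}_\infty$-a.s. speed), and exhibit a subsequence of times along which $X_t \gg m^{-1}(t)$ using an independence/Markov argument at the passage times $T_b$.

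The main obstacle I anticipate is the Karamata bookkeeping in the second step: controlling the iterated integrals $W^{(q/R)}_n(b)$ uniformly in $n$ so that one may interchange the limit $b\to\infty$ with the infinite sum $\sum_n$, and pinning down the exact constant $\Gamma(\theta)/\Gamma(\theta-\alpha)$ together with the telescoping $\Gamma$-ratio product — this requires a careful induction with explicit slowly-varying error terms and a dominated-convergence argument for the series (for which the growth bound on the product of $\Gamma$-ratios, ensuring the radius of convergence is controlled, is exactly what is needed both here and for the no-atom-at-zero claim). The probabilistic input (Theorems \ref{occupation} and Proposition \ref{momentunderPinfty}) is essentially plug-and-play once the kernel asymptotics are in hand; likewise the $\limsup$ statement is soft once one has the fluctuation picture of the critical process.
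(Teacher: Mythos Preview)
Your approach to the convergence-in-law part is essentially the paper's: you reduce to the Laplace transform of $T_b$ via Theorem~\ref{occupation}, establish $W_n(b)\sim c_n\,b^{n(\alpha-\theta)}/\ell(b)^n$ by an inductive Karamata/Tauberian computation, deduce $T_b/m(b)\Rightarrow S_{\alpha,\theta}$, and then invert via $\{\underline{X}_t<b\}=\{T_b\le t\}$ together with regular variation of $m^{-1}$. One slip: with $b=m^{-1}(tu)$ you do \emph{not} get $\mathbb{P}_\infty(\underline{X}_t/m^{-1}(t)<u)$; the correct substitution is $b=u\,m^{-1}(t)$ (the paper writes $\mathbb{P}_\infty(\underline{X}_t/m^{-1}(t)\le y)=\mathbb{P}_\infty(T_{ym^{-1}(t)}\le t)$ and then uses $t/m(ym^{-1}(t))\to y^{\theta-\alpha}$). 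This is cosmetic.

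The genuine gap is your argument for $\limsup_{t\to 0+}X_t/\underline{X}_t=\infty$. Your proposed mechanism is ``large jumps of $Z$ inflated by $R$'', made quantitative through a compensation/L\'evy-system estimate. But the hypothesis allows $\alpha=2$, in which case $\Psi(\lambda)\sim c\lambda^2$ and the driving L\'evy process can be pure Brownian motion with \emph{no jumps at all}; the $\limsup$ statement is still claimed (and true) in that case, so a jump-based argument cannot work uniformly. Your alternative (``exhibit a subsequence along which $X_t\gg m^{-1}(t)$'') is too vague to assess.

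The paper's route for this part is different and robust across $\alpha\in(1,2]$: it uses only the two-sided exit probability for $Z$ via the scale function. From $W(x)\sim c'x^{\alpha-1}$ one gets, for a geometric grid $a^n$,
\[
\mathbb{P}_{a^n}\bigl(T^{+}_{a^{n+1}}<T_{a^{n-1}}\bigr)=1-\frac{W(a^{n+1}-a^n)}{W(a^{n+1}-a^{n-1})}\ \longrightarrow\ 1-\Bigl(\tfrac{a}{a+1}\Bigr)^{\alpha-1}>0,
\]
so the events $\{T^{+}_{a^{n+1}}\circ\theta_{T_{a^n}}<T_{a^{n-1}}\circ\theta_{T_{a^n}}\}$ occur for infinitely many $n$ by Borel--Cantelli (they are independent under $\mathbb{P}_\infty$ by the strong Markov property at the $T_{a^n}$). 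This yields $\limsup X_t/\underline{X}_t\ge a$ for every $a>1$. Replacing your jump heuristic by this scale-function/Borel--Cantelli argument closes the gap and requires no additional machinery beyond Lemma~\ref{l1}.
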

	
	Theorem \ref{stablecriticalrv} naturally leads to the question whether the critical processes that come down from infinity,  always oscillate widely or not. In the next proposition, we find a class of functions $R$ for which $\mathbb{H}_1$ is not satisfied and the running infimum of the critical process can be renormalized to converge in probability towards $1$.
	\begin{prop}\label{fastCDI}  Assume that $R(x)=g(x)e^{\theta x}$ for $x>0$ and some constant $\theta>0$, and that the function $g$ is continuous on $[0, \infty)$, strictly positive on $(0,\infty)$ and regularly varying at $\infty$. Then
		$\infty$ is an instantaneous entrance boundary and in $\mathbb{P}_\infty$-probability,
		$$\frac{\underline X_t}{m^{-1}(t)}\underset{t\rightarrow 0}{\longrightarrow} 1 \text{ in the critical case and }
		\frac{X_t}{\varphi^{-1}(t)}\underset{t\rightarrow 0}{\longrightarrow} 1 \text{ in the subcritical case}.$$
		Moreover, the speed functions $t\mapsto \varphi^{-1}(t)$ and $t\mapsto m^{-1}(t)$ are slowly varying at $0$. 
	\end{prop}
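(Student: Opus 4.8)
The plan is to reduce both assertions to two‑sided estimates on the first passage times $T_b$ under $\mathbb{P}_\infty$, the exponential form of $R$ forcing a very precise control of $m(b)=\mathbb{E}_\infty(T_b)$ and of $\varphi$.

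\emph{Entrance and asymptotics.} Since $g$ is regularly varying, $g(x)^{-1}=o(e^{\theta x/4})$, while $W(x)=o(e^{\theta x/2})$ by Lemma~\ref{l1}; combined with \eqref{boundW} this gives $\frac{1}{x\Psi(1/x)R(x)}\le c_1^{-1}\frac{W(x)}{R(x)}=o(e^{-\theta x/4})$, so \eqref{cdicondition} holds and Theorem~\ref{main1} yields that $\infty$ is an instantaneous entrance boundary with $\mathbb{E}_\infty(T_b)=m(b)<\infty$. Writing $m(b)=\int_0^\infty\frac{W(y)}{R(b+y)}\,\ddr y=\frac{e^{-\theta b}}{g(b)}\int_0^\infty W(y)e^{-\theta y}\,\frac{g(b)}{g(b+y)}\,\ddr y$ and applying dominated convergence (Potter's bounds give $\frac{g(b)}{g(b+y)}\le A(1+y)^{|\varrho|+\beta}$ for $b$ large, $\varrho$ the index of $g$, and $W(y)e^{-\theta y}$ decays exponentially), together with $\int_0^\infty e^{-\theta y}W(y)\,\ddr y=\Psi(\theta)^{-1}$ from \eqref{defscale}, one obtains $m(b)\sim\frac{1}{\Psi(\theta)R(b)}$; the same computation gives $\varphi(b)\sim\frac{1}{\gamma\theta R(b)}$. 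Thus $-\log m(b)=\theta b+O(\log b)$, hence $m^{-1}(t)\sim\frac1\theta\log(1/t)$ and $\varphi^{-1}(t)\sim\frac1\theta\log(1/t)$, both slowly varying at $0$. For $\epsilon\in(0,1)$ this gives, as $a\to\infty$, $m((1+\epsilon)a)/m(a)\to0$, $m((1-\epsilon)a)/m(a)\to\infty$, and likewise $m((1+\delta)a)/\varphi(a)\to0$, $m((1-\epsilon)a)/\varphi(a)\to\infty$ (the exponential factors $e^{\mp\theta\epsilon a}$ dominating the bounded ratios $g(a)/g((1\pm\epsilon)a)\to(1\pm\epsilon)^{-\varrho}$); these are the only quantitative inputs.

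\emph{A refined Markov inequality and the critical case.} I claim $\mathbb{P}_\infty(T_b\le t)\le t/m(b)$ for every $t>0$ and $b$ large. Because $X$ has no negative jumps it creeps down through every level, so $X_{T_y}=y$, and the strong Markov property at $T_y$ gives $\mathbb{E}_y(T_b)=m(b)-m(y)\le m(b)$ for $b\le y<\infty$. Conditioning at the fixed time $t$: on $\{T_b>t\}$ one has $X_t\ge b$ and $\mathbb{E}_\infty[(T_b-t)_+\mid\mathcal{F}_t]=\mathbf{1}_{\{T_b>t\}}\mathbb{E}_{X_t}(T_b)\le m(b)\mathbf{1}_{\{T_b>t\}}$, so $m(b)=\mathbb{E}_\infty(T_b\wedge t)+\mathbb{E}_\infty[(T_b-t)_+]\le t+m(b)\mathbb{P}_\infty(T_b>t)$, i.e. $\mathbb{P}_\infty(T_b\le t)\le t/m(b)$. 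Combined with the trivial $\mathbb{P}_\infty(T_b>t)\le m(b)/t$ this settles the critical case: putting $a=m^{-1}(t)$ (so $t=m(a)$), $\{\underline{X}_t>(1+\epsilon)a\}\subseteq\{T_{(1+\epsilon)a}>t\}$ has $\mathbb{P}_\infty$-probability $\le m((1+\epsilon)a)/m(a)\to0$, while $\{\underline{X}_t<(1-\epsilon)a\}\subseteq\{T_{(1-\epsilon)a}\le t\}$ has probability $\le m(a)/m((1-\epsilon)a)\to0$; hence $\underline{X}_t/m^{-1}(t)\to1$ in $\mathbb{P}_\infty$-probability.

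\emph{The subcritical case.} The lower bound on $X_t$ is free because $\underline{X}_t\le X_t$: with $a=\varphi^{-1}(t)$, $\mathbb{P}_\infty(X_t<(1-\epsilon)a)\le\mathbb{P}_\infty(T_{(1-\epsilon)a}\le t)\le\varphi(a)/m((1-\epsilon)a)\to0$. For the upper bound, fix $\delta=\epsilon/2$ and split on $T:=T_{(1+\delta)a}$: $\mathbb{P}_\infty(X_t>(1+\epsilon)a)\le\mathbb{P}_\infty(T>t)+\mathbb{P}_\infty(T\le t,\ X_t>(1+\epsilon)a)$, the first term being $\le m((1+\delta)a)/\varphi(a)\to0$. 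On $\{T\le t\}$ the process has crept down to level $(1+\delta)a$, so by the strong Markov property the second term is at most $\mathbb{P}_{(1+\delta)a}(\sup_{s\ge0}X_s>(1+\epsilon)a)$, which through the time-change equals $\mathbb{P}_{(1+\delta)a}(\tau^+_{(1+\epsilon)a}<\tau^-_0)=1-\frac{W((\epsilon-\delta)a)}{W((1+\epsilon)a)}$ by Lemma~\ref{l1}; since $\gamma>0$ makes $W(z)\uparrow W(\infty)=1/\gamma<\infty$, this tends to $0$. This last estimate is the heart of the proof and the only place where $\gamma>0$ is truly used: in the critical case $W(\infty)=\infty$ and the displayed probability stays bounded away from $0$ (a single upward jump really can inflate $X_t$ by a constant factor, in agreement with Theorem~\ref{stablecriticalrv}), so only the running infimum can be controlled there. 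The remaining work is routine bookkeeping: justifying the Potter estimates and identifying $\sup_sX_s$ under $\mathbb{P}_{(1+\delta)a}$ with an upcrossing of $Z$ before $\tau^-_0$.
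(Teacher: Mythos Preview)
Your proof is correct and takes a genuinely different, more elementary route than the paper. The paper proceeds via Corollary~\ref{exponen_conve}, which computes the full Laplace transform of $T_b$ and shows that $T_b/m(b)$ converges \emph{in law}; it then invokes the general transfer Lemma~\ref{liminf_a}(b) (convergence in law plus $m(x)/m(hx)\to\infty$ gives $\underline{X}_t/m^{-1}(t)\to 1$), and in the subcritical case appeals to Proposition~\ref{equivinf} (a Borel--Cantelli argument over a geometric grid) to pass from $\underline{X}_t$ to $X_t$, finally identifying $m^{-1}\sim\varphi^{-1}$ through a result of Elez--Djur\v{c}i\'c on rapidly varying functions. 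You bypass all of this: the two-sided control $\mathbb{P}_\infty(T_b>t)\le m(b)/t$ together with your ``refined Markov inequality'' $\mathbb{P}_\infty(T_b\le t)\le t/m(b)$ (which follows cleanly from $\mathbb{E}_y(T_b)=m(b)-m(y)\le m(b)$ and the decomposition $m(b)\le t+m(b)\mathbb{P}_\infty(T_b>t)$) replaces the convergence-in-law input entirely, since the exponential factor in $R$ makes $m((1\pm\epsilon)a)/m(a)\to 0,\infty$. For the subcritical upper bound on $X_t$ you use a single application of the two-sided exit formula $1-W((\epsilon-\delta)a)/W((1+\epsilon)a)\to 0$, which is exactly the ingredient behind Proposition~\ref{equivinf} but applied once rather than summed. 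The explicit asymptotic $m^{-1}(t)\sim\varphi^{-1}(t)\sim\theta^{-1}\log(1/t)$ that you extract from $m(b)\sim 1/(\Psi(\theta)R(b))$ gives the slow variation directly. Your argument is shorter and self-contained; the paper's route has the advantage that the Laplace transform computation in Corollary~\ref{exponen_conve} yields the limiting \emph{distribution} of $T_b/m(b)$ (used in Example~\ref{examples}), and Lemma~\ref{liminf_a} and Proposition~\ref{equivinf} are reusable building blocks for the other theorems.
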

	Theorems \ref{regularCDI}, \ref{regularvarR}, \ref{stablecriticalrv} and Proposition \ref{fastCDI} are proved  in Section 6. \\
	
	We show now how our results apply for explicit branching rates and/or branching mechanisms.
	\begin{example}\label{examples}\
		\begin{enumerate}
			\item [1)] If for any $x\geq 0$, $R(x)=x^{\theta}$ for some $\theta>0$, then a simple change of variable in \eqref{cdicondition} ensures that $\infty$ is an entrance boundary if and only if $\int_{0}\frac{x^{\theta-1}}{\Psi(x)}\ddr x<\infty.$ This condition was established in Li \cite[Theorem 1.10]{2016arXiv160909593L}. Recall that $\int_{0}\frac{1}{\Psi(x)}\ddr x=\infty$ for any $\Psi$ defined as in \eqref{LK}. When $\theta=1$, we recover that (sub)critical CSBPs do not come down from infinity. However when $\theta>1$ and $\gamma>0$, by Theorem \ref{regularCDI}, the process comes down from infinity at speed
			$\varphi^{-1}(t)=[\gamma(\theta-1)t]^{\frac{1}{1-\theta}}.$
			\item [2)] If for any $x\geq 0$, $\Psi(x)=cx^{\alpha}$ for some $c>0$ and $\alpha \in (1,2]$, then $\infty$ is an entrance boundary if and only if $\int^{\infty}\frac{x^{\alpha-1}}{R(x)}\ddr x<\infty,$  see Example 4.17 of \cite{MR3014147} for the scale function for $\Psi$.
			In this case, the nonlinear CSBP $X$ is also a stable jump diffusion (with asymmetric jumps), see D\"oring and Kyprianou \cite{2018arXiv180201672D}.
			We recover the integral test established in  \cite[Theorem 2.2]{2018arXiv180201672D}.
			When for any $x\geq 0$,  $R(x)=x^\theta$ with $\theta>\alpha$, the process comes down from infinity, and by Theorem \ref{stablecriticalrv}, its running infimum  comes down at speed $m^{-1}(t)=[\Gamma(\theta)/\Gamma(\theta-\alpha)t]^{\frac{1}{\alpha-\theta}}.$ See the forthcoming Remark \ref{exactstable} for the explicit form of $m$.
			\item [3)]
			
			If for any $x\geq 0$, $R(x)=e^{\theta x}$ for some $\theta>0$, then for any subcritical or critical branching mechanism $\Psi$, by Lemma \ref{l1}, $\int^{\infty}\frac{\ddr x}{R(x)\Psi(1/x)}<\infty$ and by Theorem \ref{main1}, the process comes down from infinity.  By Proposition \ref{fastCDI}, if $\gamma>0$,  then the speed is $\varphi^{-1}(t)=-\log(\gamma \theta t)/\theta\underset{t\rightarrow 0+}{\sim} -\log (t)/\theta.$  If  $\gamma=0$,  then the running infimum comes down at speed $m^{-1}(t)=-\log(\Psi(\theta) t)/\theta \underset{t\rightarrow 0+}{\sim} -\log (t)/\theta$. See the forthcoming Corollary \ref{p9} for an explicit form of $m$.
		\end{enumerate}
	\end{example}
	An example of nonlinear CSBP with a non-increasing branching rate R is studied at the end of Section 5, see  Example \ref{examplenonincreas}.
	\subsection{Strategy of proofs}
	
	In order to establish our main results, we will follow  the approach developed by Bansaye  et al. in  \cite{2017arXiv171108603B}, \cite{MR3595771} for Kolmogorov diffusions and  birth-death processes. It consists in studying the long-term behavior of $T_b$ under $\mathbb{P}_\infty$ when $b$ goes to $\infty$. The main difference with these works lies in the fact that the process $X$ may have arbitrarily large jumps, which makes its position more involved to follow.
	
	Let $b>0$. Recall  $\tau_b^-:=\inf\{t\geq 0: Z_t<b\}$.  A simple time-change argument gives  \beqlb\label{0.6}
	T_b:=\int^{\tau_b^-}_0 \frac{1}{R(Z_s)}\ddr s.
	\eeqlb
	We see therefore that $T_b$ is a weighted occupation time for the spectrally positive L\'evy process $Z$ with the function $\omega=1/R$. We will apply the known  results on those functionals of L\'evy processes, as well as Theorem \ref{occupation} for studying the moments and the Laplace  transform under $\mathbb{P}_\infty$.
	
	Recall the function $\varphi$ defined in \eqref{phi}. The map $\varphi$ is strictly decreasing and its inverse function $\varphi^{-1}$ is  the solution $(x_t,t\geq 0)$ to the Cauchy problem \eqref{ode} started from $x_0=\infty$. For any $b>0$, $\varphi(b)$ corresponds to the first passage time of $(x_t,t\geq 0)$ below the level $b$ and $\varphi(b)-\varphi(bh)$  is the time needed for $(x_t,t\geq 0)$ to go from the level $bh$ to $b$ for any $b>0$ and $h>1$. Observe that $\mathbb{H}_1$ is equivalent to
	$\underset{h\rightarrow 1+}{\liminf}\ \underset{b\rightarrow \infty}{\limsup}\ \frac{\varphi(b)-\varphi(bh)}{\varphi(b)}=0$.
	Intuitively, the time needed for  $(x_t,t\geq 0)$ to go from $bh$ to $b$ with $h>1$, is  negligible in comparison to the time needed to reach $b$ when started at $\infty$. The condition $\mathbb{H}_1$ requires therefore that the coming down from infinity does not occur too fast.
	
	We shall see that when $\mathbb{H}_1$ is satisfied, the mean time for the process $X$, started from $\infty$, to reach $b$ is equivalent to $\varphi(b)$. Under the conditions $\mathbb{H}_1$ and $\mathbb{H}_3$, the following weak law of large numbers will occur
	\begin{equation}\label{wlln}
	\frac{T_b}{\mathbb{E}_\infty[T_b]}\underset{b\rightarrow \infty}{\longrightarrow} 1 \text{ in } \mathbb{P}_\infty\text{-probability}.
	\end{equation}
	
	The additional condition $\mathbb{H}_2$ on $\varphi$ will allow us to transfer our results on $T_b$ as $b$ goes to $\infty$  to results on the small-time asymptotics under $\mathbb{P}_\infty$, of the running infimum process $(\underline{X}_t,t\geq 0)$, defined by $\underbar{X}_t=\inf_{0\le s\le t} X_s$. In the subcritical case, excursions of $(X_t,t\geq 0)$ above its running infimum are negligible and asymptotics for $(\underline{X}_t,t\geq 0)$ will provide asymptotics for $(X_t,t\geq 0)$.
	
	Getting the  almost-sure convergence in \eqref{wlln} is more involved. The method   requires rather explicit fluctuation identities, which are available for important classes of branching mechanisms, namely those with stable-like behaviors and with exponential moments.
	
	In the critical case  or when $\mathbb{H}_1$ is not satisfied, the convergence in probability \eqref{wlln} will typically not occur. We shall see that for the critical branching mechanisms considered in Theorem \ref{stablecriticalrv},
	$\left({T_b}/{\mathbb{E}_\infty[T_b]}, b\geq 0\right) $ converges in law as $b$ goes to $\infty$.

	\section{First entrance times}\label{firstentrance}
	We prove in this section Theorem \ref{main1} and provide a formula for the variance of the first entrance time $T_b$ under $\mathbb{P}_\infty$. Recall the expression \eqref{0.6} for $T_b$ under $\mathbb{P}_x$ for any $x\in (0,\infty)$.
	\blemma\label{l4}
	For any $x>b>0$ we have
	$\mbb{E}_x [T_b]
	=\int^\infty_{b}\frac{\ddr y}{R(y)} [W(y-b)-W(y-x)].$
	\elemma
	\begin{proof}
		By the time change  we have
		\begin{equation*}
		\begin{split}
		\mbb{E}_x [T_b]&=\mbb{E}_x\Big[\int_0^{\tau_b^-}\frac{\ddr s}{R(Z_s)}\Big]=\int_0^{\infty}\frac{1}{R(y)}\int_0^\infty\mbb{P}_x(Z_s\in \ddr y, s<\tau^-_b)\ddr s\\
		&=\int_0^{\infty}\frac{W(y-b)-W(y-x)}{R(y)}\ddr y,
		\end{split}
		\end{equation*}
		where for the last equality, we apply Kuznetsov et al. \cite[Theorem 2.7 (ii)]{MR3014147} for an expression of the potential measure.
	\end{proof}
	
	\begin{proof}[\noindent \textbf{Proof of Theorem \ref{main1}}]
		We have seen in Lemma \ref{supercritical} that in the supercritical case (for which $\gamma<0$), $\infty$ is not an entrance boundary.  We treat now the case $\gamma\geq 0$. Recall that there is no explosion when $\gamma\geq 0$. Since the scale function is continuous and strictly increasing on $[0,\infty)$, using Lemma \ref{l4} and the monotone convergence theorem, we see that
		\begin{equation}\label{moment1}
		\underset{x\rightarrow \infty}{\lim} \mbb{E}_x [T_b]=\int^\infty_{b}\frac{W(y-b)}{R(y)}\ddr y=m(b).
		\end{equation}
		By Lemma \ref{equivcdi} (b),  $\infty$ is an entrance boundary, namely (\ref{0.5'}) holds, if and only if $m(b)<\infty$ for a large enough $b>0$. By Lemma \ref{l1}, $\frac{W(y)}{W(y-b)}\underset{y\rightarrow \infty}{\longrightarrow} 1$ for any $b>0$ and using the bounds (\ref{boundW}), one sees that the last integral is finite if and only if (\ref{cdicondition}) holds.   Moreover, by applying Proposition \ref{momentunderPinfty} with the function $h(x)=x$, we obtain that for any $b>0$ such that $m(b)<\infty$, one has $m(b)=\mathbb{E}_\infty[T_b]$.
		
		It remains to establish that if there is $c$ such that $m(c)<\infty$ then $m(b)<\infty$ for all $b>0$. Suppose that for a constant $c>0$, $\mbb{E}_\infty (T_c)<\infty$. Then by Lemma \ref{l1}, $\frac{W(y-c)}{W(y-b)}\underset{y\rightarrow \infty}{\longrightarrow} 1$ for $b>0$, and thus, from \eqref{moment1} we see that for any constant $b>0$,  $\mbb{E}_\infty [T_b]<\infty$.
	\end{proof}
	\begin{lemma} Under the assumption  \eqref{cdicondition}, the function $m$ is positive, continuous and strictly decreasing on $(0,\infty)$.
	\end{lemma}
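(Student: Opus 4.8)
The plan is to establish the three properties of $m(b)=\int_b^\infty \frac{W(x-b)}{R(x)}\,\ddr x$ in turn, relying on the integrability condition \eqref{cdicondition} and the basic properties of the scale function $W$ recorded in Lemma \ref{l1}. Positivity is immediate: for every $b>0$ the integrand $\frac{W(x-b)}{R(x)}$ is strictly positive on $(b,\infty)$, since $W$ is strictly increasing with $W(0+)\geq 0$ and $W(x-b)>0$ for $x>b$, while $R>0$ on $(0,\infty)$; hence $m(b)>0$. Finiteness for all $b>0$ was already obtained inside the proof of Theorem \ref{main1} (from \eqref{cdicondition}, using $W(x-b)\sim W(x)$ and the bounds \eqref{boundW}), so I would just invoke that.

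For continuity I would make the change of variable $y=x-b$, writing $m(b)=\int_0^\infty \frac{W(y)}{R(y+b)}\,\ddr y$. On any compact $b\in[\beta_1,\beta_2]\subset(0,\infty)$ the integrand is continuous in $b$ for each fixed $y$ (continuity of $R$), and it is dominated by $\frac{W(y)}{\inf_{b\in[\beta_1,\beta_2]}R(y+b)}$, which is integrable: near $y=0$ it is bounded because $W$ is bounded there and $R$ is bounded away from $0$ on $[\beta_1,\beta_1+1]$, and near $y=\infty$ one uses $W(y)\sim W(y+b)\leq \text{const}/(\text{something})$ together with $R(y+\beta_1)\leq R(y+b)$ if $R$ were monotone — but since $R$ need not be monotone, I would instead bound $\inf_{b\in[\beta_1,\beta_2]}R(y+b)$ from below using continuity of $R$ and the fact that $\{y+b: y\geq y_0, b\in[\beta_1,\beta_2]\}$ is eventually all of $[y_0+\beta_1,\infty)$, so the infimum over the compact $b$-window is, for each large $y$, a minimum of $R$ over an interval of fixed length $\beta_2-\beta_1$; this is still enough for a dominating function of the form $\text{const}\cdot\frac{W(x-\beta_1)}{R(x)}$ up to the integrable error produced by \eqref{boundW} and $W(x-\beta_1)\sim W(x-\beta_2)$. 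Dominated convergence then gives $b\mapsto m(b)$ continuous.

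Strict monotonicity is the step needing the most care, because decreasing $b$ shifts the argument of $W$ up (which increases $W(x-b)$) but also enlarges the domain of integration, while $R$ is not assumed monotone — so one cannot argue pointwise against a fixed integrand in a naive way. The clean route is again the substituted form $m(b)=\int_0^\infty \frac{W(y)}{R(y+b)}\,\ddr y$; here, however, $R(y+b)$ can fail to be monotone in $b$, so even this does not immediately give monotonicity of $m$. Instead I would go back to probability: by \eqref{moment1} and Theorem \ref{main1}, $m(b)=\mathbb{E}_\infty[T_b]$ where $T_b=\inf\{t\geq 0: X_t<b\}$, and for $0<b_1<b_2$ one has $T_{b_1}\geq T_{b_2}$ pathwise under $\mathbb{P}_\infty$ (the process must first pass below $b_2$ before it can pass below $b_1$), hence $m(b_1)\geq m(b_2)$, i.e. $m$ is non-increasing. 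For strictness, note that after time $T_{b_2}$ the process, being at a point $<b_2$ (in fact at $b_2$ or below, and possibly with a jump but still having to traverse $[b_1,b_2)$ since $X$ has no negative jumps so it enters $[0,b_2)$ at level exactly $b_2$ — more precisely $X_{T_{b_2}}=b_2$ unless a jump carries it, but jumps are positive so actually $X_{T_{b_2}}\leq b_2$ and by quasi-left-continuity / no negative jumps $X_{T_{b_2}-}=b_2$), must spend a strictly positive amount of time reaching level $b_1$: by the strong Markov property $m(b_1)-m(b_2)=\mathbb{E}_\infty[\mathbb{E}_{X_{T_{b_2}}}[T_{b_1}]]\geq \mathbb{E}_{b_2}[T_{b_1}]$, and $\mathbb{E}_{b_2}[T_{b_1}]>0$ because starting from $b_2>b_1$ the first passage below $b_1$ cannot be instantaneous (the L\'evy process $Z$ has $\tau^-_{b_1}>0$ a.s. under $\mathbb{P}_{b_2}$, so $T_{b_1}>0$ a.s.). The main obstacle is precisely this last point — handling the starting value $X_{T_{b_2}}$ correctly given that $X$ may jump, so I would lean on the spectral positivity (no negative jumps, hence $X$ reaches $[0,b_2)$ continuously from above, $X_{T_{b_2}}=b_2$) rather than on any monotonicity of $R$. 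Alternatively, one can prove strictness analytically from $m(b)=\int_0^\infty \frac{W(y)}{R(y+b)}\,\ddr y$ by a direct estimate on a small interval where one controls $R$ via its continuity, but the probabilistic argument is shorter and robust.

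\begin{proof}
Throughout, \eqref{cdicondition} is in force. By Theorem \ref{main1}, $m(b)<\infty$ for every $b>0$, and $m(b)=\mathbb{E}_\infty[T_b]$.

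\emph{Positivity.} For fixed $b>0$ the integrand $x\mapsto W(x-b)/R(x)$ is strictly positive on $(b,\infty)$, since $W$ is strictly increasing with $W(x-b)>W(0)\geq 0$ for $x>b$ and $R>0$ on $(0,\infty)$; hence $m(b)>0$.

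\emph{Continuity.} Substituting $y=x-b$ gives $m(b)=\int_0^\infty \frac{W(y)}{R(y+b)}\,\ddr y$. Fix $0<\beta_1<\beta_2<\infty$ and $b\in[\beta_1,\beta_2]$. The integrand is continuous in $b$ for each $y$ by continuity of $R$, and $W(y)/R(y+b)\leq W(y)/\psi(y)$, where $\psi(y):=\min_{b\in[\beta_1,\beta_2]}R(y+b)>0$ (a minimum of the continuous positive $R$ over a compact interval). Near $y=0$, $W$ is bounded and $\psi$ is bounded away from $0$, so the bound is integrable there. Near $y=\infty$, $\psi(y)\geq \min_{u\in[y+\beta_1,\ y+\beta_2]}R(u)$; combining $W(y)\sim W(y+\beta_2)$ with \eqref{boundW} shows that $y\mapsto W(y)/\psi(y)$ is, up to a multiplicative constant, bounded by $W(u-\beta_1)/R(u)$ integrated against the eventual covering of $[\,y_0+\beta_1,\infty)$, which is integrable by \eqref{cdicondition} and $W(u-\beta_1)\sim W(u)$. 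Dominated convergence yields that $m$ is continuous at every $b\in(\beta_1,\beta_2)$, hence on $(0,\infty)$.

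\emph{Strict monotonicity.} Let $0<b_1<b_2$. Since $X$ has no negative jumps, under $\mathbb{P}_\infty$ the event $\{T_{b_1}\leq t\}$ is contained in $\{T_{b_2}\leq t\}$ for every $t$, so $T_{b_1}\geq T_{b_2}$ a.s., and consequently $m(b_1)=\mathbb{E}_\infty[T_{b_1}]\geq \mathbb{E}_\infty[T_{b_2}]=m(b_2)$. For strictness, apply the strong Markov property at $T_{b_2}$: because $X$ has no negative jumps, it enters $[0,b_2)$ continuously from above, so $X_{T_{b_2}}=b_2$ a.s. on $\{T_{b_2}<\infty\}$, and $T_{b_2}<\infty$ a.s. (no explosion and $\tau_0^-<\infty$ for $Z$). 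Hence
\[
m(b_1)-m(b_2)=\mathbb{E}_\infty\big[T_{b_1}-T_{b_2}\big]=\mathbb{E}_\infty\big[\mathbb{E}_{X_{T_{b_2}}}[T_{b_1}]\big]=\mathbb{E}_{b_2}[T_{b_1}].
\]
Starting from $b_2>b_1$, in the notation \eqref{0.6} one has $T_{b_1}=\int_0^{\tau_{b_1}^-}\ddr s/R(Z_s)$ with $\tau_{b_1}^->0$ $\mathbb{P}_{b_2}$-a.s., so $T_{b_1}>0$ a.s. and thus $\mathbb{E}_{b_2}[T_{b_1}]>0$. Therefore $m(b_1)>m(b_2)$, i.e. $m$ is strictly decreasing on $(0,\infty)$.
\end{proof}
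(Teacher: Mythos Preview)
Your arguments for positivity and strict monotonicity are correct and essentially identical to the paper's: both invoke the strong Markov property at $T_{b_2}$ together with the absence of negative jumps to obtain $m(b_1)-m(b_2)=\mathbb{E}_{b_2}[T_{b_1}]>0$.

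The continuity argument, however, has a genuine gap. After the substitution $y=x-b$ you need $\int_0^\infty W(y)/\psi(y)\,\ddr y<\infty$ with $\psi(y)=\min_{u\in[y+\beta_1,y+\beta_2]}R(u)$, and this can fail. Nothing in \eqref{cdicondition} prevents $R$ from having narrow deep valleys: for instance, with $\Psi(\lambda)=\lambda^2$ (so $W(x)=x$), take $R(x)=x^3$ except on intervals $[n,n+n^{-5}]$ where $R$ dips continuously to $n^{-2}$. Then $\int^\infty W(x)/R(x)\,\ddr x<\infty$ (the valley contributions are of order $n\cdot n^{2}\cdot n^{-5}=n^{-2}$, summable), so \eqref{cdicondition} holds; but for each $y$ in an interval of length roughly $\beta_2-\beta_1$ near $n-\beta_1$ the window $[y+\beta_1,y+\beta_2]$ meets the valley, giving $\psi(y)\leq n^{-2}$, and hence $\int^\infty W(y)/\psi(y)\,\ddr y\geq c\sum_n n\cdot n^{2}=\infty$. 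The phrase ``integrated against the eventual covering'' does not repair this: a pointwise maximum over a window is not controlled by an average over that window.

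The fix, which is what the paper does, is to avoid the substitution. For any $0<a<b$ write $m(b)=\int_a^\infty \frac{W(x-b)}{R(x)}\,\ddr x$ (the integrand vanishes on $(a,b)$ since $W\equiv 0$ on $(-\infty,0)$) and use the monotonicity of $W$ to bound $W(x-b)\leq W(x)$. The dominating function $W(x)/R(x)$ is independent of $b$ and integrable on $(a,\infty)$: near infinity by \eqref{cdicondition} and \eqref{boundW}, near $a$ by continuity of $W$ and positivity of $R$. Dominated convergence then gives continuity of $m$ on $(a,\infty)$ for every $a>0$, hence on all of $(0,\infty)$.
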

	\begin{proof}
		By the strong Markov property and the absence of negative jumps, for any $b<a$, $m(b)-m(a)=\mathbb{E}_{\infty}[T_b-T_{a}]=\mathbb{E}_{a}[T_b]>0$
		and $m$ is strictly decreasing. Let $a>0$ such that $x\mapsto \frac{W(x)}{R(x)}$ is integrable on $(a,\infty)$. For any $x>a$, $\frac{W(x-b)}{R(x)}\leq \frac{W(x)}{R(x)}$. Since the map $b\mapsto \frac{W(x-b)}{R(x)}$ is continuous,  by continuity under the integral sign, the map $m$ is continuous on $(a,\infty)$.
	\end{proof}
	%
	%
	We now study the Laplace transform of the first entrance time $T_b$. Define recursively the positive functions $(W_n,n\geq 0)$ by
	\begin{equation}\label{Wn}
	W_0(x)=1,\quad W_{n+1}(x)=\int_{x}^{\infty}\frac{W(z-x)}{R(z)}W_n(z)\ddr z,\qquad n\ge0, x\geq 0.
	\end{equation}
	Note that  $W_1(b)=m(b)$ for any $b>0$.
	\begin{theorem}\label{c7}  If there is $b\geq 0$ such that $m(b)<\infty$, then for any $0<\lambda<1/m(b)$, $\sum^\infty_{n=0} \lambda^n W_{n}(b)<\infty$ and for any $x>b$, we have
		\beqlb\label{c10}\mbb{E}_x\left[e^{-\lambda T_b}\right]=\frac{\sum^\infty_{n=0}\lambda^n W_{n}(x)}{\sum^\infty_{n=0} \lambda^n W_{n}(b)}.\eeqlb
		In addition,
		\begin{equation}\label{Wninf}
		\mbb{E}_\infty \left[e^{-\lambda T_b}\right]=\frac{1}{\sum^\infty_{n=0} \lambda^n W_{n}(b)}.
		\end{equation}
	\end{theorem}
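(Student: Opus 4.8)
The statement is essentially a specialization of the weighted-occupation-time formula in Theorem~\ref{occupation} to the weight $\omega = 1/R$, combined with a power-series identity to handle the Laplace parameter $\lambda$. Recall from \eqref{0.6} that under $\mathbb{P}_x$ one has $T_b = \int_0^{\tau_b^-} \frac{1}{R(Z_s)}\,\ddr s$, so $\mathbb{E}_x[e^{-\lambda T_b}] = \mathbb{E}_x\big[\exp(-\int_0^{\tau_b^-}\frac{\lambda}{R(Z_s)}\,\ddr s)\big]$. The first task is to verify that for $0 < \lambda < 1/m(b)$ the series $\sum_{n\ge 0}\lambda^n W_n(b)$ converges, where $W_n$ is defined by the recursion \eqref{Wn}. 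The plan is to show by induction that $W_n(b) \le m(b)^n$ for all $n \ge 0$: indeed $W_0(b) = 1$, and assuming $W_n(z) \le m(z)^n$ for all $z$, one gets
\[
W_{n+1}(b) = \int_b^\infty \frac{W(z-b)}{R(z)} W_n(z)\,\ddr z \le \int_b^\infty \frac{W(z-b)}{R(z)} m(z)^n\,\ddr z.
\]
Since $m$ is decreasing (by the preceding lemma), for $z \ge b$ we have $m(z) \le m(b)$, so the right-hand side is at most $m(b)^n \int_b^\infty \frac{W(z-b)}{R(z)}\,\ddr z = m(b)^n \cdot m(b) = m(b)^{n+1}$, completing the induction. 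Consequently $\sum_n \lambda^n W_n(b) \le \sum_n (\lambda m(b))^n < \infty$ whenever $\lambda m(b) < 1$, and the same bound with $x$ in place of $b$ (note $W_n(x) \le m(x)^n \le m(b)^n$ for $x > b$) gives absolute convergence of $\sum_n \lambda^n W_n(x)$ as well.

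\textbf{Applying Theorem~\ref{occupation}.} With $\omega = \lambda/R$ (which is locally bounded and nonnegative since $R$ is continuous and strictly positive on $(0,\infty)$), the objects $W_n^{(\omega)}$ of \eqref{occupation_a} satisfy $W_n^{(\omega)}(x) = \lambda^n W_n(x)$ — this follows by an immediate induction, since $W_0^{(\omega)} = 1 = W_0$ and
\[
W_{n+1}^{(\omega)}(x) = \int_x^\infty W(z-x)\,\frac{\lambda}{R(z)}\,\lambda^n W_n(z)\,\ddr z = \lambda^{n+1}\int_x^\infty \frac{W(z-x)}{R(z)}W_n(z)\,\ddr z = \lambda^{n+1}W_{n+1}(x).
\]
By the previous paragraph $\sum_n W_n^{(\omega)}(b) = \sum_n \lambda^n W_n(b) < \infty$, so Theorem~\ref{occupation} applies and yields, for $x \ge b$,
\[
\mathbb{E}_x\Big[\exp\Big(-\int_0^{\tau_b^-}\frac{\lambda}{R(Z_s)}\,\ddr s\Big);\,\tau_b^- < \infty\Big] = \frac{\sum_n \lambda^n W_n(x)}{\sum_n \lambda^n W_n(b)}.
\]
Since $\gamma \ge 0$ forces $\tau_b^- < \infty$ $\mathbb{P}_x$-a.s. (as recalled in the excerpt, $\mathbb{P}_x(\tau_0^- < \infty) = 1$, and likewise for any level below $x$), the indicator is redundant and the left side equals $\mathbb{E}_x[e^{-\lambda T_b}]$, giving \eqref{c10}.

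\textbf{The limit $x \to \infty$.} For \eqref{Wninf} the plan is to let $x \to \infty$ in \eqref{c10}. The function $h(t) = e^{-\lambda t}$ is bounded and continuous but decreasing, so Proposition~\ref{momentunderPinfty}(b) does not apply directly; instead I would use Proposition~\ref{momentunderPinfty}(a): for $b$ large enough, $\mathbb{E}_\infty[e^{\theta T_b}] < \infty$ for some $\theta > 0$, and more to the point, since the convergence $\mathbb{E}_x[h(T_b)] \to \mathbb{E}_\infty[h(T_b)]$ holds for $h(t) = t^k$ (moments), one can bound $|e^{-\lambda t} - (1 - \lambda t + \cdots + (-\lambda t)^k/k!)|$ uniformly and pass to the limit term by term; alternatively, apply Proposition~\ref{momentunderPinfty}(b) to $h(t) = 1 - e^{-\lambda t}$ (nonnegative, continuous, increasing), which gives $\mathbb{E}_x[1 - e^{-\lambda T_b}] \to \mathbb{E}_\infty[1 - e^{-\lambda T_b}]$, hence $\mathbb{E}_x[e^{-\lambda T_b}] \to \mathbb{E}_\infty[e^{-\lambda T_b}]$; the hypotheses of (b) are met because $T_b < \infty$ $\mathbb{P}_\infty$-a.s.\ and $\mathbb{E}_\infty[1 - e^{-\lambda T_b}] \le 1 < \infty$. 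On the right side of \eqref{c10}, using $W_n(x) \le m(b)^n$ for $x > b$ together with $\lambda m(b) < 1$, dominated convergence gives $\sum_n \lambda^n W_n(x) \to \sum_n \lambda^n \lim_{x\to\infty} W_n(x)$; and one checks by induction, using $W_n(x) \le m(x)^n \to 0$ for $n \ge 1$ (since $m(x) \to 0$ as $x \to \infty$ under \eqref{cdicondition}), that $\lim_{x\to\infty}W_n(x) = 0$ for every $n \ge 1$ while $W_0 \equiv 1$. Hence the numerator tends to $1$, yielding \eqref{Wninf}. The main obstacle is the passage to the limit in $x$ for the Laplace transform; the device of applying Proposition~\ref{momentunderPinfty}(b) to $1 - e^{-\lambda t}$ circumvents the monotonicity issue cleanly, and the limit of the series on the right is then a routine dominated-convergence argument given the uniform bound $W_n(x) \le m(b)^n$.
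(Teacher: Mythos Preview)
Your proof is correct and follows essentially the same strategy as the paper: the geometric bound $W_n(x)\le m(x)^n$ (the paper writes it as $W_n(x)\le W_1(x)^n$, which is the same thing) to get convergence of the series, followed by Theorem~\ref{occupation} with weight $\lambda/R$, and finally the limit $x\to\infty$. Your treatment of the limit step is in fact more careful than the paper's, which simply records $\lim_{x\to\infty}W_n(x)=0$ without explicitly justifying $\mathbb{E}_x[e^{-\lambda T_b}]\to\mathbb{E}_\infty[e^{-\lambda T_b}]$; your device of applying Proposition~\ref{momentunderPinfty}(b) to $h(t)=1-e^{-\lambda t}$ handles this cleanly.
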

	%
	\begin{proof}
		We show that $\sum_{n\geq 0}\lambda^n W_n(b)<\infty $ for $0<\lambda<\frac{1}{W_{1}(b)}$. Recall from Lemma \ref{l1} that $x\mapsto W(x)$ is nondecreasing. By induction we can show that for any $n$, the function $x\in [0,\infty)\mapsto W_n(x)$ is decreasing. Then
		\begin{align*}
		W_{n+1}(x)&=\int_{x}^{\infty}\frac{W(z-x)}{R(z)}W_n(z)\ddr z
		\leq W_n(x)W_1(x)
		\end{align*}
		so that $W_n(x)\leq W_1(x)^{n}$ for any $x\geq 0$. Since
		$W_1(x)\underset{x\rightarrow \infty}{\longrightarrow} 0$,  for any $\lambda>0$, there exists $b>0$ such that $\lambda<\frac{1}{W_{1}(b)}$. For any $x\geq b$, \[\sum_{n\geq 0}\lambda^{n}W_n(x)\leq \sum_{n\geq 0}\lambda^{n}W_n(b)\leq \sum_{n\geq 0}(\lambda W_1(b))^{n}<\infty.\]
		By applying Theorem~\ref{occupation} with $\omega(x)=\frac{1}{R(x)}$, and noticing that $\lim_{x\to\infty}W_n(x)=0$, $n\ge 1$, we obtain the series representation (\ref{c10}).
	\end{proof}
	According to Theorem \ref{occupation}, the formula for the Laplace transform \eqref{c10} holds true more generally at any $\lambda>0$ such that the series $\sum_{n\geq 1}\lambda^{n}W_n(b)$ converges.
	
	We compute explicitly the functions $(W_n,n\geq 1)$ when $R(x)=e^{\theta x}$. They were first obtained by Patie in \cite{MR2548498} in the theory of PSSMPs.
	\begin{corol}\label{p9}
		Suppose that $R(x)=e^{\theta x}$ for a given $\theta>0$ and $\gamma\geq 0$. Then for any $b\geq 0$, $m(b)=\frac{e^{-\theta b}}{\Psi(\theta)}<\infty$ and for any $x\geq 0$,
		\begin{equation}\label{Patie}
		W_n(x):=\frac{e^{-n\theta x}}{\prod^n_{j=1}\Psi(j \theta)},\qquad \text{for} \,\, n\ge1.
		\end{equation}
		Further, \eqref{c10} and \eqref{Wninf} hold true  for any $\lambda>0$.
	\end{corol}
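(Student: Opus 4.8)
The plan is to verify that the functions $W_n$ defined by the right-hand side of \eqref{Patie} satisfy the recursion \eqref{Wn} with $R(z)=e^{\theta z}$, and then invoke Theorem \ref{c7}. First I would compute $m(b)=W_1(b)$ directly: by \eqref{m} and Lemma \ref{l1},
\begin{equation*}
m(b)=\int_b^\infty e^{-\theta z}W(z-b)\,\ddr z=e^{-\theta b}\int_0^\infty e^{-\theta u}W(u)\,\ddr u=\frac{e^{-\theta b}}{\Psi(\theta)},
\end{equation*}
using the substitution $u=z-b$, the fact that $W$ vanishes on $(-\infty,0)$, and the defining identity \eqref{defscale} with $q=\theta>0$. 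In particular $m(b)<\infty$ for every $b\geq 0$, which is exactly \eqref{cdicondition} here, so Theorem \ref{c7} is applicable.

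Next I would prove \eqref{Patie} by induction on $n$. The base case $n=1$ is the computation just performed. For the inductive step, assume $W_n(z)=e^{-n\theta z}/\prod_{j=1}^n\Psi(j\theta)$. Then by \eqref{Wn},
\begin{equation*}
W_{n+1}(x)=\int_x^\infty \frac{W(z-x)}{e^{\theta z}}\cdot\frac{e^{-n\theta z}}{\prod_{j=1}^n\Psi(j\theta)}\,\ddr z
=\frac{1}{\prod_{j=1}^n\Psi(j\theta)}\int_x^\infty e^{-(n+1)\theta z}W(z-x)\,\ddr z.
\end{equation*}
The substitution $u=z-x$ turns the integral into $e^{-(n+1)\theta x}\int_0^\infty e^{-(n+1)\theta u}W(u)\,\ddr u=e^{-(n+1)\theta x}/\Psi((n+1)\theta)$, again by \eqref{defscale}, giving $W_{n+1}(x)=e^{-(n+1)\theta x}/\prod_{j=1}^{n+1}\Psi(j\theta)$, which closes the induction.

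Finally I would address the extension of \eqref{c10} and \eqref{Wninf} to all $\lambda>0$. From \eqref{Patie} one has $W_n(b)=e^{-n\theta b}/\prod_{j=1}^n\Psi(j\theta)$, and since $\Psi(j\theta)\to\infty$ as $j\to\infty$ (indeed $\Psi$ is convex, nondecreasing on $[0,\infty)$ and $\Psi(\lambda)\to\infty$), the general term $\lambda^n W_n(b)$ decays super-geometrically for any fixed $\lambda>0$, so the series $\sum_{n\geq 0}\lambda^n W_n(b)$ converges for every $\lambda>0$ and every $b\geq 0$. By the remark following the proof of Theorem \ref{c7} (that \eqref{c10} holds whenever the relevant series converges), the formulas \eqref{c10} and \eqref{Wninf} are valid for all $\lambda>0$. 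The only point requiring a little care is confirming that the hypotheses of Theorem \ref{occupation}/Theorem \ref{c7} — in particular $W_n(x)\to 0$ as $x\to\infty$ for $n\geq 1$ — hold here, but this is immediate from the explicit exponential form; I expect the whole argument to be routine, with no genuine obstacle.
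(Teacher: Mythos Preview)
Your proposal is correct and follows essentially the same approach as the paper: compute $W_1$ via the substitution $u=z-b$ and the scale function identity \eqref{defscale}, prove \eqref{Patie} by induction using the same substitution, and deduce convergence of $\sum_n \lambda^n W_n(b)$ for all $\lambda>0$ from the growth of $\Psi(j\theta)$ (the paper phrases this last point as $W_{n+1}(x)/W_n(x)\to 0$, i.e.\ the ratio test, which is the same observation).
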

	\begin{proof}
		We are going to prove it by induction. By Lemma \ref{l1}, for any $x\geq 0$, $m(x)=W_1(x)=\int_{0}^{\infty}\frac{W(y)}{e^{\theta(y+x)}}\ddr y=\frac{e^{-\theta x}}{\Psi(\theta)}<\infty$.
		Suppose that \eqref{Patie} holds for $n=m$. Then for $n=m+1$ we have
		\beqnn
		W_{m+1}(x)
		\ar=\ar
		\int^\infty_x\frac{W(z-x)}{e^{\theta z}}W_m(z)\ddr z=
		\int^\infty_0\frac{W(z)}{e^{\theta(z+x)}}W_m(z+x)\ddr z\cr
		\ar=\ar
		\frac{1}{\prod^m_{j=1}\Psi(j \theta)}\int^\infty_0W(z)e^{-(m+1)\theta (z+x)}\ddr z
		=
		\frac{e^{-(m+1)\theta x}}{\prod^{m+1}_{j=1}\Psi(j \theta)},\cr
		\eeqnn
		where we used (\ref{defscale}) in the third equality.
		The formula is obtained for any $n$ by induction.
		
		One readily checks that for any $x\geq 0$, $\frac{W_{n+1}(x)}{W_{n}(x)}\underset{n\rightarrow \infty}{\longrightarrow} 0$, which ensures that the entire series with coefficients \eqref{Patie} has an infinite radius of convergence.
	\end{proof}

	We see in the next proposition that in the subcritical case the series \eqref{c10} always converges.
	\begin{prop}\label{all-lambda}
		Suppose that $\gamma>0$ and $\int^{\infty}\frac{\ddr x}{R(x)}<\infty$. Then, for all $b>0$, for any $\lambda>0$, $\sum_{n=0}^\infty \lambda^n W_n(b)<\infty$.
	\end{prop}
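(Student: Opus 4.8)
The plan is to upgrade the crude estimate $W_n(b)\le W_1(b)^n$ used in the proof of Theorem \ref{c7}, which only gives a finite radius of convergence, to a bound carrying an extra factorial factor. The key structural fact is that in the subcritical regime the scale function is \emph{bounded}: by Lemma \ref{l1}, $W$ is increasing with $W(\infty)=1/\gamma<\infty$, so $W(y)\le 1/\gamma$ for all $y\ge 0$. Also, since $\int^{\infty}\frac{\ddr x}{R(x)}<\infty$, condition \eqref{cdicondition} holds (Remark after Theorem \ref{main1}), hence all the $W_n(b)$ are finite to start with. Set
\[
F(x):=\int_x^\infty \frac{\ddr z}{R(z)},\qquad x>0,
\]
which is finite, non-increasing, tends to $0$ as $x\to\infty$, and is $C^1$ on $(0,\infty)$ with $F'(x)=-1/R(x)$ because $1/R$ is continuous there.

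Fix $b>0$. Using $W\le 1/\gamma$ in the recursion \eqref{Wn} gives, for $x\ge b$,
\[
W_{n+1}(x)=\int_x^\infty \frac{W(z-x)}{R(z)}\,W_n(z)\,\ddr z\ \le\ \frac{1}{\gamma}\int_x^\infty \frac{W_n(z)}{R(z)}\,\ddr z .
\]
I would then prove by induction on $n$ that
\[
W_n(x)\ \le\ \frac{1}{n!}\left(\frac{F(x)}{\gamma}\right)^{n}\qquad\text{for all } x\ge b,\ n\ge 0.
\]
The cases $n=0,1$ are immediate. For the inductive step, insert the bound for $W_n$ into the displayed inequality and observe the exact-differential identity
\[
\int_x^\infty F(z)^{n}\,\frac{\ddr z}{R(z)}=\int_x^\infty F(z)^{n}\bigl(-F'(z)\bigr)\,\ddr z=\frac{F(x)^{n+1}}{n+1},
\]
using $F(\infty)=0$; this produces precisely the factor $\tfrac{1}{(n+1)!}$.

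Granting the bound, for any $\lambda>0$,
\[
\sum_{n=0}^{\infty}\lambda^n W_n(b)\ \le\ \sum_{n=0}^{\infty}\frac{1}{n!}\left(\frac{\lambda F(b)}{\gamma}\right)^{n}=\exp\!\left(\frac{\lambda F(b)}{\gamma}\right)<\infty,
\]
which is the assertion; as a byproduct, via Theorem \ref{c7}, $\mathbb{E}_\infty[e^{\lambda T_b}]=\bigl(\sum_n(-\lambda)^nW_n(b)\bigr)^{-1}$ is finite for $\lambda$ in a neighbourhood of $0$. The main (and essentially only) obstacle is conceptual rather than technical: one must resist bounding $W_{n+1}$ by $W_1\,W_n$ pointwise — that loses the factorial — and instead keep the integral $\int_x^\infty \frac{W_n(z)}{R(z)}\,\ddr z$ intact, feeding in the sharp bound $W_n(z)\le (F(z)/\gamma)^n/n!$ and integrating the resulting perfect differential $F^n(-F')$. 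The boundedness of $W$, which fails in the critical case $\gamma=0$, is what makes this possible and explains why the statement is restricted to $\gamma>0$.
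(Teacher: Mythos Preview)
Your proof is correct and is essentially identical to the paper's own argument: the paper also bounds $W\le 1/\gamma$ and proves by induction that $W_n(b)\le \varphi(b)^n/n!$ via the identity $\int_b^\infty \varphi(z)^n/(\gamma R(z))\,\ddr z=\varphi(b)^{n+1}/(n+1)$, which is exactly your perfect-differential computation with $F=\gamma\varphi$. The only difference is notational.
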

	\begin{proof}
		Recall the function $\varphi$, defined in \eqref{phi}, as $\varphi(z)=\frac{1}{\gamma}\int_{z}^{\infty}\frac{\ddr u}{R(u)}$.  We show by induction and (\ref{Wn}) that for all $n\geq 1$ and all $b>0$, $W_n(b)\leq \frac{\varphi(b)^n}{n!}$. Since for any $x\geq 0$, $W(x)\leq W(\infty)=\frac{1}{\gamma}$, one clearly has $W_1(b)=\int_{b}^{\infty}\frac{W(x-b)}{R(x)}\ddr x\leq \varphi(b)$.
		Assume that for any $b>0$, $W_n(b)\leq \frac{\varphi(b)^n}{n!}$. The recursion formula
		\eqref{Wn} entails
		\[W_{n+1}(b)=\int_{b}^{\infty}\frac{W(z-b)}{R(z)}W_n(z)\ddr z\leq \frac{1}{n!}\int_{b}^{\infty}\frac{1}{\gamma R(z)}\varphi(z)^n \ddr z. \]
		Since $\varphi'(z)=-\frac{1}{\gamma R(z)}$, then
		$\int_{b}^{\infty}\frac{\varphi(z)^n}{\gamma R(z)}\ddr z=\frac{\varphi(b)^{n+1}}{n+1}$
		and $W_{n+1}(b)\leq \frac{\varphi(b)^{n+1}}{(n+1)!}$. The result follows by induction.
	\end{proof}
	\blemma\label{l6} Suppose that \eqref{cdicondition} holds. Then for any $x\geq b>0$,
	$\mbb{E}_x[T_b^2]=2[W_2(x)-W_2(b)+W_1(b)(W_1(b)-W_1(x))]$,
	which can also be  written as
	\beqnn
	\mbb{E}_x[T_b^2]=2\int^\infty_b\frac{\ddr u}{R(u)}
	\int^\infty_b\frac{\ddr z}{R(z)}[W(z-b)-W(z-x)][W(u-b)-W(u-z)].
	\eeqnn
	In particular, we have
	\beqnn
	\mbb{E}_\infty[T_b^2]=2W_1(b)^2-2W_2(b)
	\ar=\ar
	2\left[\int^\infty_{b}\frac{1}{R(y)}W(y-b)\ddr y\right]^2\cr
	\ar\ar
	-2\int^\infty_{b}\frac{1}{R(x)}\ddr x\int^\infty_{b}\frac{1}{R(y)}W(y-b)W(x-y)\ddr y.
	\eeqnn
	\elemma
	\begin{proof}
		We have seen in the proof of Theorem \ref{c7}, that for any {$x\ge 0$}, $W_n(x)\leq W_1(x)^{n}$ for all $n\geq 0$. Hence, for any $0\leq \lambda<\frac{1}{m(b)}$, $\sum_{n=2}^{\infty}n(n-1)\lambda^{n-2}W_n(x)<\infty$. For any $x>b$ and  $0\le\lambda<1/m(b)$, set $f_x(\lambda):=\sum_{n=0}^{\infty}\lambda^{n}W_n(x)$ and note that $\mathbb{E}_x[e^{-\lambda T_b}]=\frac{f_x(\lambda)}{f_b(\lambda)}$.  The function $f_x$ is twice differentiable and satisfies $f_x(0)=1$, $f'_x(0)=W_1(x)$ and $f''_x(0)=2W_2(x)$. Simple computations provide  that for $0\le\lambda<1/m(b),$
		\begin{equation*}
		\begin{split}
		\frac{\ddr^2 }{\ddr \lambda^2}\mathbb{E}_x[e^{-\lambda T_b}]=
		&(f''_x(\lambda)f_b(\lambda)-f_x(\lambda)f''_b(\lambda))f_b(\lambda)^{-2}\\
		&-2(f'_x(\lambda)f_b(\lambda)-f_x(\lambda)f_b'(\lambda))f_b(\lambda)^{-3}f'_b(\lambda)
		\end{split}
		\end{equation*}
		and we get with $\lambda=0$, $\mathbb{E}_x[T_b^2]=2[W_2(x)-W_2(b)+W_1(b)(W_1(b)-W_1(x))]$.
		Let $I_1:=W_2(x)-W_2(b)$ and let $I_2:=W_1(b)(W_1(b)-W_1(x))$.
		Recall $W_1$ and $W_2$ defined in (\ref{Wn}) and $W_1(b)=m(b).$
		Noticing the fact that $W(x)=0$ for $x<0$, we see that
		\beqnn
		I_1\ar=\ar\int^\infty_b\frac{W(z-x)}{R(z)}\int^\infty_z\frac{W(u-z)}{R(u)}\ddr u\ddr z
		-\int^\infty_b\frac{W(z-b)}{R(z)}\int^\infty_z\frac{W(u-z)}{R(u)}\ddr u\ddr z\cr
		\ar=\ar
		\int^\infty_b\frac{\ddr u}{R(u)}\int^u_b\frac{W(z-x)W(u-z)}{R(z)}\ddr z
		-\int^\infty_b\frac{\ddr u}{R(u)}\int^u_b\frac{W(z-b)W(u-z)}{R(z)}\ddr z\cr
		\ar=\ar
		-\int^\infty_b\frac{\ddr u}{R(u)}\int^\infty_b \frac{\ddr z}{R(z)}[W(z-b)-W(z-x)]W(u-z)
		\eeqnn
		and
		$I_2=\int^\infty_b\frac{\ddr u}{R(u)}W(u-b)\int^\infty_b\frac{\ddr z}{R(z)}[W(z-b)-W(z-x)]$.
		Therefore,
		\beqnn
		\mbb{E}_x [T_b^2]=2I_1+2I_2=2\int^\infty_b\frac{\ddr u}{R(u)}
		\int^\infty_b\frac{\ddr z}{R(z)}[W(z-b)-W(z-x)][W(u-b)-W(u-z)].
		\eeqnn
		Finally, we let $x\to\infty$. Then by the fact that $\lim_{x\to0+} W(x)=0$ decreasingly and Proposition \ref{momentunderPinfty}  (b), we obtain the desired result.
	\end{proof}
	\bcorollary\label{c4} Suppose that \eqref{cdicondition} holds.
	For any $b>0$,
	$\text{Var}_\infty(T_b)=W_1(b)^2-2W_2(b)$
	which can also be  written as
	\beqlb\label{1.4}
	\text{Var}_\infty(T_b)=2\int^\infty_{b}\frac{1}{R(x)} W(x-b)\ddr x\int^\infty_{x}\frac{1}{R(y)}[W(y-b)-W(y-x)]\ddr y.
	\eeqlb
	\ecorollary
	\begin{proof}
		By Lemmas~\ref{l4} and~\ref{l6},
		\beqnn
		\text{Var}_\infty(T_b)\ar=\ar{\mbb{E}_\infty[T_b^2]-(\mbb{E}_\infty [T_b])^2} \cr
		\ar=\ar
		\int^\infty_{b}\frac{1}{R(x)}\ddr x\int^\infty_{b}\frac{1}{R(y)}W(y-b)W(x-b)\ddr y\cr
		\ar\ar
		-2\int^\infty_{b}\frac{1}{R(x)}\ddr x\int^\infty_{b}\frac{1}{R(y)}W(y-b)W(x-y)\ddr y\cr
		\ar=\ar
		2\int^\infty_{b}\frac{1}{R(x)}\ddr x\int^\infty_{x}\frac{1}{R(y)}W(y-b)W(x-b)\ddr y\cr
		\ar\ar
		-2\int^\infty_{b}\frac{1}{R(x)}\ddr x\int^x_{b}\frac{1}{R(y)}W(y-b)W(x-y)\ddr y.
		\eeqnn
		Changing the order of integrals we have
		\beqnn
		\text{Var}_\infty(T_b)
		\ar=\ar
		2\int^\infty_{b}\frac{1}{R(x)}\ddr x\int^\infty_{x}\frac{1}{R(y)}W(x-b)W(y-b)\ddr y\cr
		\ar\ar
		-2\int^\infty_{b}\frac{1}{R(x)}\ddr x\int^\infty_{x}\frac{1}{R(y)}W(x-b)W(y-x)\ddr y\cr
		\ar=\ar
		2\int^\infty_{b}\frac{1}{R(x)} W(x-b)\ddr x\int^\infty_{x}\frac{1}{R(y)}[W(y-b)-W(y-x)]\ddr y.
		\eeqnn
	\end{proof}

	\section{Asymptotic behaviors of hitting times}\label{asymptoticsTb}
	In this section, we study the convergence of  $(T_b/\mathbb{E}_\infty[T_b],b\geq 0)$ as $b\rightarrow\infty$. 
	By applying Theorem \ref{c7}, we first find conditions on $R$ for a convergence in law to hold.
	\subsection{Convergence in law}

	\begin{corol}\label{exponen_conve}
		Suppose that  $R(x)= e^{\theta_2 x}g(x)$ for {$x>0$ and} some constant $\theta_2>0$ and function $g$ is  regularly varying at $\infty$. Then \eqref{cdicondition} is satisfied and $m(b)\sim \frac{1}{\Psi(\theta_2) R(b)}$ as $b$ goes to $\infty$. Moreover, for any $\lambda\geq 0$,
		$$
		\lim_{b\to\infty}\mbb{E}_\infty\left[e^{-\lambda\frac{T_b}{m(b)}}\right]=\Big({1+\sum_{n=1}^\infty  \frac{\Psi(\theta_2)^{n}\lambda^n}{\prod_{j=1}^{n}\Psi(j\theta_2)}}\Big)^{-1}.
		$$
	\end{corol}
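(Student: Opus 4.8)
The plan is to work entirely with the recursive functions $W_n$ of \eqref{Wn} and the representation \eqref{Wninf}, treating $R=e^{\theta_2\,\cdot}\,g$ as a perturbation of the pure exponential case handled in Corollary~\ref{p9}. Write $\rho\in\mathbb{R}$ for the index of regular variation of $g$, set $u_n(b):=W_n(b)R(b)^n$, and — anticipating the answer — put $c_n:=\Psi(\theta_2)^n/\prod_{j=1}^n\Psi(j\theta_2)$ and $G(\lambda):=\sum_{n\ge0}c_n\lambda^n$, so that the claimed limit is $1/G(\lambda)$. Note already that $c_{n+1}/c_n=\Psi(\theta_2)/\Psi((n+1)\theta_2)\to0$ by convexity of $\Psi$ (with $\Psi(0)=0$), so $G$ is entire and $G\ge G(0)=1$ on $[0,\infty)$, making $1/G$ real analytic on $[0,\infty)$.

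First I would check \eqref{cdicondition}. By \eqref{boundW} and Lemma~\ref{l1}, $1/\bigl(x\Psi(1/x)\bigr)\le c_1^{-1}W(x)=o(e^{\theta_2 x/2})$, while $1/g$ is regularly varying and hence $o(x^{|\rho|+1})$; thus the integrand in \eqref{cdicondition} is $O\bigl(x^{|\rho|+1}e^{-\theta_2 x/2}\bigr)$, integrable at $\infty$. Theorem~\ref{main1} then yields that $\infty$ is an instantaneous entrance boundary and $m(b)=\mathbb{E}_\infty[T_b]<\infty$ for all $b>0$. For the asymptotics of $m$, write $m(b)=\int_0^\infty W(y)R(b+y)^{-1}\,\ddr y=R(b)^{-1}\int_0^\infty W(y)e^{-\theta_2 y}\,\frac{g(b)}{g(b+y)}\,\ddr y$. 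Since $g(b+y)/g(b)\to1$ for each fixed $y$ by regular variation, and $g(b)/g(b+y)\le 2(1+y)^{|\rho|+1}$ for all large $b$ by Potter's bounds, dominated convergence (the estimate $W(y)=o(e^{\theta_2 y/2})$ makes $W(y)e^{-\theta_2 y}(1+y)^{|\rho|+1}$ integrable) together with \eqref{defscale} at $q=\theta_2$ gives $m(b)R(b)\to1/\Psi(\theta_2)$, i.e. $m(b)\sim 1/\bigl(\Psi(\theta_2)R(b)\bigr)$.

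The key step is the termwise asymptotics $W_n(b)/m(b)^n\to c_n$. Substituting $z=b+y$ in \eqref{Wn} and multiplying by $R(b)^{n+1}$ turns the recursion into $u_{n+1}(b)=\int_0^\infty W(y)\,u_n(b+y)\,e^{-(n+1)\theta_2 y}\bigl(g(b)/g(b+y)\bigr)^{n+1}\,\ddr y$. I would prove by induction on $n$ that $u_n(b)\to d_n:=\bigl(\prod_{j=1}^n\Psi(j\theta_2)\bigr)^{-1}$ as $b\to\infty$; $n=0$ is trivial and $n=1$ is the previous paragraph. For the inductive step, $u_n$ is locally bounded (by $m(b)^nR(b)^n$) and has a finite limit at $+\infty$, hence is bounded near $+\infty$, so for $b$ large the integrand is dominated, uniformly in $b$, by a constant multiple of $W(y)e^{-(n+1)\theta_2 y}(1+y)^{(n+1)(|\rho|+1)}$ (Potter's bound raised to the power $n+1$), which is integrable against the sub-exponential $W$; dominated convergence and \eqref{defscale} at $q=(n+1)\theta_2$ give $u_{n+1}(b)\to d_n/\Psi((n+1)\theta_2)=d_{n+1}$. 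Dividing, $W_n(b)/m(b)^n=u_n(b)/u_1(b)^n\to d_n/d_1^{\,n}=c_n$.

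Finally I would pass from \eqref{Wninf} to the stated Laplace transform. For $0\le\lambda<1$ we have $\lambda/m(b)<1/m(b)$, so Theorem~\ref{c7} gives $\mathbb{E}_\infty\bigl[e^{-\lambda T_b/m(b)}\bigr]=\bigl(\sum_{n\ge0}\lambda^n W_n(b)/m(b)^n\bigr)^{-1}$; since $W_n(b)\le m(b)^n$ (from the proof of Theorem~\ref{c7}), the summands are dominated by the summable $\lambda^n$, and the termwise convergence above gives $\mathbb{E}_\infty[e^{-\lambda T_b/m(b)}]\to1/G(\lambda)$ for $\lambda\in[0,1)$. To reach all $\lambda\ge0$ I would argue by tightness: $\mathbb{E}_\infty[T_b/m(b)]=1$ for every $b$ by Theorem~\ref{main1}, so Markov's inequality makes $(T_b/m(b))_b$ tight as $b\to\infty$; any subsequential weak limit $S$ has Laplace transform $1/G$ on $[0,1)$, and since the Laplace transform of $S$ is real analytic on $(0,\infty)$ while $1/G$ is real analytic on $[0,\infty)$, the two agree on all of $[0,\infty)$; hence the subsequential limit is unique, $T_b/m(b)$ converges in law to $S$, and therefore $\mathbb{E}_\infty[e^{-\lambda T_b/m(b)}]\to 1/G(\lambda)=\bigl(1+\sum_{n\ge1}\Psi(\theta_2)^n\lambda^n/\prod_{j=1}^n\Psi(j\theta_2)\bigr)^{-1}$ for every $\lambda\ge0$. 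The step I expect to be most delicate is precisely the interchange of limit and infinite sum for $\lambda\ge1$: a direct uniform-in-$b$ bound on the tail of $\sum_n\lambda^n W_n(b)/m(b)^n$ is awkward to obtain (the natural Potter estimates degrade with $n$), and the tightness-plus-analyticity detour is what circumvents it.
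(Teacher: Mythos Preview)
Your proposal is correct, and the core of the argument---showing by induction that $u_n(b):=W_n(b)R(b)^n\to\bigl(\prod_{j=1}^n\Psi(j\theta_2)\bigr)^{-1}$ via Potter's bounds and dominated convergence, then feeding this into \eqref{Wninf}---is exactly what the paper does. The only real difference is in the passage to all $\lambda\ge0$. The paper tracks explicit two-sided bounds of the form $(1-\epsilon)^{m_n}\le u_n(b)\prod_{j=1}^n\Psi(j\theta_2)\le(1+\epsilon)^n$ through the induction; comparing with the entire series of Corollary~\ref{p9}, this yields $\sum_n\lambda^nW_n(b)<\infty$ for every $\lambda\ge0$ directly, so the substitution $\lambda\mapsto\lambda/m(b)$ in \eqref{Wninf} is valid at once for all $\lambda$. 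Your route---restrict to $\lambda\in[0,1)$ via the crude domination $W_n(b)\le m(b)^n$, then upgrade to $\lambda\ge1$ by tightness of $(T_b/m(b))_b$ and analytic continuation of the Laplace transform---is a clean alternative that sidesteps the bookkeeping of $n$-dependent thresholds and constants. What the paper's approach buys is a self-contained series estimate; what yours buys is not having to worry about whether the $\epsilon$-bounds hold uniformly in $n$ for fixed $b$, at the price of a short (and standard) compactness-plus-uniqueness argument.
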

	\begin{proof}
		Assume that $g$ is regularly varying with index $\theta_1\in \mathbb{R}$. Set $\ell(x):=x^{-\theta_1}g(x)$ for any $x>0$. The function $\ell$ is slowly varying at $\infty$. Recall (\ref{Wn}) and $W_1(x)=m(x)$.
		For any $x>0$
		\begin{equation}\label{up1}
		W_1(x)=\int_0^\infty \frac{W(z)}{\ell(x+z)(x+z)^{\theta_1}e^{\theta_2 (x+z)}}\ddr z
		\leq \frac{e^{-\theta_2 x}}{\ell(x)x^{\theta_1}} \int_{0}^{\infty} \frac{W(z)}{e^{\theta_2 z}}\frac{\ell(x)}{\ell(x+z)}\ddr z.
		\end{equation}
		The  representation  theorem for {slowly varying functions}, see e.g. \cite[Theorem 1.3.1]{MR1015093}, entails that for any fixed {$z\ge0$}, $\frac{\ell(x)}{\ell(x+z)}\underset{x\rightarrow \infty}{\longrightarrow} 1$. Moreover, by Potter's theorem, see e.g. \cite[Theorem 1.5.6]{MR1015093}, for  any chosen constant $C>1$, and large enough $x$,
		\begin{equation}\label{potter}
		\frac{\ell(x)}{\ell(x+z)}\leq C\left(1+\frac{z}{x}\right)\leq C(1+z).
		\end{equation}
		Recall \eqref{defscale}, $\int_{0}^{\infty} \frac{W(z)}{e^{\theta_2 z}}\ddr z=\frac{1}{\Psi(\theta_2)}$. Fix $\epsilon\in(0,1)$. Since $\int_{0}^{\infty}(1+z)W(z)e^{-\theta_2 z}\ddr z<\infty$,  by Lebesgue's theorem, for $x$ large enough
		$\int_{0}^{\infty}\frac{W(z)}{e^{\theta_2 z}}\frac{\ell(x)}{\ell(x+z)}\ddr z\leq \frac{1+\epsilon}{\Psi(\theta_2)}$
		and (\ref{up1}) entails $W_1(x)\leq \frac{1+\epsilon}{R(x)\Psi(\theta_2)}$. In order to find a lower bound, first note that for $0<\beta<1$
		$W_1(x)\geq \int_0^{x^{\beta}} \frac{W(z)}{\ell(x+z)(x+z)^{\theta_1}e^{\theta_2 (x+z)}}\ddr z$.
		Applying  \eqref{defscale} again, we have for large enough $x$
		\begin{equation}\label{firstbound}
		\int_{0}^{x^{\beta}} \frac{W(z)}{e^{\theta_2 z}}\ddr z\geq \frac{1-\epsilon}{\Psi(\theta_2)}.
		\end{equation}
		Thus,
		\begin{equation}
		\begin{split}
		\int_0^{x^{\beta}} \frac{W(z)}{\ell(x+z)(x+z)^{\theta_1}e^{\theta_2 (x+z)}}\ddr z&\geq \frac{e^{-\theta_2 x}}{\sup_{y\in [x,x+x^{\beta}]}\ell(y)(x+x^{\beta})^{\theta_1}} \int_{0}^{x^{\beta}} \frac{W(z)}{e^{\theta_2 z}}\ddr z\\
		&\geq \frac{1}{R(x)}\frac{\ell(x)}{\sup_{y\in [x,x+x^{\beta}]}\ell(y)}\left(\frac{x}{x+x^{\beta}}\right)^{\theta_1}\frac{1-\epsilon}{\Psi(\theta_2)}.
		\end{split}
		\end{equation}
		
		Since $\beta<1$, $\left(\frac{x}{x+x^{\beta}}\right)^{\theta_1}\underset{x\rightarrow \infty}{\longrightarrow} 1$ and for $x$ large enough $$\frac{\ell(x)}{\inf_{\lambda\in [1,2]}\ell(\lambda x)}\ge \frac{\ell(x)}{\sup_{y\in [x,x+x^{\beta}]}\ell(y)}\ge\frac{\ell(x)}{\sup_{\lambda\in [1,2]}\ell(\lambda x)}.$$
		Applying the uniform convergence theorem, \cite[Theorem 1.2.1]{MR1015093}, to the slowly varying functions $\ell$ and $1/\ell$, we see that both of the bounds above converge towards $1$ as $x$ goes to $\infty$. Therefore, for large enough $x$, \begin{equation}\label{UCT}\frac{\ell(x)}{\sup_{y\in [x,x+x^{\beta}]}\ell(y)}\left(\frac{x}{x+x^{\beta}}\right)^{\theta_1}\geq 1-\epsilon
		\end{equation}
		and thus, for large enough $x$
		\begin{equation}\label{W1encadrement}
		\frac{(1-\epsilon)^2}{R(x)\Psi(\theta_2)}\leq W_1(x)\leq \frac{1+\epsilon}{R(x)\Psi(\theta_2)}.
		\end{equation}
		Recall that $W_1(x)=m(x)$. The inequalities above yield that $m(x)\sim \frac{1}{\Psi(\theta_2)R(x)}$ as $x$ goes to infinity. We proceed to show by induction that for any $n\geq 1$, there is $m_n\in \mathbb{N}$ such that  for all large enough $x$,
		\begin{equation}\label{bounds}
		\frac{(1-\epsilon)^{m_n}}{R(x)^n\prod_{j=1}^{n}\Psi(j\theta_2)}\leq W_n(x)\leq \frac{(1+\epsilon)^n}{R(x)^n\prod_{j=1}^{n}\Psi(j\theta_2)},
		\end{equation}
		which immediately yields
		$W_n(x)\sim \frac{1}{R(x)^n\prod_{j=1}^{n}\Psi(j\theta_2)}
		$ as $x$ goes to infinity. By letting $m_1=2$ one can see that  (\ref{bounds}) holds for $n=1$. Assume that (\ref{bounds}) holds for some $n\geq 1$ and $m_n\geq 1$.
		To show (\ref{bounds}) for $n+1$  we start with the lower bound.
		\begin{align*}
		W_{n+1}(x)&=\int_{0}^{\infty}\frac{W(z)W_n(x+z)}{R(x+z)}\ddr z\geq \frac{(1-\epsilon)^{m_n}}{\prod_{j=1}^{n}\Psi(j\theta_2)}\int_{0}^{\infty}\frac{W(z)}{R(x+z)^{n+1}}\ddr z\\
		&\geq \frac{(1-\epsilon)^{m_n}}{\prod_{j=1}^{n}\Psi(j\theta_2)}\int_{0}^{x^{\beta}}\frac{W(z)}{R(x+z)^{n+1}}\ddr z\\
		&=\frac{(1-\epsilon)^{m_n}}{R(x)^{n+1}\prod_{j=1}^{n}\Psi(j\theta_2)}\int_{0}^{x^{\beta}}\frac{W(z)}{e^{(n+1)\theta_2 z}}\left(\frac{x}{x+z}\right)^{(n+1)\theta_1}\left(\frac{\ell(x)}{\ell(x+z)}\right)^{n+1}\ddr x.
		\end{align*}
		Applying (\ref{firstbound}) for $(n+1)\theta_2$ and (\ref{UCT}), we obtain that
		\begin{align*}
		W_{n+1}(x)&\geq \frac{(1-\epsilon)^{m_n}}{R(x)^{n+1}\prod_{j=1}^{n}\Psi(j\theta_2)}\frac{1-\epsilon}{\Psi((n+1)\theta_2)}(1-\epsilon)^{n+1}
		=\frac{(1-\epsilon)^{m_{n+1}}}{R(x)^{n+1}\prod_{j=1}^{n+1}\Psi(j\theta_2)}
		\end{align*}
		with $m_{n+1}:=m_n+n+2$. We now look for the upper bound. One has
		\begin{align*}
		W_{n+1}(x)&\leq \frac{(1+\epsilon)^{n}}{\prod_{j=1}^{n}\Psi(j\theta_2)}\int_{0}^{\infty}\frac{W(z)}{R(x+z)^{n+1}}\ddr z\\
		&\leq \frac{(1+\epsilon)^{n}}{R(x)^{n+1}\prod_{j=1}^{n}\Psi(j\theta_2)}\int_{0}^{\infty}\frac{W(z)}{e^{(n+1)\theta_2 z}}\left(\frac{\ell(x)}{\ell(x+z)}\right)^{n+1}\ddr z.
		\end{align*}
		By (\ref{potter}), $\left(\frac{\ell(x)}{\ell(x+z)}\right)^{n+1}\leq C^{n+1}(1+z)^{n+1}$ and since $\int_{0}^{\infty}(1+z)^{n+1}\frac{W(z)}{e^{(n+1)\theta_2 z}}\ddr z<\infty$, Lebesgue's theorem entails
		$$\int_{0}^{\infty}\frac{W(z)}{e^{(n+1)\theta_2 z}}\left(\frac{\ell(x)}{\ell(x+z)}\right)^{n+1}\ddr z\underset{x\rightarrow \infty}{\longrightarrow} \frac{1}{\Psi((n+1)\theta_2)}$$
		which allows us to conclude. We deduce from \eqref{bounds} and the convergence of the series in Corollary \ref{p9} that for large enough $x$, the series $\sum_{n\geq 1}\lambda^n W_n(x)$ is convergent for any $\lambda\geq 0$. Moreover, for any fixed $n$,
		$\lim_{x\rightarrow\infty}\frac{W_n(x)}{(W_1(x))^n}=\prod_{j=1}^n \frac{\Psi(\theta_2)}{\Psi(j\theta_2)}$.
		Recall that for $b\ge 0$, $m(b)=W_1(b)$. Then by replacing $\lambda$ by $\lambda/W_1(b)$ in (\ref{Wninf}) in Theorem
		\ref{c7}, we get the desired limit.
	\end{proof}
	%
	\begin{corol}\label{cvlawcriticalstablerv} Suppose that there exists $1<\alpha\leq 2$  such that for $c>0$, $\Psi(\lambda)\sim c\lambda^\alpha$ as $\lambda\rightarrow 0+$. If  $R(x)=x^{\theta}\ell(x)$ for $x>0$ and $\theta>\alpha$ and $\ell$ is a slowly varying function at $\infty$, then \eqref{cdicondition} is satisfied and
		$\lim_{b\rightarrow \infty}\mbb{E}_\infty\left[e^{-\lambda \frac{T_b}{m(b)}}\right]=\mathbb{E}[e^{-\lambda S_{\alpha,\theta}}]$
		where
		\[\mathbb{E}[e^{-\lambda S_{\alpha,\theta}}]=\left[\sum^\infty_{n=0}\left( \frac{\lambda \Gamma(\theta)}{\Gamma(\theta-\alpha)}\right)^n\prod_{i=1}^{n}\frac{\Gamma(i\theta-i\alpha)}{\Gamma(i\theta-(i-1)\alpha)} \right]^{-1}\!\!\!\!\! \text{ for any } \lambda\geq 0.\]
	\end{corol}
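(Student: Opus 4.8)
The plan is to follow the same scheme that produces Corollary~\ref{exponen_conve}, but now with the polynomial branching rate $R(x)=x^\theta\ell(x)$ and the stable-like branching mechanism $\Psi(\lambda)\sim c\lambda^\alpha$. The starting point is Theorem~\ref{c7}: since $\Psi'(0+)=0$ in the critical case but $\theta>\alpha$, condition~\eqref{cdicondition} reads $\int^\infty \frac{1}{x\Psi(1/x)x^\theta\ell(x)}\,\ddr x<\infty$, and because $x\Psi(1/x)\sim cx^{1-\alpha}$ as $x\to\infty$ (from $\Psi(\lambda)\sim c\lambda^\alpha$), the integrand is regularly varying at $\infty$ with index $-(1-\alpha)-\theta=\alpha-1-\theta<-1$, so Karamata's theorem gives convergence; hence $\infty$ is an instantaneous entrance boundary by Theorem~\ref{main1}. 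Then, exactly as in Corollary~\ref{exponen_conve}, the key is to identify the precise asymptotics of $W_n(b)$ as $b\to\infty$ and to show that the normalized series $\sum_n \lambda^n W_n(b)/W_1(b)^n$ converges to the claimed expression with $\lim_{b\to\infty} W_n(b)/W_1(b)^n$ equal to a product of Gamma ratios.

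The computational heart is: first, $W_1(b)=m(b)=\int_b^\infty \frac{W(z-b)}{R(z)}\,\ddr z\sim\frac{1}{\Psi(\theta_{\mathrm{eff}})}$ type estimate, but here $R$ is polynomial rather than exponential, so the scale function's limiting behavior is different. Using the stable asymptotics, the bounds \eqref{boundW} give $W(x)\sim \frac{x^{\alpha-1}}{c\,\Gamma(\alpha)}$ as $x\to\infty$ (in the exactly stable case $W(x)=x^{\alpha-1}/(c\Gamma(\alpha))$ after rescaling). Substituting into $W_1(b)=\int_0^\infty \frac{W(z)}{R(b+z)}\,\ddr z$ and using $R(b+z)=(b+z)^\theta\ell(b+z)\sim b^\theta\ell(b)(1+z/b)^\theta$, one rescales $z=bu$ to obtain $W_1(b)\sim \frac{b^{\alpha}}{c\Gamma(\alpha) b^\theta\ell(b)}\int_0^\infty \frac{u^{\alpha-1}}{(1+u)^\theta}\,\ddr u = \frac{1}{c\Gamma(\alpha)R(b)/b^\alpha}\cdot B(\alpha,\theta-\alpha)=\frac{\Gamma(\theta-\alpha)}{c\,\Gamma(\theta)}\cdot\frac{b^\alpha}{R(b)}$, using the Beta integral $\int_0^\infty u^{\alpha-1}(1+u)^{-\theta}\,\ddr u=\Gamma(\alpha)\Gamma(\theta-\alpha)/\Gamma(\theta)$. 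Iterating this computation for $W_{n+1}(b)=\int_0^\infty \frac{W(z)W_n(b+z)}{R(b+z)}\,\ddr z$, where inductively $W_n(b)\sim C_n b^{n\alpha}/R(b)^n$, produces the recursion $C_{n+1}=C_n\cdot \frac{1}{c\Gamma(\alpha)}\int_0^\infty u^{\alpha-1}(1+u)^{n\alpha-(n+1)\theta}\,\ddr u = C_n\cdot\frac{\Gamma(\alpha)\Gamma((n+1)\theta-n\alpha-\alpha)}{c\Gamma(\alpha)\Gamma((n+1)\theta-n\alpha)}$; telescoping gives $C_n/C_1^n=\prod_{i=1}^n \frac{\Gamma(i\theta-i\alpha)}{\Gamma(i\theta-(i-1)\alpha)}\cdot(\text{the }c,\Gamma(\alpha)\text{ factors cancel against }C_1^n)$, matching $\lim_{b\to\infty}W_n(b)/W_1(b)^n=\bigl(\Gamma(\theta)/\Gamma(\theta-\alpha)\bigr)^{?}\prod_{i=1}^n\frac{\Gamma(i\theta-i\alpha)}{\Gamma(i\theta-(i-1)\alpha)}$, which one then matches term-by-term against the stated Laplace transform of $S_{\alpha,\theta}$.

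The rigorous passage from these heuristic asymptotics to honest two-sided bounds is done with the same machinery as in Corollary~\ref{exponen_conve}: split $\int_0^\infty=\int_0^{b^\beta}+\int_{b^\beta}^\infty$ for $0<\beta<1$, control the slowly varying factor $\ell(b)/\ell(b+z)$ via Potter's theorem~\cite[Theorem 1.5.6]{MR1015093} and the uniform convergence theorem~\cite[Theorem 1.2.1]{MR1015093}, use $\Psi(\lambda)\sim c\lambda^\alpha$ only through $W(x)\sim x^{\alpha-1}/(c\Gamma(\alpha))$ (again via \eqref{boundW}, or exactly in the pure stable case), and dominate the tail integral using $W(x)=o(x^p)$ for suitable $p$. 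One then proves by induction a statement of the form $(1-\epsilon)^{m_n}D_n b^{n\alpha}/R(b)^n\leq W_n(b)\leq (1+\epsilon)^{n}D_n b^{n\alpha}/R(b)^n$ for all large $b$, with $D_n$ the explicit constant from the telescoped Beta integrals. Finally, with $\mbb{E}_\infty[e^{-\lambda T_b}]=1/\sum_n \lambda^n W_n(b)$ from \eqref{Wninf} and replacing $\lambda$ by $\lambda/W_1(b)$, the bounds \eqref{bounds}-type estimates plus the convergence of the limiting series (which has infinite radius of convergence by a ratio test on the Gamma quotients, since $\Gamma(i\theta-i\alpha)/\Gamma(i\theta-(i-1)\alpha)$ decays superexponentially as $i\to\infty$ when $\theta>\alpha$) let us pass to the limit term by term, yielding $\lim_{b\to\infty}\mbb{E}_\infty[e^{-\lambda T_b/m(b)}]=\mathbb{E}[e^{-\lambda S_{\alpha,\theta}}]$.

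The main obstacle I anticipate is \emph{uniform} control of the ratio $W_n(b+z)/W_n(b)$ inside the iterated integral across the whole range $z\in(0,\infty)$: the induction hypothesis gives pointwise asymptotics but one needs enough uniformity (in $z$, for large $b$) to justify taking the limit under the integral sign against the Beta density $u^{\alpha-1}(1+u)^{-(n+1)\theta+n\alpha}$. This is handled, as in Corollary~\ref{exponen_conve}, by combining a Potter-type polynomial bound $W_n(b+z)/W_n(b)\le C(1+z/b)^{p_n}$ on $[0,b^\beta]$ (absorbed into the integrable Beta tail since $(n+1)\theta-n\alpha-\alpha>0$) with a crude $o(\cdot)$ bound on $[b^\beta,\infty)$; the exponent arithmetic $(n+1)\theta-n\alpha>\theta>\alpha>1$ guarantees all the Beta/Gamma integrals converge, so the bookkeeping closes, but it requires care to keep the $\epsilon$-losses under control through the induction so that $m_n$ stays finite at each step.
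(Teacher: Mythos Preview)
Your overall strategy is correct and leads to the same result, but it differs from the paper's computational route in an instructive way. The paper does \emph{not} use the Tauberian asymptotic $W(x)\sim x^{\alpha-1}/(c\Gamma(\alpha))$ followed by Beta integrals. Instead it exploits the Gamma-integral representation
\[
(x+z)^{-s}=\frac{1}{\Gamma(s)}\int_0^\infty \lambda^{s-1}e^{-\lambda(x+z)}\,\ddr\lambda,
\]
which, after Fubini, converts $\int_0^\infty W(z)(x+z)^{-s}\,\ddr z$ into an integral involving $\int_0^\infty W(z)e^{-\lambda z}\,\ddr z=1/\Psi(\lambda)$ directly via \eqref{defscale}. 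The hypothesis $\Psi(\lambda)\sim c\lambda^\alpha$ is then used only in this $\lambda$-integral, and the Gamma constants appear from $\int_0^\infty e^{-\lambda x}\lambda^{s-1-\alpha}\,\ddr\lambda$. This trick sidesteps both the Tauberian step and the rescaling $z=bu$, and makes the inductive step a clean repetition of the same identity with $s=(n+1)\theta-n\alpha$. Your Beta-integral approach is equally valid and perhaps more transparent for readers unfamiliar with the Laplace trick; the paper's route is shorter because it never needs the asymptotic of $W$ itself.

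Two small points to tighten in your write-up. First, \eqref{boundW} alone gives only $W(x)\asymp x^{\alpha-1}$ with unspecified constants $c_1,c_2$; to get the precise constant $1/(c\Gamma(\alpha))$ you need Karamata's Tauberian theorem plus the monotone density theorem (both in \cite{MR1015093}), since $W$ is nondecreasing. Second, the split $\int_0^{b^\beta}+\int_{b^\beta}^\infty$ that you borrow from Corollary~\ref{exponen_conve} is tailored to the exponential case, where $e^{-\theta_2 z}$ concentrates mass near $z=0$. In the polynomial case, after your rescaling $z=bu$ the full Beta integral contributes; the correct domination argument is simply Potter's bound $\ell(b)/\ell(b(1+u))\le C(1+u)^\eta$ with $\eta<\theta-\alpha$, giving an integrable majorant $u^{\alpha-1}(1+u)^{n\alpha-(n+1)\theta+n\eta}$ uniformly in $b$, so DCT applies without any splitting. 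Your identification of the uniformity issue in passing the inductive asymptotic through the integral is spot on, and the fix you propose (two-sided $(1\pm\epsilon)$-bounds valid for all large $b$) is exactly what is needed.
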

	\begin{proof}  By the assumption, since $\theta>\alpha$ and $\Psi(\lambda)\sim c\lambda^{\alpha}$ when $\lambda$ goes to $0$, we have $\int^{\infty}\frac{\ddr x}{x^{\theta+1}\ell(x)\Psi(1/x)}<\infty$ and \eqref{cdicondition} holds. We establish now that the series $\sum_{n\geq 1}a_n\lambda^{n}$ with $a_n:=\prod_{i=1}^{n}\frac{\Gamma(i\theta-i\alpha)}{\Gamma(i\theta-(i-1)\alpha)}$ converges for all $\lambda\geq 0$. Since $\alpha>1$, then $\Gamma(i\theta-(i-1)\alpha)\geq \Gamma(i\theta-i\alpha)(i\theta-i\alpha)$
		and plainly $a_n\leq \frac{1}{(\theta-\alpha)^{n}n!}$ which ensures the convergence of the series for any $\lambda$.\\
		
		Notice that
		\begin{equation}\label{W1stablelike}
		\begin{split}
		W_1(x)&=\int_{x}^{\infty}\frac{W(z-x)}{R(z)}\ddr z
		=\int_{0}^{\infty}\frac{W(z)}{(x+z)^{\theta}\ell(z+x)}\ddr z\\
		&=\frac{1}{\Gamma(\theta)}\int_{0}^{\infty}\frac{W(z)}{\ell(z+x)}\ddr z\int_{0}^{\infty}\lambda^{\theta-1}e^{-\lambda(z+x)}\ddr \lambda\\
		&=\frac{1}{\Gamma(\theta)}\int_{0}^{\infty}\ddr \lambda e^{-\lambda x}\lambda^{\theta-1}\frac{1}{\ell(x)}\int_{0}^{\infty}W(z)e^{-\lambda z}\frac{\ell(x)}{\ell(x+z)}\ddr z.
		\end{split}
		\end{equation}
		For any fixed $z$, $\frac{\ell(x)}{\ell(x+z)}\underset{x\rightarrow \infty}{\longrightarrow} 1$. By Potter's theorem, for any $C>1$ and $x$ large enough, $\frac{\ell(x)}{\ell(x+z)}\leq C(1+z/x) \leq C(1+z)$.
		Since $\int_{0}^{\infty}W(z)e^{-\lambda z}(1+z)\ddr z<\infty,$  by Lebesgue's theorem we have
		\begin{equation}\label{limitpsi}
		\int_{0}^{\infty}W(z)e^{-\lambda z}\frac{\ell(x)}{\ell(x+z)}\ddr z\underset{x\rightarrow \infty}{\longrightarrow} \frac{1}{\Psi(\lambda)}.
		\end{equation}
		One thus concludes that
		\begin{equation}\label{equivmstable}
		W_1(x)\underset{x\rightarrow \infty}{\sim}\frac{1}{\Gamma(\theta)\ell(x)}\int_{0}^{\infty}\ddr \lambda e^{-\lambda x}\frac{\lambda^{\theta-1}}{\Psi(\lambda)}\underset{x\rightarrow \infty}{\sim} \frac{c\Gamma(\theta-\alpha)}{\Gamma(\theta)}\frac{1}{x^{\theta-\alpha}\ell(x)}.
		\end{equation}
		We are going to prove by induction that for any $n\geq 1$
		\begin{equation}\label{equivWn}
		W_n(x)\sim \frac{x^{n\alpha-n\theta}}{\ell(x)^{n}}\prod_{i=1}^{n}\frac{\Gamma(i\theta-i\alpha)}{\Gamma(i\theta-(i-1)\alpha)} \qquad \text{as} \,\, x\to\infty.
		\end{equation}
		If it holds for $n=m$, then
		\begin{equation*}
		\begin{split}
		W_{m+1}(x)
		&=
		\int^\infty_x \frac{W(z-x)}{z^\theta \ell(z)}W_m(z)\ddr z\\
		&\sim
		\frac{1}{\ell(x)^m}\prod_{i=1}^{m}\frac{\Gamma(i\theta-i\alpha)}{\Gamma(i\theta-(i-1)\alpha)}\int^\infty_x \frac{W(z-x)}{z^\theta \ell(z)}z^{m\alpha-m\theta}\ddr z.
		\end{split}
		\end{equation*}
		Focussing on the integral term above, one has \begin{align*}
		&\int^\infty_x \frac{W(z-x)}{z^\theta \ell(z)}z^{m\alpha-m\theta}\ddr z\\
		&=\frac{1}{\ell(x)}\int_{0}^{\infty}\frac{\ell(x)}{\ell(z+x)}\frac{W(z)}{(z+x)^{(m+1)\theta-m\alpha}}\ddr z\\
		&=\frac{1}{\ell(x)}\frac{1}{\Gamma((m+1)\theta-m\alpha)}\int_{0}^{\infty}\ddr \lambda e^{-\lambda x}\lambda^{(m+1)\theta-m\alpha-1}\int_{0}^{\infty}W(z)e^{-\lambda z}\frac{\ell(x)}{\ell(x+z)}\ddr z\\
		&\sim \frac{1}{\ell(x)}\frac{1}{\Gamma((m+1)\theta-m\alpha)}x^{(m+1)( \alpha-\theta)}\int^\infty_0e^{-\lambda }\lambda^{(m+1)(\theta-\alpha)-1}\ddr \lambda\\
		&=\frac{1}{\ell(x)}\frac{\Gamma((m+1)\theta-(m+1)\alpha)}{\Gamma((m+1)\theta-m\alpha)}x^{(m+1)(\alpha-\theta)},
		\end{align*}
		where the equivalence above follows from (\ref{limitpsi}). We then conclude that $$
		W_{m+1}(x)\sim \frac{x^{(m+1)\alpha-(m+1)\theta}}{\ell(x)^{m+1}}\prod_{i=1}^{m+1}\frac{\Gamma(i\theta-i\alpha)}{\Gamma(i\theta-(i-1)\alpha)} \qquad
		\text{as} \,\, x\to\infty$$
		and (\ref{equivWn}) holds for any $n\geq 1$. The equivalence \eqref{equivWn} and the convergence of the series $\sum_{n\geq 1}a_n\lambda^{n}$ for any $\lambda\geq 0$ entail that for large enough $x$, $\sum_{n\geq 1}\lambda^{n}W_n(x)<\infty$ for any $\lambda\geq 0$. Finally, by (\ref{equivmstable}) and  (\ref{equivWn}) we observe that
		\[\lim_{x\rightarrow \infty}\frac{W_n(x)}{(W_1(x))^n}=\left( \frac{\Gamma(\theta)}{\Gamma(\theta-\alpha)}\right)^n\prod_{i=1}^{n}\frac{\Gamma(i\theta-i\alpha)}{\Gamma(i\theta-(i-1)\alpha)} .\]
		Recall that $W_1(b)=m(b).$ Applying Theorem \ref{c7} and letting $x\to\infty$ give the desired result. \end{proof}
	\begin{remark}\label{exactstable} If $\Psi(\lambda)=c\lambda^{\alpha}$ for all $\lambda \geq 0$ and some $c>0$ and $R(x)=x^{\theta}$ for all $x\geq 0$ with $\theta>\alpha$, we see by replacing $\ell$ by  constant function $1$ in \eqref{W1stablelike} that $m(b)=c\frac{\Gamma(\theta-\alpha)}{\Gamma(\theta)}b^{\alpha-\theta}$ for any $b>0$.
		%
	\end{remark}
	Finding a more general condition over $\Psi$ and $R$ for $\left(\frac{T_b}{\mathbb{E}_\infty(T_b)},b\geq 0\right)$ to converge in law does not seem to follow directly from our approach. We now look for conditions entailing convergence in probability.
	
	\subsection{Convergence in probability}
	We first show that under $\mathbb{H}_1$, $\mathbb{E}_\infty(T_b)\underset{b\rightarrow \infty}{\sim} \varphi(b)$. Recall that $\varphi(b)$ is the first passage time below $b$ of the deterministic flow $(x_t, t \geq 0)$ started from infinity.
	\begin{lemma}\label{equivmphi} Assume $\gamma>0$ and \eqref{cdicondition}. If $\mathbb{H}_1$ is satisfied, then $\mathbb{E}_\infty[T_b]\underset{b\rightarrow \infty}{\sim} \varphi(b)$.
	\end{lemma}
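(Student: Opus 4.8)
The plan is to compare the two quantities directly in their integral forms. By Theorem \ref{main1} we have $\mathbb{E}_\infty[T_b]=m(b)=\int_{b}^{\infty}\frac{W(x-b)}{R(x)}\ddr x$, and recall $\varphi(b)=\frac{1}{\gamma}\int_{b}^{\infty}\frac{\ddr x}{R(x)}$; note that under \eqref{cdicondition} with $\gamma>0$ one has $\int^{\infty}\frac{\ddr x}{R(x)}<\infty$ (see the Remark after Theorem \ref{main1}), so $\varphi$ is finite on $(0,\infty)$. I will prove $\limsup_{b\to\infty}m(b)/\varphi(b)\le 1$ and $\liminf_{b\to\infty}m(b)/\varphi(b)\ge 1$ separately.

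The upper bound is immediate from Lemma \ref{l1}: since $W$ is nondecreasing with $W(y)\uparrow W(\infty)=1/\gamma$ (here $\gamma>0$), we have $W(x-b)\le 1/\gamma$ for all $x\ge b$, hence $m(b)\le \frac{1}{\gamma}\int_b^{\infty}\frac{\ddr x}{R(x)}=\varphi(b)$ for every $b>0$, so $m(b)/\varphi(b)\le 1$.

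For the lower bound, fix $h>1$ and split $m(b)=\int_{b}^{bh}\frac{W(x-b)}{R(x)}\ddr x+\int_{bh}^{\infty}\frac{W(x-b)}{R(x)}\ddr x$. Dropping the first (nonnegative) term and using that on $\{x\ge bh\}$ one has $x-b\ge (h-1)b$ together with monotonicity of $W$, we get $m(b)\ge W((h-1)b)\int_{bh}^{\infty}\frac{\ddr x}{R(x)}=\gamma\, W((h-1)b)\,\varphi(bh)$. Now fix $\ep\in(0,1)$; since $W((h-1)b)\to 1/\gamma$ as $b\to\infty$, there is $b_0=b_0(h,\ep)$ such that $\gamma W((h-1)b)\ge 1-\ep$ for $b\ge b_0$, hence $\frac{m(b)}{\varphi(b)}\ge (1-\ep)\frac{\varphi(bh)}{\varphi(b)}$. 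Letting $b\to\infty$ and then $\ep\downarrow 0$ yields $\liminf_{b\to\infty}\frac{m(b)}{\varphi(b)}\ge \liminf_{b\to\infty}\frac{\varphi(bh)}{\varphi(b)}$ for every $h>1$. Taking $\limsup_{h\to 1+}$ on the right-hand side and invoking $\mathbb{H}_1$ gives $\liminf_{b\to\infty}\frac{m(b)}{\varphi(b)}\ge 1$. Combining the two bounds, $m(b)/\varphi(b)\to 1$, i.e. $\mathbb{E}_\infty[T_b]\sim\varphi(b)$ as $b\to\infty$.

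There is no serious obstacle here; the only points requiring a little care are the ordering of the three limits ($b\to\infty$, then $\ep\downarrow 0$, then $h\downarrow 1$) and the a priori finiteness of $\varphi(bh)$, both of which are handled above. The role of $\mathbb{H}_1$ is precisely to close the gap between $\varphi(bh)$ and $\varphi(b)$ in the limit $h\downarrow 1$, reflecting the heuristic that the time for the deterministic flow \eqref{ode} to travel from $bh$ down to $b$ is negligible compared with $\varphi(b)$.
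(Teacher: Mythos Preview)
Your proof is correct and follows essentially the same approach as the paper: both arguments split the integral at $bh$, use $W(x-b)\le W(\infty)=1/\gamma$ for the upper bound and $W(x-b)\ge W\big((h-1)b\big)$ on $[bh,\infty)$ for the lower bound, then invoke $\mathbb{H}_1$ after letting $b\to\infty$ and $h\downarrow 1$. The only cosmetic difference is that the paper writes $m(b)/\varphi(b)=1-\text{(remainder)}$ and shows the remainder vanishes, whereas you bound the ratio directly from above and below.
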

	\begin{proof} By Lemma \ref{l4},
		$\frac{\mathbb{E}_\infty[T_b]}{\varphi(b)}=\frac{\int_{b}^{\infty}\frac{W(x-b)}{R(x)}\ddr x}{\int_{b}^{\infty}\frac{1}{\gamma}\frac{\ddr x}{R(x)}}=1-\frac{\int_{b}^{\infty}\frac{W(\infty)-W(x-b)}{R(x)}\ddr x}{\int_{b}^{\infty}\frac{1}{\gamma}\frac{\ddr x}{R(x)}}$
		and for $h>1$
		\begin{align*}
		0\leq\frac{\int_{b}^{\infty}\frac{W(\infty)-W(x-b)}{R(x)}\ddr x}{\int_{b}^{\infty}\frac{1}{\gamma}\frac{\ddr x}{R(x)}}&=\frac{\int_{bh}^{\infty}\frac{W(\infty)-W(x-b)}{R(x)}\ddr x+\int_{b}^{bh}\frac{W(\infty)-W(x-b)}{R(x)}\ddr x}{\int_{b}^{\infty}\frac{1}{\gamma}\frac{\ddr x}{R(x)}}\\
		&\leq \gamma \big(W(\infty)-W((h-1)b)\big)+\frac{\varphi(b)-\varphi(bh)}{\varphi(b)}.
		\end{align*}
		Therefore,
		$\underset{b\rightarrow \infty}{\limsup} \frac{\int_{b}^{\infty}\frac{W(\infty)-W(x-b)}{R(x)}\ddr x}{\int_{b}^{\infty}\frac{1}{\gamma}\frac{\ddr x}{R(x)}}\leq 1-\underset{b\rightarrow \infty}{\liminf}\frac{\varphi(bh)}{\varphi(b)}$.
		By $\mathbb{H}_1$, we have\\
		$\underset{h\rightarrow 1+}{\limsup}\ \underset{b\rightarrow \infty}{\liminf}\frac{\varphi(bh)}{\varphi(b)}=1$, and we get $\underset{b\rightarrow \infty}{\limsup} \frac{\int_{b}^{\infty}\frac{W(\infty)-W(x-b)}{R(x)}\ddr x}{\int_{b}^{\infty}\frac{1}{\gamma}\frac{\ddr x}{R(x)}}=0.$
	\end{proof}
	
	\begin{theorem}\label{cvinproba}  
		Assume $\gamma>0$ and \eqref{cdicondition}. If $\mathbb{H}_1$ and $\mathbb{H}_3$ are satisfied, then
		$\frac{T_b}{\mathbb{E}_\infty[T_b]}\underset{b\rightarrow \infty}{\longrightarrow 1} \text{ in } \mathbb{P}_\infty \text{-probability}.$
	\end{theorem}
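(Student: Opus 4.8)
The plan is to establish the convergence $T_b/\mathbb{E}_\infty[T_b]\to 1$ in $\mathbb{P}_\infty$-probability by a second-moment (Chebyshev) argument, combined with the estimates we already have on $\mathbb{E}_\infty[T_b]$ and $\mathrm{Var}_\infty(T_b)$. Since $\mathbb{E}_\infty[T_b]=m(b)=W_1(b)$ and, by Corollary \ref{c4}, $\mathrm{Var}_\infty(T_b)=W_1(b)^2-2W_2(b)$, it suffices to show that
\[
\frac{\mathrm{Var}_\infty(T_b)}{(\mathbb{E}_\infty[T_b])^2}=1-\frac{2W_2(b)}{W_1(b)^2}\underset{b\rightarrow\infty}{\longrightarrow}0,
\]
equivalently $W_2(b)/W_1(b)^2\to \tfrac12$. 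The upper bound $W_2(b)\le \tfrac12 W_1(b)^2$ is immediate from the recursion $W_2(b)=\int_b^\infty \frac{W(z-b)}{R(z)}W_1(z)\,\ddr z$ together with the fact that $W_1$ is decreasing and $\int_b^\infty \frac{W(z-b)}{R(z)}\,\ddr z=W_1(b)$; actually one gets $W_2(b)\le W_1(b)\int_b^\infty\frac{W(z-b)}{R(z)}W_1(z)/W_1(b)\,\ddr z$, and refining via $\varphi$ (Proposition \ref{all-lambda}) gives $W_2(b)\le \varphi(b)^2/2$. So the real work is the matching lower bound: $\liminf_{b\to\infty} W_2(b)/W_1(b)^2\ge \tfrac12$.

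\textbf{Lower bound on $W_2(b)$.} First, using Lemma \ref{equivmphi} (which needs $\mathbb{H}_1$), we may replace $W_1(b)=\mathbb{E}_\infty[T_b]$ by $\varphi(b)$ asymptotically, so it is enough to show $\liminf_b 2W_2(b)/\varphi(b)^2\ge 1$. I would bound, for fixed $h>1$,
\[
W_2(b)=\int_b^\infty\frac{W(z-b)}{R(z)}W_1(z)\,\ddr z\ \ge\ \int_{bh}^\infty\frac{W(z-b)}{R(z)}W_1(z)\,\ddr z\ \ge\ W((h-1)b)\int_{bh}^\infty\frac{W_1(z)}{R(z)}\,\ddr z,
\]
using that $W$ is increasing so $W(z-b)\ge W(bh-b)=W((h-1)b)$ for $z\ge bh$. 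Since $W((h-1)b)\to W(\infty)=1/\gamma$ as $b\to\infty$ (Lemma \ref{l1}), and since $W_1(z)\sim \varphi(z)$ uniformly on $[bh,\infty)$ — here one uses Lemma \ref{equivmphi} again together with the monotonicity of both $W_1$ and $\varphi$, or Potter-type control — the integral $\int_{bh}^\infty \frac{W_1(z)}{R(z)}\,\ddr z$ is comparable to $\gamma\int_{bh}^\infty\frac{\varphi(z)}{R(z)}\,\ddr z=\gamma\cdot\frac{\varphi(bh)^2}{2}$ (by $\varphi'=-1/(\gamma R)$, exactly as in Proposition \ref{all-lambda}). Putting this together,
\[
\liminf_{b\rightarrow\infty}\frac{2W_2(b)}{\varphi(b)^2}\ \ge\ \liminf_{b\rightarrow\infty}\frac{\varphi(bh)^2}{\varphi(b)^2}\ =\ \Big(\liminf_{b\rightarrow\infty}\frac{\varphi(bh)}{\varphi(b)}\Big)^2,
\]
and then letting $h\downarrow 1$ and invoking $\mathbb{H}_1$ gives the bound $1$. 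Hence $\mathrm{Var}_\infty(T_b)/(\mathbb{E}_\infty[T_b])^2\to 0$ and Chebyshev's inequality finishes the convergence in $\mathbb{P}_\infty$-probability.

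\textbf{Where $\mathbb{H}_3$ enters.} The argument sketched above only used $\mathbb{H}_1$, which is suspicious since the statement invokes $\mathbb{H}_3$ as well. The subtlety is in the uniformity of the replacement $W_1(z)\sim\varphi(z)$ on the tail $[bh,\infty)$: the quantity controlling the error between $W_1$ and $\varphi$ is precisely $\sup_{x\ge z}(W(\infty)-W(x-b))$ type terms coupled with oscillations of $R$ across scales $z\mapsto z+\rho z$, and this is exactly what $V(z,\rho)$ and condition $\mathbb{H}_3$ quantify (``$R$ has no deep valleys near $\infty$''). So I expect the honest proof to go: show $\mathbb{E}_\infty[T_b]\sim\varphi(b)$ via $\mathbb{H}_1$ (Lemma \ref{equivmphi}, already done), then show $\mathrm{Var}_\infty(T_b)=o(\varphi(b)^2)$ by splitting $\mathrm{Var}_\infty(T_b)$ using the double-integral formula \eqref{1.4}, bounding the inner integral $\int_x^\infty\frac{W(y-b)-W(y-x)}{R(y)}\,\ddr y$ — which is small because $W(y-b)-W(y-x)\to 0$ and is controlled by $\sup$-type oscillation of $R$ on $[x,x+\rho x]$, i.e. by $V(x,\rho)$ plus a $(W(\infty)-W((h-1)b))$ term — and using $\mathbb{H}_3$ to send the relevant $\rho\to 0$. \textbf{The main obstacle} I anticipate is precisely this: getting a quantitative bound on the inner integral $\int_x^\infty\frac{W(y-b)-W(y-x)}{R(y)}\,\ddr y$ uniformly in $b\le x$ that is $o$ of $\int_x^\infty \frac{W(y-b)}{R(y)}\,\ddr y$, and organizing the $h\to 1$, $\rho\to 0$, $b\to\infty$ limits in the correct order so that $\mathbb{H}_1$ and $\mathbb{H}_3$ can both be applied. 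Once that inner estimate is in hand, Chebyshev's inequality with $\varepsilon$ gives $\mathbb{P}_\infty(|T_b/\mathbb{E}_\infty[T_b]-1|>\varepsilon)\le \mathrm{Var}_\infty(T_b)/(\varepsilon^2(\mathbb{E}_\infty[T_b])^2)\to 0$, completing the proof.
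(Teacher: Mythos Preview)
Your first argument (before the paragraph ``Where $\mathbb{H}_3$ enters'') is correct and complete as it stands, and it genuinely does not use $\mathbb{H}_3$. The point you worried about --- ``uniformity'' of $W_1(z)\sim\varphi(z)$ on $[bh,\infty)$ --- is not an issue: Lemma~\ref{equivmphi} gives the pointwise asymptotic, which already yields $W_1(z)\ge(1-\epsilon)\varphi(z)$ for all $z\ge z_0(\epsilon)$, and that is all the lower bound needs. So your chain
\[
W_2(b)\ \ge\ W((h-1)b)\int_{bh}^\infty\frac{W_1(z)}{R(z)}\,\ddr z\ \ge\ (1-\epsilon)\,\gamma\,W((h-1)b)\,\frac{\varphi(bh)^2}{2}
\]
is valid for large $b$, and combined with the upper bound $W_2(b)\le\varphi(b)^2/2$ from Proposition~\ref{all-lambda} and $W_1(b)\sim\varphi(b)$ you get $\text{Var}_\infty(T_b)/m(b)^2\to 0$ via $\mathbb{H}_1$ alone. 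In particular, your route shows that the hypothesis $\mathbb{H}_3$ in the statement is superfluous.

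This is a different and cleaner argument than the paper's. The paper works directly from the double-integral expression \eqref{1.4} for $\text{Var}_\infty(T_b)$, splits the inner integral at $y=x+(h-1)b$, and after an integration by parts on the short-range piece is left with a term $\int_b^\infty\varphi(x)\big(\tfrac{1}{R(x+(h-1)b)}-\tfrac{1}{R(x)}\big)\ddr x$, which is where $V(b,h-1)$ and hence $\mathbb{H}_3$ enter. Your approach, by passing through the algebraic identity $\text{Var}_\infty(T_b)=W_1(b)^2-2W_2(b)$ and lower-bounding $W_2$ directly, never produces such a difference of $1/R$ at shifted arguments, so no ``no-deep-valleys'' control is needed. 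The paper's route does have one advantage: it yields the explicit three-term bound \eqref{controlvar}, which is later reused (with a specific choice $h=1+p(z_n)$) in the almost-sure Theorem~\ref{almost_sure_H3}; your argument gives the convergence-in-probability more cheaply but does not deliver that quantitative inequality.
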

	\begin{proof}  The proof is based on a Chebyshev's inequality type argument. By Corollary \ref{c4}, for any $b>0$ and $h>1$
		\begin{align*}
		&\text{Var}_\infty(T_b)=2\int_{b}^{\infty}\frac{W(x-b)}{R(x)}\ddr x\int_{x}^{\infty}\frac{W(y-b)-W(y-x)}{R(y)}\ddr y\\
		&\leq 2\int_{b}^{\infty}\frac{\ddr x}{\gamma R(x)}\int_{x+(h-1) b}^{\infty}\frac{W(y-b)-W(y-x)}{R(y)}\ddr y\\
		&\quad+2\int_{b}^{\infty}\frac{\ddr x}{\gamma R(x)}\int_{x}^{x+(h-1) b}\frac{W(y-b)-W(y-x)}{R(y)}\ddr y
		=: J_1+J_2,
		\end{align*}
		where
		\begin{align*} J_1&\leq 2\gamma \int_{b}^{\infty}\frac{\ddr x}{\gamma R(x)}\int_{x+(h-1) b}^{\infty}\frac{\ddr y}{\gamma R(y)}\left(W(\infty)-W((h-1) b)\right)\\
		&\leq 2\gamma\big(W(\infty)-W((h-1) b)\big)\varphi(b)\varphi(bh)
		\leq 2\gamma\big(W(\infty)-W((h-1) b)\big)\varphi(b)^2
		\end{align*}
		and, since $W(y-b)-W(y-x)\leq W(\infty)\leq \frac{1}{\gamma}$, then
		\begin{align}\label{I2} J_2\leq &2\int_{b}^{\infty}\frac{\varphi(x)-\varphi(x+(h-1) b)}{\gamma R(x)}\ddr x \nonumber\\
		=&2\left.\bigg[\!\!-\varphi(x)\big(\varphi(x)-\varphi(x+(h-1) b)\big)\bigg]_{x=b}^{x=\infty}\!\!\!\!\!\!+2\int_{b}^{\infty}\varphi(x)\left(\frac{1}{R(x+(h-1) b)}-\frac{1}{R(x)}\right)\ddr x.\right.\nonumber
		\end{align}
		Recall $V$ defined in \eqref{nobigvalley}. The latter integral is bounded above by
		$2V(b,h-1)\int_{b}^{\infty}\frac{\varphi(x)}{R(x)}\ddr x$. Therefore,  $J_2\leq 2 \varphi(b)[\varphi(b)-\varphi(bh)]+ 2\gamma \varphi(b)^2V(b,h-1)$ and for all $b>0$ and $h>1$,
		\begin{equation}\label{controlvar}
		\frac{\text{Var}_\infty(T_b)}{\varphi(b)^2}\leq 2\gamma\big(W(\infty)-W((h-1) b)\big)+2\frac{\varphi(b)[\varphi(b)-\varphi(bh)]}{\varphi(b)^2}+2\gamma V(b,h-1)
		\end{equation}
		
		One further has  $\underset{b\rightarrow \infty}{\limsup} \frac{\text{Var}_\infty(T_b)}{\varphi(b)^2}\leq 2\left(1-\underset{b\rightarrow \infty}{\liminf}\frac{\varphi(bh)}{\varphi(b)}\right)+{ 2 \gamma\underset{b\rightarrow \infty}{\limsup} V(b,h-1)}$
		and letting $h$ go to $1$, we get by $\mathbb{H}_1$ and $\mathbb{H}_3$ that $\underset{b\rightarrow \infty}{\limsup} \frac{\text{Var}_\infty(T_b)}{\varphi(b)^2}=0.$
	\end{proof}
	
	\begin{remark}\label{remarkcontrolvar}
		The upper bound \eqref{controlvar} for the variance of $T_b/ \varphi(b)$ under $\mathbb {P}_\infty $ has three terms, the first controls the random fluctuations, the second 	 the speed of coming down from infinity of the deterministic flow and the third controls the valleys depth in the neighbourhoods of infinity.
	\end{remark}
	\subsection{Almost-sure convergence}
	We now look for conditions ensuring that $(T_b/\mathbb{E}_\infty[T_b], b\geq 0)$ converges almost-surely towards $1$ under $\mbb{P}_\infty$. Recall $\varphi$ defined in (\ref{phi}), $V$ defined in (\ref{nobigvalley}), $W$ the scale function and for any $z>0$, set $\Delta(z):=W(\infty)-W(z)$.
	\begin{theorem}\label{almost_sure_H3} Assume \eqref{cdicondition} and $\gamma>0$. If $R$ satisfies the following condition:
		
		\noindent $\mathbb{H}_4$:
		there is a map $p:$  $\mbb{R}_+\to\mbb{R}_+$ such that $z\mapsto zp(z)$ is non-decreasing, $p(z)\underset{z\rightarrow \infty}{\longrightarrow} 0$ and
		\begin{itemize}
			\item[(i)] as $z$ goes to $\infty$, $\varphi(z)-\varphi(z+zp(z))=O\left(\varphi(z)\Delta(zp(z))\right),$
			\item[(ii)] there is $c>1$ such that
			$\int^{\infty}\frac{\Delta(zp(z)) }{\varphi(cz)R(cz)}\ddr z<\infty,$
			\item[(iii)] as $z$ goes to $\infty$, $V(z,p(z))=O\left(\Delta(zp(z))\right)$,
		\end{itemize}
		then
		$\frac{T_b}{\mbb{E}_\infty[T_b]}\underset{b\rightarrow \infty}{\longrightarrow}  1 \quad {\mbb P}_\infty\text{-a.s.}$
	\end{theorem}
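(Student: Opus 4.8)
The plan is to establish a weak law for the hitting times with an explicitly controlled error along a one–parameter family of deterministic levels, and then transfer it to all levels by pathwise monotonicity. Recall from \eqref{0.6} that $T_b=\int_0^{\tau_b^-}R(Z_s)^{-1}\,\ddr s$ under $\mathbb{P}_x$, so that $b\mapsto T_b$ is non-increasing and right-continuous pathwise, that $\mathbb{E}_\infty[T_b]=m(b)$ by Theorem~\ref{main1} and \eqref{moment1}, and that $\text{Var}_\infty(T_b)$ is finite and given by Corollary~\ref{c4}. Since $z\mapsto zp(z)$ is non-decreasing it has a limit in $(0,\infty]$, and that limit must be $+\infty$: otherwise $\Delta(zp(z))$, being non-increasing, would stay bounded below by a positive constant, and then $\mathbb{H}_4$(ii) would be contradicted because $\int^{\infty}\frac{\ddr z}{\varphi(cz)R(cz)}=-\frac{\gamma}{c}\int^{\infty}(\log\varphi)'(w)\,\ddr w=+\infty$ (using $\varphi'=-1/(\gamma R)$). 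Consequently $\Delta(zp(z))\to0$, condition $\mathbb{H}_1$ holds (a short check from $\mathbb{H}_4$(i)), and Lemma~\ref{equivmphi} gives $m(b)\sim\varphi(b)$ as $b\to\infty$.

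The first substantive step is a refinement of the variance bound \eqref{controlvar} obtained in the proof of Theorem~\ref{cvinproba}. Instead of a fixed ratio $h>1$, I would plug in the \emph{shrinking} perturbation $h=1+p(b)$, so that $(h-1)b=bp(b)$ and $bh=b+bp(b)$, which turns \eqref{controlvar} into
\[
\frac{\text{Var}_\infty(T_b)}{\varphi(b)^2}\ \le\ 2\gamma\,\Delta(bp(b))\ +\ 2\,\frac{\varphi(b)-\varphi(b+bp(b))}{\varphi(b)}\ +\ 2\gamma\,V(b,p(b)).
\]
By $\mathbb{H}_4$(i) the middle term is $O(\Delta(bp(b)))$ and by $\mathbb{H}_4$(iii) so is the last one, whence $\text{Var}_\infty(T_b)=O\big(\varphi(b)^2\Delta(bp(b))\big)$, and since $m\sim\varphi$ we get $\text{Var}_\infty(T_b)/m(b)^2=O\big(\Delta(bp(b))\big)$ as $b\to\infty$.

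Next I would exploit $\mathbb{H}_4$(ii) as a summability statement in the right time-scale. Writing $t=-\log\varphi(w)$ and using $\varphi'=-1/(\gamma R)$, i.e. $\frac{\ddr w}{\varphi(w)R(w)}=\gamma\,\ddr t$, condition $\mathbb{H}_4$(ii) (which, since $\Delta$ is non-increasing and $w\mapsto wp(w)$ non-decreasing, already implies $\int^{\infty}\frac{\Delta(wp(w))}{\varphi(w)R(w)}\,\ddr w<\infty$) says exactly that $t\mapsto G(t):=\Delta\!\big(w(t)p(w(t))\big)$, with $w(t)=\varphi^{-1}(e^{-t})$, is a non-increasing integrable function near $+\infty$. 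Now fix $q\in(0,1)$ and define levels $b_0<b_1(q)<b_2(q)<\cdots\to\infty$ by $\varphi(b_n(q))=q^n\varphi(b_0)$; then $-\log\varphi(b_n(q))$ is arithmetic with step $\log(1/q)>0$, so $\sum_n\Delta\big(b_n(q)p(b_n(q))\big)=\sum_n G\big(t_0+n\log(1/q)\big)\le \mathrm{const}+\frac{1}{\log(1/q)}\int^{\infty}G(t)\,\ddr t<\infty$. Combined with the variance estimate and Chebyshev's inequality under $\mathbb{P}_\infty$, this makes $\sum_n\mathbb{P}_\infty\!\big(|T_{b_n(q)}/m(b_n(q))-1|>\varepsilon\big)<\infty$ for every $\varepsilon>0$, so Borel--Cantelli yields $T_{b_n(q)}/m(b_n(q))\to1$ $\mathbb{P}_\infty$-a.s. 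For $b\in[b_n(q),b_{n+1}(q)]$, monotonicity of $T_\cdot$ and of $m$ gives $\frac{T_{b_{n+1}(q)}}{m(b_{n+1}(q))}\cdot\frac{m(b_{n+1}(q))}{m(b_n(q))}\le \frac{T_b}{m(b)}\le \frac{T_{b_n(q)}}{m(b_n(q))}\cdot\frac{m(b_n(q))}{m(b_{n+1}(q))}$, and since $m(b_{n+1}(q))/m(b_n(q))\to q$ (as $\varphi(b_{n+1}(q))/\varphi(b_n(q))=q$ and $m\sim\varphi$), letting $b\to\infty$ gives $q\le\liminf_{b\to\infty}T_b/m(b)\le\limsup_{b\to\infty}T_b/m(b)\le 1/q$ a.s. Intersecting over a sequence $q_k\uparrow1$ produces a single a.s.\ event on which $T_b/m(b)\to1$, which is the assertion.

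I expect the main obstacle to be the refined variance estimate: the key realisation is that feeding the \emph{$b$-dependent} perturbation $h=1+p(b)$ into \eqref{controlvar} collapses its three error terms into the single quantity $\Delta(bp(b))$, and that $\mathbb{H}_4$(i)--(iii) are calibrated precisely so that (i)+(iii) dominate the deterministic-speed and valley contributions by $\Delta(bp(b))$, while (ii), read in the $-\log\varphi$ time-scale, is exactly the summability condition along geometric grids needed for Borel--Cantelli. The passage $q\uparrow1$ in the last step is merely the device that repairs the unavoidable constant loss of any geometric grid, and is routine once the preceding estimates are in hand.
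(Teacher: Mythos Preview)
Your argument is correct and follows essentially the same arc as the paper's: refine the variance bound \eqref{controlvar} with the $b$-dependent choice $h=1+p(b)$ to get $\text{Var}_\infty(T_b)/m(b)^2=O(\Delta(bp(b)))$, turn $\mathbb{H}_4$(ii) into summability of $\Delta(b_np(b_n))$ along a geometric grid in $\varphi$ (your change of variables $t=-\log\varphi$ is neat), deduce a.s.\ convergence along the grid, then interpolate and send $q\uparrow 1$. The one place where you genuinely diverge is in the passage from ``$\sum_n \text{Var}_\infty(T_{b_n})/m(b_n)^2<\infty$'' to the a.s.\ limit along the subsequence: the paper writes $T_{z_n}=\sum_{k\ge n}\tau_k$ with $\tau_k=T_{z_k}-T_{z_{k+1}}$ independent (strong Markov and no negative jumps) and invokes Klesov's strong law for tails of random series, whereas you bypass this entirely with Chebyshev plus Borel--Cantelli. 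Your route is more elementary and avoids the external reference; the paper's decomposition into independent increments is perhaps more structural but not needed here. The paper also builds its grid via $m(z_n)=(1-\rho)^n m(z_0)$ and then compares to a $\varphi$-sequence through the bound $m(z)\ge c_1\gamma\varphi(cz)$, which is where the constant $c>1$ in $\mathbb{H}_4$(ii) enters; your direct reduction of $\mathbb{H}_4$(ii) to the $c=1$ integral (substituting $w=cz$ and using monotonicity of $zp(z)$ and $\Delta$) is cleaner.

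One small caveat: your claim that $\mathbb{H}_1$ follows ``by a short check from $\mathbb{H}_4$(i)'' is not obviously right, since $\mathbb{H}_4$(i) only controls the ratio $\varphi(z(1+p(z)))/\varphi(z)$ with $p(z)\to0$ varying in $z$, not for a fixed $h>1$. However you do not actually need $\mathbb{H}_1$: what you need is $m\sim\varphi$, and that follows directly by rerunning the proof of Lemma~\ref{equivmphi} with $h=1+p(b)$, giving $0\le 1-m(b)/\varphi(b)\le \gamma\Delta(bp(b))+(\varphi(b)-\varphi(b+bp(b)))/\varphi(b)\to0$. (The paper's own proof invokes Lemma~\ref{equivmphi} in the same informal way.)
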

	We will give examples of functions satisfying the assumptions of Theorem \ref{almost_sure_H3} later in Corollary \ref{stablelikepsiandcramer}. The proof of Theorem \ref{almost_sure_H3} is based on a strong law of large numbers for tails of random series, established in Klesov \cite{MR704328} and Nam \cite{Nam}. Recall that for any $z>0$, $m(z)=\mathbb{E}_\infty[T_z]$.
	\begin{lemma}\label{taillemma} Let $(z_n)_{n\geq 1}$ be a positive non-decreasing sequence going to $\infty$. If \begin{equation}\label{series}
		\sum_{n\geq 1}\frac{\text{Var}_{\infty}(T_{z_n})}{m(z_n)^2}<\infty,
		\end{equation}
		then
		\begin{equation}\label{astail} \frac{T_{z_n}}{m(z_n)}\underset{n\rightarrow \infty}{\longrightarrow} 1, \ \mathbb{P}_\infty-\text{a.s}.
		\end{equation}
	\end{lemma}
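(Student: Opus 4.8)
The plan is to realize the convergence \eqref{astail} as an instance of a strong law of large numbers for \emph{tails} of random series. First I would decompose the first passage times along the sequence $(z_n)$. By the strong Markov property of $X$ and the absence of negative jumps, for $n\le n'$ the variable $T_{z_n}-T_{z_{n'}}$ is, under $\mathbb{P}_\infty$, independent of $\mathcal{F}_{T_{z_{n'}}}$ and distributed as $T_{z_n}$ under $\mathbb{P}_{z_{n'}}$ (the process must pass continuously through level $z_{n'}$ before reaching $z_n$). Setting $\xi_k:=T_{z_k}-T_{z_{k+1}}\ge 0$, this gives a sequence of \emph{independent} nonnegative random variables with $T_{z_n}=\sum_{k\ge n}\xi_k$, where the series converges a.s.\ since $T_{z_n}<\infty$ $\mathbb{P}_\infty$-a.s.\ (which follows from $\mathbb{E}_\infty[T_{z_n}]=m(z_n)<\infty$ under \eqref{cdicondition}, using Theorem~\ref{main1}). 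Note $\mathbb{E}_\infty[\xi_k]=m(z_k)-m(z_{k+1})$ and hence $\sum_{k\ge n}\mathbb{E}_\infty[\xi_k]=m(z_n)$, while $\sum_{k\ge n}\mathrm{Var}_\infty(\xi_k)=\mathrm{Var}_\infty(T_{z_n})$ because the $\xi_k$ are independent.

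Next I would invoke the strong law of large numbers for tail series of independent summands, as established by Klesov \cite{MR704328} and Nam \cite{Nam}: if $(\xi_k)$ are independent nonnegative random variables with $S_n:=\sum_{k\ge n}\xi_k<\infty$ a.s.\ and $a_n:=\mathbb{E}[S_n]\to 0$, then the condition
\[
\sum_{n\ge 1}\frac{\mathrm{Var}(\xi_n)}{a_n^2}<\infty
\]
implies $S_n/a_n\to 1$ a.s. To apply this with $a_n=m(z_n)$, I must check that $\sum_{n\ge 1}\mathrm{Var}_\infty(\xi_n)/m(z_n)^2<\infty$, and here I would use $\mathrm{Var}_\infty(\xi_n)\le \mathrm{Var}_\infty(T_{z_n})$ together with the hypothesis \eqref{series}; alternatively, summation by parts (Abel/Kronecker-type manipulation) converts the tail-variance condition $\sum_n \mathrm{Var}_\infty(T_{z_n})/m(z_n)^2<\infty$ into the per-term condition $\sum_n\mathrm{Var}_\infty(\xi_n)/m(z_n)^2<\infty$ exactly as in the cited references. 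Since $m(z_n)=\mathbb{E}_\infty[T_{z_n}]\to 0$ (because $m$ is decreasing and, by \eqref{cdicondition} with Theorem~\ref{main1}, $m(b)=\int_b^\infty W(x-b)/R(x)\,\mathrm{d}x\to 0$ as $b\to\infty$), the hypothesis $a_n\to 0$ of the tail SLLN is met.

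The main obstacle is matching the precise form of the tail-series SLLN to what we have: the cited theorems are typically stated for the \emph{variance of the individual summands} $\xi_n$, whereas \eqref{series} is phrased in terms of $\mathrm{Var}_\infty(T_{z_n})$, i.e.\ the variance of the tail sum itself. The reconciliation is the elementary but slightly delicate observation that, because the $\xi_k$ are independent, $\mathrm{Var}_\infty(T_{z_n})=\sum_{k\ge n}\mathrm{Var}_\infty(\xi_k)$, so the monotone quantity $\mathrm{Var}_\infty(T_{z_n})$ dominates $\mathrm{Var}_\infty(\xi_n)$ termwise and the per-summand series is bounded by \eqref{series}; conversely one has to be careful that the cited result indeed only needs this weaker-looking but equivalent summability (which it does — this is precisely the point of the "tail series" formulation). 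Once this bookkeeping is in place, the conclusion $T_{z_n}/m(z_n)=S_n/a_n\to 1$ $\mathbb{P}_\infty$-a.s.\ follows directly, which is \eqref{astail}.
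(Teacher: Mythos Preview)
Your proposal is correct and follows essentially the same approach as the paper: decompose $T_{z_n}$ under $\mathbb{P}_\infty$ as the tail series $\sum_{k\ge n}\xi_k$ of independent increments $\xi_k=T_{z_k}-T_{z_{k+1}}$, apply Klesov's tail-series strong law, and reduce the per-term condition $\sum_n \mathrm{Var}_\infty(\xi_n)/m(z_n)^2<\infty$ to \eqref{series} via the termwise bound $\mathrm{Var}_\infty(\xi_n)\le \mathrm{Var}_\infty(T_{z_n})$. The paper obtains this last inequality in the same way you do, writing $\mathrm{Var}_\infty(\tau_n)=\mathrm{Var}_\infty(T_{z_n})-\mathrm{Var}_\infty(T_{z_{n+1}})$ by independence.
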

	\begin{proof}
		Since the process has no negative jumps, under $\mbb{P}_\infty$, for any $n\geq 1$, $T_{z_n}=\sum_{k\geq n}\tau_k$ with $\tau_k:=T_{z_{k}}-T_{z_{k+1}}$ and by the strong Markov property of the process $(X_t,t\geq 0)$, the random variables $(\tau_k, k\geq 1)$ are independent with the same distribution as  $T_{z_{k+1}}\circ\theta_{T_{z_{k}}}$, where $\theta$ is the shift operator (see e.g. \cite[p.146]{Kallenberg}). Under $\mbb{P}_\infty$, set for any $n\geq 1$,
		$\zeta_n:=T_{z_n}-m(z_n)=\sum_{k\geq n}(\tau_k-\mathbb{E}_\infty[\tau_k]).$
		Applying \cite[Proposition 1]{MR704328} with, in the notation there, $t=2$, $b_k=1/m(z_k)$ and $\xi_k=\tau_k-\mathbb{E}_\infty[\tau_k]$,
		if $\sum_{n=1}^{\infty}\frac{\text{Var}_\infty(\tau_n)}{m(z_{n})^2}<\infty$, then
		$\frac{\zeta_n}{m(z_n)}=\frac{T_{z_n}}{m(z_n)}-1\underset{n\rightarrow \infty}{\longrightarrow} 0 \ \ \mathbb{P}_\infty$-a.s.
		Since $T_{z_{n+1}}+\tau_n=T_{z_{n}}$ and $\tau_n$ is independent of $T_{z_{n+1}}$,  $\text{Var}_\infty(\tau_{n})=\text{Var}_\infty(T_{z_n})-\text{Var}_\infty(T_{z_{n+1}})\leq \text{Var}_\infty(T_{z_n})$ and
		$\frac{\text{Var}_\infty(\tau_n)}{m(z_{n})^2}\leq \frac{\text{Var}_\infty(T_{z_n})}{m(z_{n})^2}$. This concludes the proof.
	\end{proof}
	We now deal with the proof of Theorem \ref{almost_sure_H3}.
	\begin{proof}[Proof of Theorem \ref{almost_sure_H3}]
		We start by finding a sequence $(z_n,n\geq 1)$ such that under the assumption $\mathbb{H}_4$, the series \eqref{series} converges.
		Recall from Lemma \ref{equivmphi} that under $\mathbb{H}_1$, $m(z)\sim \varphi(z)$ as $z$ goes to $\infty$. One thus only needs to study $\sum_{n\geq 1}\frac{\text{Var}_{\infty}(T_{z_n})}{\varphi(z_n)^2}$. Condition $\mathbb{H}_4$ enables us to control precisely the variance of $T_{z_n}$ under $\mathbb{P}_\infty$ for large $n$. Applying  \eqref{controlvar}
		with $h=1+p(z_n)$ (so that $(h-1)z_n=z_np(z_n)$) provides
		\begin{equation}\label{boundforzn}\frac{\text{Var}_\infty(T_{z_n})}{ 2\varphi(z_n)^2}\leq \gamma \Delta(z_np(z_n))+\frac{\varphi(z_n)-\varphi(z_n(1+p(z_n)))}{\varphi(z_n)}+\gamma V(z_n,p(z_n)).
		\end{equation}
		By the conditions (i) and (iii) in $\mathbb{H}_4$,
		$\frac{\text{Var}_\infty(T_{z_n})}{2\varphi(z_n)^2}\leq C\Delta(z_np(z_n))$
		for some constant $C>0$. It is therefore sufficient to identify a sequence $(z_n,n\geq 1)$ such that under the assumption $\mathbb{H}_4$-(ii), $\sum_{n=1}^{\infty}\Delta(z_np(z_n))<\infty$. For any fixed constant $c>1$, and any $c_1<\frac{1}{\gamma}$, for $z$ large enough
		$$m(z)=\int_{z}^{\infty}\frac{W(x-z)}{R(x)}\ddr x\geq W((c-1)z)\int_{cz}^{\infty}\frac{\ddr x}{R(x)}\geq c_1 \gamma\varphi(cz).$$
		Note that $z\mapsto m(z)$ is continuous and strictly decreasing in $z$, and $m(z)\rightarrow 0+$ as $z\rightarrow\infty$. Given $0<\rho<1$ and $z_0$ large enough, for $n\geq 1$  recursively define
		\begin{equation} \label{increasez_n} z_n:=\inf\{z>z_{n-1}: m(z)=(1-\rho)m(z_{n-1})\}.
		\end{equation}
		Then by continuity $m(z_n)=(1-\rho)^n m(z_0)$. Note that $m(z_n)\geq c_1 \gamma \varphi(cz_n)$ for all $n\geq0$. Then
		$z_n\geq y_n:=\frac{1}{c}\varphi^{-1}\left(c_2(1-\rho)^{n}\right), \,\,\, n=1,2, \dots $
		with $c_2=\frac{m(z_0)}{\gamma c_1}$. Since $z\mapsto zp(z)$ is non-decreasing and $z\mapsto \Delta(z)$ is non-increasing,  for any $n$ we have $\Delta(z_np(z_n))\leq \Delta(y_np(y_n))$.
		
		We now show  that $\sum_{n=1}^{\infty}\Delta(y_np(y_n))<\infty$. This will follow by comparison with an integral. set $\beta:=-\log(1-\rho)$. For any $z\in ]n,n+1[$, we have $y_{n+1}\geq \frac{1}{c}\varphi^{-1}(c_2e^{-\beta z})=:u(z)$. Therefore,
		if
		\begin{equation}\label{integral1}
		\int^{\infty}\Delta(u(z)p(u(z)))\ddr z<\infty,
		\end{equation}
		then $\sum_{n=1}^{\infty}\Delta(y_np(y_n))<\infty$. By changing of variable and setting $v:=u(z)$, one can check that $\ddr z=\frac{c}{\beta}\frac{\ddr v}{\varphi(cv)R(cv)}$ and (\ref{integral1}) holds if and only if
		$\int^{\infty}\frac{\Delta(vp(v))}{\varphi(cv)R(cv)}\ddr v<\infty$
		which is  $\mathbb{H}_4$-(ii). Lemma \ref{taillemma} entails the almost-sure convergence \eqref{astail} along the sequence \eqref{increasez_n}.
		
		To conclude, we now show the almost-sure convergence along any sequence. For any $z>z_0$ such that $z\in [z_{n-1},z_n[$ we have
		$$\frac{T_{z_{n-1}}}{m(z_{n-1})}\geq \frac{T_{z}}{m(z_{n-1})}\geq (1-\rho)\frac{T_z}{m(z)}\geq (1-\rho)\frac{T_{z_n}}{m(z_{n-1})}=(1-\rho)^2\frac{T_{z_n}}{m(z_{n})}.$$
		Therefore,
		$1-\rho\leq \underset{z\rightarrow \infty}{\liminf} \frac{T_z}{m(z)}\leq \underset{z\rightarrow \infty}{\limsup} \frac{T_z}{m(z)}\leq \frac{1}{1-\rho}$.
		Since $\rho$ can be arbitrarily close to $0$, the almost-sure convergence to $1$ is established.
	\end{proof}
	\begin{remark}\label{alternativecond}
		The condition $\mathbb{H}_4$-(iii) requires somehow that the fluctuations of the L\'evy process prevail over those of the function $R$. We provide here an alternative condition which does not involve the function $\Delta$ but only $R$. Set the condition $\mathbb{H}_4$-$\text{(iii')}$:
		
		There exists a decreasing function $\bar{V}$ and $c>1$ such that
		$V(z,p(z))\leq \bar{V}(z)$  for large enough  $z$ and $ \int^{\infty}\frac{\bar{V}(v)}{\varphi(cv)R(cv)}\ddr v<\infty$.
		By comparing the series $\sum_{n\geq 1} V(z_n,p(z_n))$ with the integral above, we check  that under $\mathbb{H}_4$-$\text{(iii')}$, $\sum_{n\geq 1} V(z_n,p(z_n))<\infty$. By repeating the arguments of the proof, from \eqref{boundforzn}, we get that  $\mathbb{H}_4$-(iii) can be replaced by $\mathbb{H}_4$-(iii') in Theorem \ref{almost_sure_H3}.
	\end{remark}
	Condition $\mathbb{H}_4$ is difficult to verify in general. The next lemma further simplifies  $\mathbb{H}_4$-(i) and $\mathbb{H}_4$-(ii)  for $R$  regularly varying.
	\begin{lemma}\label{regularlyvaryingR} Assume  that $R$ is regularly varying with index $\theta>1$. If there exists a map $p:\mbb{R}_+\to\mbb{R}_+$ such that $z\mapsto zp(z)$ is non-decreasing, $p(z)\underset{z\rightarrow \infty}{\longrightarrow} 0$, $\frac{p(z)}{\Delta(zp(z))}$ is bounded for large enough $z$ and $\int^{\infty}\frac{\Delta(zp(z))}{z}\ddr z<\infty$, then both $\mathbb{H}_4$-(i) and $\mathbb{H}_4$-(ii)  hold.
	\end{lemma}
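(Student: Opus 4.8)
The plan is to reduce both $\mathbb{H}_4$-(i) and $\mathbb{H}_4$-(ii) to the two standing hypotheses $\int^{\infty}\Delta(zp(z))/z\,\ddr z<\infty$ and $p(z)=O(\Delta(zp(z)))$, the bridge being Karamata's theorem applied to the regularly varying $R$. Since $R$ is regularly varying at $\infty$ with index $\theta>1$, the function $1/R$ is regularly varying with index $-\theta<-1$, and Karamata's theorem (\cite[Proposition 1.5.10]{MR1015093}) gives $\int_{z}^{\infty}\ddr x/R(x)\sim z/((\theta-1)R(z))$ as $z\to\infty$. Recalling $\varphi(z)=\frac1\gamma\int_z^\infty \ddr x/R(x)$, this yields
$$\varphi(z)\underset{z\to\infty}{\sim}\frac{z}{\gamma(\theta-1)R(z)},\qquad\text{hence}\qquad\varphi(z)R(z)\underset{z\to\infty}{\sim}\frac{z}{\gamma(\theta-1)}.$$
In particular $z\mapsto\varphi(z)R(z)$ is regularly varying with index $1$, so for any fixed $c>1$ one has $1/(\varphi(cz)R(cz))=O(1/z)$ as $z\to\infty$.

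For $\mathbb{H}_4$-(ii), I would simply multiply the last estimate by the (non-increasing, hence locally bounded) factor $\Delta(zp(z))$ to obtain $\Delta(zp(z))/(\varphi(cz)R(cz))=O\!\left(\Delta(zp(z))/z\right)$ as $z\to\infty$. Integrability over a neighbourhood of $\infty$ then follows at once from the assumed $\int^{\infty}\Delta(zp(z))/z\,\ddr z<\infty$, and this works for every $c>1$; so $\mathbb{H}_4$-(ii) holds.

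For $\mathbb{H}_4$-(i), I would use $\varphi'=-1/(\gamma R)$ to write
$$0\le\varphi(z)-\varphi(z+zp(z))=\int_{z}^{z+zp(z)}\frac{\ddr x}{\gamma R(x)}.$$
Because $p(z)\to0$, Potter's bounds (\cite[Theorem 1.5.6]{MR1015093}) provide, for any $\delta\in(0,\theta)$ and all sufficiently large $z$, the uniform lower bound $R(x)\ge(1-\delta)R(z)$ for every $x\ge z$; hence the integral above is at most $zp(z)/(\gamma(1-\delta)R(z))$, which by the first display is $O(\varphi(z)p(z))$ as $z\to\infty$. Finally, the hypothesis that $p(z)/\Delta(zp(z))$ is bounded for large $z$ converts $O(\varphi(z)p(z))$ into $O(\varphi(z)\Delta(zp(z)))$, which is exactly $\mathbb{H}_4$-(i).

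The whole argument is essentially bookkeeping with regular variation, so I do not anticipate a real obstacle; the only point needing a little care is that the comparison $R(x)\ge(1-\delta)R(z)$ must be uniform over the entire half-line $x\ge z$ rather than over the shrinking window $[z,z(1+p(z))]$, and this is precisely what Potter's theorem delivers (alternatively, since $1+p(z)\in[1,2]$ eventually, the uniform convergence theorem on the compact set $[1,2]$ would serve just as well).
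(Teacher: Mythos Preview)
The proposal is correct and follows essentially the same approach as the paper: both use Karamata's theorem to get $\varphi(z)R(z)\sim cz$, bound $\varphi(z)-\varphi(z+zp(z))$ by a constant times $zp(z)/R(z)$ via a uniform control on $R(x)/R(z)$ for $x\in[z,z(1+p(z))]$, and then invoke the boundedness of $p/\Delta(zp)$. The only cosmetic difference is that the paper decomposes $R(x)=x^{\theta}\ell(x)$ and applies the uniform convergence theorem to the slowly varying part $\ell$, whereas you obtain the same bound directly from Potter's theorem applied to $R$.
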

	\begin{proof}
		A simple application of the mean value theorem provides the following useful bound for condition $\mathbb{H}_4$-(i):
		\begin{equation}\label{bound-dp}
		d_p(z):=\frac{1}{\Delta(zp(z))}\left(\frac{\varphi(z)-\varphi(z+zp(z))}{\varphi(z)}\right)\!\leq \frac{zp(z)}{\Delta(zp(z))\varphi(z)}\sup_{u\in ]z,z(1+p(z))[}\frac{1}{R(u)}.
		\end{equation}
		Since $R$ is regularly varying at $\infty$ with index $\theta$,  $R$ takes the form $R(x)=x^{\theta}\ell(x)$, where $\ell$ is a slowly varying function at $\infty$. By (\ref{bound-dp}), one has
		\begin{align*}
		d_p(z)
		&\leq \frac{zp(z)}{\Delta(zp(z))}\frac{1}{\varphi(z)R(z)}\sup_{u\in ]z,z(1+p(z))[}\frac{\ell(z)}{\ell(u)}.
		\end{align*}
		By the uniform convergence theorem for slowly varying function, see e.g. \cite[Theorem 1.2.1]{MR1015093}, we have $\sup_{u\in ]z,z(1+p(z))[}\frac{\ell(z)}{\ell(u)}\underset{z\rightarrow \infty}{\longrightarrow} 1$. Moreover, $\varphi(z)R(z)\underset{z\rightarrow \infty}{\sim} (\theta-1)z$. Therefore, the map $d_p$ is bounded as soon as $z\mapsto \frac{p(z)}{\Delta(zp(z))}$ is bounded. The condition $\mathbb{H}_4$-(ii) is readily equivalent to  $\int^{\infty}\frac{\Delta(zp(z))}{z}\ddr z<\infty$.
	\end{proof}
	
	We now apply Lemma \ref{regularlyvaryingR} and Theorem \ref{almost_sure_H3} to different branching mechanisms (those in the setting of Theorem \ref{regularvarR}). Recall that we denote by $\pi$ the L\'evy measure.
	\begin{corol}\label{stablelikepsiandcramer}
		Assume $\gamma>0$ and $R$ regularly varying at $\infty$ with index $\theta>1$.  Then \eqref{cdicondition} is satisfied. Set the conditions
		\begin{enumerate}
			\item[(a)] $\Psi(\lambda)-\gamma \lambda\sim c\lambda^{1+\delta} \,\,\, \text{ as}\,\,\, \lambda\rightarrow 0+$ for some $\delta\in (0,1)$, $c>0$ and\\
			\begin{equation}\label{stablevalleys}V(z,\frac{1}{\sqrt{z}}):=\sup_{u\geq z}\left(\frac{R(u)}{R(u+\sqrt{z})}-1\right)_+=O(z^{-\delta/2})
			\quad\mbox{as}~z\to\infty;
			\end{equation}
			\item[(b)] There exists $\nu\in( 0,\infty)$ such that $\Psi(-\nu)=0$, $\int^1_0u\pi(\ddr u)<\infty$ and
			\begin{equation}\label{cramervalleys}V(z,\frac{2 \ln \ln z}{\nu z}):=\sup_{u\geq z}\left(\frac{R(u)}{R(u+\frac{2}{\nu}\ln \ln(z))}-1\right)_+=O(\ln(z)^{-2})\quad\mbox{as}~z\to\infty.
			\end{equation}
		\end{enumerate}
		If either condition (a) or condition (b) is satisfied, then
		$\mathbb{P}_\infty$-almost-surely,
		$\frac{T_b}{\mathbb{E}_\infty[T_b]}\underset{b\rightarrow \infty}{\longrightarrow} 1$.
	\end{corol}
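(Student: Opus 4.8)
The plan is to verify the condition $\mathbb{H}_4$ of Theorem~\ref{almost_sure_H3} in each of the two cases and then invoke that theorem directly. Since $R$ is regularly varying at $\infty$ with index $\theta>1$, Karamata's theorem gives $\int^\infty R(x)^{-1}\,\ddr x<\infty$, hence \eqref{cdicondition} holds (recall $x\Psi(1/x)\to\gamma>0$); moreover $\varphi$ is then regularly varying with index $1-\theta<0$, so $\mathbb{H}_1$ and $\mathbb{H}_2$ are fulfilled and $m(b)\sim\varphi(b)$ by Lemma~\ref{equivmphi}. By Lemma~\ref{regularlyvaryingR}, in order to obtain $\mathbb{H}_4$-(i) and $\mathbb{H}_4$-(ii) it suffices to exhibit a function $p$ with $z\mapsto zp(z)$ non-decreasing, $p(z)\to0$, $p(z)/\Delta(zp(z))$ bounded for large $z$, and $\int^\infty\Delta(zp(z))\,z^{-1}\,\ddr z<\infty$, where $\Delta(z)=W(\infty)-W(z)$; it then remains to check $\mathbb{H}_4$-(iii), namely $V(z,p(z))=O(\Delta(zp(z)))$, and this will turn out to be exactly hypothesis \eqref{stablevalleys}, resp.\ \eqref{cramervalleys}.

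The key quantitative input is therefore the decay rate of $\Delta$, which I would read off the Laplace transform identity coming from \eqref{defscale} and $W(\infty)=1/\gamma$,
\[
\int_0^\infty e^{-qz}\Delta(z)\,\ddr z=\frac{1}{\gamma q}-\frac{1}{\Psi(q)}=\frac{\Psi(q)-\gamma q}{\gamma q\,\Psi(q)}.
\]
In case (a), $\Psi(q)-\gamma q\sim cq^{1+\delta}$ and $\Psi(q)\sim\gamma q$ as $q\to0+$, so the right-hand side is $\sim(c/\gamma^2)q^{\delta-1}$; since $\Delta$ is non-increasing, Karamata's Tauberian theorem combined with the monotone density theorem (see \cite[\S1.7]{MR1015093}) yields $\Delta(z)\sim\kappa z^{-\delta}$ as $z\to\infty$ for some $\kappa>0$, in particular $\Delta$ is regularly varying with index $-\delta$. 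In case (b), the Cram\'er condition $\Psi(-\nu)=0$ together with convexity of $\Psi$ (which forces $\Psi'(-\nu)<0$) makes $q\mapsto\tfrac1{\gamma q}-\tfrac1{\Psi(q)}$ extend analytically across $0$ with a simple pole at $q=-\nu$; this leads, as is classical for spectrally one-sided L\'evy processes under Cram\'er's condition (see Bertoin \cite{MR1406564} and Kyprianou \cite[Chapters 7 and 8]{MR3155252}, the assumption $\int_0^1u\,\pi(\ddr u)<\infty$ securing the clean equivalent), to $\Delta(z)\sim\kappa' e^{-\nu z}$ as $z\to\infty$ for some $\kappa'>0$.

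With these asymptotics, I would take $p(z)=z^{-1/2}$ in case (a) and $p(z)=\tfrac{2\ln\ln z}{\nu z}$ in case (b). In case (a): $zp(z)=\sqrt z$ is non-decreasing, $p(z)\to0$, and $\Delta(zp(z))$ is of order $z^{-\delta/2}$, so $p(z)/\Delta(zp(z))$ is of order $z^{(\delta-1)/2}\to0$ and $\int^\infty\Delta(zp(z))z^{-1}\,\ddr z$ is of order $\int^\infty z^{-1-\delta/2}\,\ddr z<\infty$; while $V(z,p(z))=V(z,1/\sqrt z)=\sup_{u\ge z}\bigl(R(u)/R(u+\sqrt z)-1\bigr)_+=O(z^{-\delta/2})=O(\Delta(zp(z)))$ by \eqref{stablevalleys}, which is $\mathbb{H}_4$-(iii). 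In case (b): $zp(z)=\tfrac2\nu\ln\ln z$ is non-decreasing for large $z$, $p(z)\to0$, and $\Delta(zp(z))$ is of order $e^{-\nu(2/\nu)\ln\ln z}=(\ln z)^{-2}$, so $p(z)/\Delta(zp(z))$ is of order $z^{-1}(\ln z)^2\ln\ln z\to0$ and $\int^\infty\Delta(zp(z))z^{-1}\,\ddr z$ is of order $\int^\infty(\ln z)^{-2}z^{-1}\,\ddr z<\infty$ (substitute $u=\ln z$); while $V(z,p(z))=\sup_{u\ge z}\bigl(R(u)/R(u+\tfrac2\nu\ln\ln z)-1\bigr)_+=O((\ln z)^{-2})=O(\Delta(zp(z)))$ by \eqref{cramervalleys}, again $\mathbb{H}_4$-(iii). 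Lemma~\ref{regularlyvaryingR} then supplies $\mathbb{H}_4$-(i) and $\mathbb{H}_4$-(ii), so $\mathbb{H}_4$ holds in both cases and Theorem~\ref{almost_sure_H3} gives $T_b/\mathbb{E}_\infty[T_b]\to1$ $\mathbb{P}_\infty$-a.s.

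I expect the main obstacle to be the second paragraph, i.e.\ pinning down the exact order of $\Delta(z)=W(\infty)-W(z)$: in case (a) one must combine the Tauberian theorem with the monotone density theorem (legitimate because $\Delta$ is monotone), and in case (b), where the monotone density theorem is unavailable, one must argue the exponential equivalent directly from the simple pole of the Laplace transform, which is where the extra regularity of $\pi$ enters. The remaining verifications are routine manipulations with regularly varying functions and the $V$-hypotheses.
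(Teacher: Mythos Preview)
Your proposal is correct and follows essentially the same route as the paper: reduce to $\mathbb{H}_4$ via Lemma~\ref{regularlyvaryingR}, determine the decay of $\Delta(z)=W(\infty)-W(z)$ (by a Tauberian argument in case~(a), by Cram\'er's theorem in case~(b)), take $p(z)=z^{-1/2}$ resp.\ $p(z)=\tfrac{2\ln\ln z}{\nu z}$, and check the three bullets. The only cosmetic difference is that in case~(b) the paper invokes the probabilistic form of Cram\'er's theorem directly, $\lim_{x\to\infty}e^{\nu x}\mathbb{P}_0(\tau_x^+<\infty)=\Psi'(0+)/\Psi'(-\nu)$, rather than arguing via the pole of the Laplace transform; either way the assumption $\int_0^1 u\,\pi(\ddr u)<\infty$ is what guarantees a nonzero constant $\kappa'$, and you should make that dependence explicit since convexity alone does not rule out $|\Psi'(-\nu)|=\infty$.
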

	\begin{proof}
		By Karamata's theorem, see e.g. \cite[Proposition 1.5.10]{MR1015093},  \eqref{cdicondition} is satisfied. We next verify the conditions of Lemma \ref{regularlyvaryingR}. Assume first that condition \textit{(a)} holds. By Lemma \ref{l1},
		\begin{equation*}
		\begin{split}
		&\int_0^\infty (W(\infty)-W(x))e^{-\lambda x}\ddr x=\frac{W(\infty)}{\lambda}-\frac{1}{\Psi(\lambda)}  \\
		&\sim \frac{(\lambda/W(\infty)+c\lambda^{1+\delta})W(\infty)-\lambda}{\lambda(\lambda/W(\infty)+c\lambda^{1+\delta})}
		\sim W(\infty)^2c\lambda^{\delta-1} \quad\text{as}\quad \lambda\rightarrow 0.
		\end{split}
		\end{equation*}
		By the Tauberian theorem, see e.g. \cite[p.10]{MR1406564}, we have
		$\Delta(x)=W(\infty)-W(x)\sim c_1x^{-\delta}$ as $ x\rightarrow\infty$
		with $c_1=\frac{cW(\infty)^2}{\Gamma(2-\delta)}$.
		Then $\Delta(zp(z))\underset{z \rightarrow \infty}{\sim} c_1 (zp(z))^{-\delta}$ and $\frac{p(z)}{\Delta(zp(z))}\underset{z\rightarrow \infty}{\sim} \frac{1}{c_1}z^{\delta}p(z)^{ 1+\delta}$.
		Choose $p(z)=\frac{1}{\sqrt{z}}$. Then $z\mapsto zp(z)$ is non-decreasing, and $\frac{p(z)}{\Delta(zp(z))}\underset{z\rightarrow \infty}{\sim} \frac{1}{c_1}z^{\delta/2-1/2}$ which is bounded. Since $\int^{\infty}\frac{\ddr z}{z^{\delta/2+1}}<\infty$, we have that  $\int^{\infty}\frac{\Delta(zp(z))}{z}\ddr z<\infty$ and we conclude by Lemma \ref{regularlyvaryingR} that $\mathbb{H}_4$-(i) and $\mathbb{H}_4$-(ii) hold true. Condition \eqref{stablevalleys} corresponds to $\mathbb{H}_4$-(iii) for $p(z)=z^{-1/2}$. Since $R$ is regularly varying with index $\theta>1$, one can readily check that $\varphi$ satisfies $\mathbb{H}_1$. Theorem \ref{almost_sure_H3} thus applies.\\
		
		Assume now that condition \textrm{(b)} holds. Recall from Lemma \ref{l1} that for any $x\geq 0$, $\mathbb{P}_0(\tau_x^{+}<\infty)=\frac{W(\infty)-W(x)}{W(\infty)}$.	
		By Cram\'er's theorem, see e.g. \cite[Theorem 7.6]{MR3155252}, we have
		$$\underset{x\rightarrow \infty}{\lim} e^{\nu x}\mathbb{P}_0(\tau_x^{+}<\infty)=\frac{\Psi'(0+)}{\Psi'(-\nu)}=:c_\nu\in [0,\infty)$$
		with $c_\nu=0$ if $\Psi'(-\nu)=\infty$. We can check from \eqref{LK} that the assumption $\int_0^1 u\pi(\ddr u)<\infty$ entails $\Psi'(-\nu)<\infty$. Therefore, $c_\nu>0$ and $W(\infty)-W(x)\sim \frac{c_\nu}{\gamma} e^{-\nu x}$
		as $x$ goes to $\infty$. Choose $p(z)=\frac{2\ln \ln z}{\nu z}$ for large $z$. The map $z\mapsto zp(z)$ is non-decreasing, $p(z)\underset{z\rightarrow \infty}{\longrightarrow} 0$ and moreover $\Delta(zp(z))\sim \frac{c_\nu}{\gamma\ln(z)^{2}}$ as $z$ goes to $\infty$. Therefore $\frac{p(z)}{\Delta(zp(z))}\sim \frac{2}{\gamma\nu  c_\nu}\frac{\ln\ln z}{z}\ln(z)^{2}$ as $z$ goes to $\infty$ and $z\mapsto \frac{p(z)}{\Delta(zp(z))}$ is bounded for $z$ large enough. Note that  $\int^{\infty}\frac{\ddr z}{z\ln(z)^2 }<\infty$ and then
		$\int^{\infty}\frac{\Delta(zp(z))}{z}\ddr z<\infty$. By Lemma \ref{regularlyvaryingR}, $\mathbb{H}_4$-(i) and $\mathbb{H}_4$-(ii) are satisfied. Condition \eqref{cramervalleys} corresponds to $\mathbb{H}_4$-(iii) with $p(z)=\frac{2\ln \ln z}{\nu z}$ and finally Theorem \ref{almost_sure_H3} applies.
	\end{proof}
	We close this section by providing an example of a regularly varying branching rate $R$ with valleys in any neighborhood of $\infty$.
	\begin{example}\label{examplenonincreas} Let $\theta>1$, $0<v<1$ and $x_0>0$. Assume that $R(x)=x^{\theta}\left(2+\frac{\cos x}{x^{v}}\right)$ for all $x\geq x_0$. The function $R$ is not monotonic on $(b,\infty)$ for any $b>x_0$ and simple calculations provide that for any $\rho>0$ and large enough $z$, $V(z,\rho)\leq C\rho z^{1-v}$ for some constant $C>0$. One can check that \eqref{cramervalleys} is always fulfilled. Moreover, $V(z,1/\sqrt z)\leq C z^{1/2-v}$ and \eqref{stablevalleys} holds when $v\geq \frac{1+\delta}{2}$.
		
		As noticed  in the proof of Corollary \ref{stablelikepsiandcramer}, $\mathbb{H}_4$-(i) and $\mathbb{H}_4$-(ii) are verified and \eqref{stablevalleys} corresponds to $\mathbb{H}_4$-(iii). Instead of \eqref{stablevalleys}, we may thus verify $\mathbb{H}_4$-(iii') as defined in Remark \ref{alternativecond} . Set $\bar{V}(z):=Cz^{1/2-v}$, when $v>1/2$, $\bar{V}$ is decreasing and $\int^{\infty}\frac{\bar{V}(z)}{z}\ddr z<\infty$. Therefore, when $v>1/2$, $\mathbb{H}_4$-(iii') holds and one concludes by Theorem \ref{almost_sure_H3} that $\mathbb{P}_\infty$-almost-surely, $\frac{T_b}{\mathbb{E}_\infty[T_b]}\underset{b\rightarrow \infty}{\longrightarrow} 1$.
	\end{example}
	\section{Speeds of coming down from infinity}\label{speeds}
	In this final section, we show how to invert the results obtained on $(T_b, b\geq 0)$ in the previous section to study the speed of coming down from infinity. We prove Theorem \ref{regularCDI}, Theorem  \ref{regularvarR} and Proposition \ref{fastCDI}.
	\subsection{Running infimum at small times}
	Recall Theorem \ref{main1}. Under \eqref{cdicondition}, the function $m$ is well-defined, positive continuous and strictly decreasing. We write $m^{-1}$ for its inverse function. Note that
	$\mbb{E}_\infty(T_{m^{-1}(t)})=t $ and $m^{-1}(t)\rightarrow\infty $ as $t\rightarrow 0+$.
	Let
	${\underline X}_t:=\inf_{0\leq s\leq t}X_s, \,\,\, t\geq 0$
	be the running minimum process for $X$. Under $\mbb{P}_\infty$, the asymptotic behavior of $T_b$ for large $b$ is associated to the small time asymptotic behavior for ${\underline X}_t$.
	\begin{lemma}\label{liminf_a}
		Assume that \eqref{cdicondition} holds.
		\begin{itemize}
			\item[(a)]
			Suppose that for any $h>1$,
			$\liminf_{x\rightarrow\infty}\frac{m(x)}{m(hx)}>1$
			and that we have
			\begin{equation}\label{converge}
			T_b/m(b)\rightarrow 1 \text{ in } \mbb{P}_\infty\text{-probability as  } b\rightarrow\infty.
			\end{equation}
			Then under  $\mbb{P}_\infty$,
			$\frac{{\underline X}_t}{m^{-1}(t)}\rightarrow 1 \,\,\,\text{in probability as} \,\,\, t\rightarrow 0+.$
			\item[(b)]
			Suppose that for any $h>1$,
			$\liminf_{x\rightarrow\infty}\frac{m(x)}{m(hx)}=\infty$  
			and that ${T_b}/{m(b)}$ converges in law under $\mbb{P}_\infty$. Then under $\mbb{P}_\infty $,
			$\frac{{\underline X}_t}{m^{-1}(t)}\rightarrow 1$ in probability as $ t\rightarrow 0+$.
		\end{itemize}
	\end{lemma}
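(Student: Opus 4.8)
The whole argument rests on a deterministic correspondence between the first passage times of $X$ and its running infimum. Since $X$ has no negative jumps, for every $b>0$ and $t>0$ one has the inclusions $\{T_b<t\}\subseteq\{\underline X_t<b\}\subseteq\{T_b\le t\}$, and in particular $\{\underline X_t\ge b\}\subseteq\{T_b\ge t\}$. On the other hand, by Theorem \ref{main1} and the facts recalled just before the lemma, $m$ is continuous and strictly decreasing with $m(b)=\mathbb E_\infty[T_b]$, and $m(b)\le\int_b^\infty W(x)/R(x)\,\ddr x\to0$ as $b\to\infty$ by \eqref{cdicondition} and \eqref{boundW}; hence $m^{-1}$ is well defined on a right neighbourhood of $0$, $m(m^{-1}(t))=t$ there, and $m^{-1}(t)\uparrow\infty$ as $t\downarrow0$. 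These two ingredients turn the hypotheses on $T_b$, $b\to\infty$, into the asserted statement on $\underline X_t$, $t\to0+$.

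Fix $\ep\in(0,1)$; it suffices to prove $\mathbb P_\infty(\underline X_t\ge(1+\ep)m^{-1}(t))\to0$ and $\mathbb P_\infty(\underline X_t\le(1-\ep)m^{-1}(t))\to0$ as $t\to0+$. Set $b_t:=(1+\ep)m^{-1}(t)$ and $\tilde b_t:=(1-\ep/2)m^{-1}(t)$, both tending to $\infty$; since $(1-\ep)m^{-1}(t)<\tilde b_t$, the inclusions above give
\[
\mathbb P_\infty\big(\underline X_t\ge(1+\ep)m^{-1}(t)\big)\le\mathbb P_\infty\!\Big(\tfrac{T_{b_t}}{m(b_t)}\ge\tfrac{t}{m(b_t)}\Big),\qquad
\mathbb P_\infty\big(\underline X_t\le(1-\ep)m^{-1}(t)\big)\le\mathbb P_\infty\!\Big(\tfrac{T_{\tilde b_t}}{m(\tilde b_t)}\le\tfrac{t}{m(\tilde b_t)}\Big).
\]
Writing $y:=m^{-1}(t)\to\infty$ and $h:=1/(1-\ep/2)>1$, and using $m(m^{-1}(t))=t$, the comparison thresholds are exactly
\[
\frac{t}{m(b_t)}=\frac{m(y)}{m((1+\ep)y)},\qquad \frac{t}{m(\tilde b_t)}=\frac{m(h\tilde b_t)}{m(\tilde b_t)}=\Big(\frac{m(\tilde b_t)}{m(h\tilde b_t)}\Big)^{-1}.
\]
In case (a), the hypothesis $\liminf_{x\to\infty}m(x)/m(hx)>1$, applied with $h=1+\ep$ and with $h=1/(1-\ep/2)$, produces a $\delta>0$ so that, for $t$ small, $t/m(b_t)\ge1+\delta$ and $t/m(\tilde b_t)\le(1+\delta)^{-1}$; since $b_t,\tilde b_t\to\infty$ and $T_b/m(b)\to1$ in $\mathbb P_\infty$-probability, the bounds $\mathbb P_\infty(T_{b_t}/m(b_t)\ge1+\delta)$ and $\mathbb P_\infty(T_{\tilde b_t}/m(\tilde b_t)\le(1+\delta)^{-1})$ tend to $0$. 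In case (b), $\liminf_{x\to\infty}m(x)/m(hx)=\infty$ forces $t/m(b_t)\to\infty$ and $t/m(\tilde b_t)\to0$ as $t\to0+$; letting $\xi$ denote the in-law limit of $T_b/m(b)$ (a proper random variable, almost surely positive and finite in the situations where the lemma is applied), for any $\eta>0$ pick continuity points $R$ large and $r>0$ small with $\mathbb P(\xi>R)<\eta$ and $\mathbb P(\xi\le r)<\eta$; then for $t$ small $\mathbb P_\infty(T_{b_t}/m(b_t)\ge t/m(b_t))\le\mathbb P_\infty(T_{b_t}/m(b_t)>R)\to\mathbb P(\xi>R)<\eta$, and symmetrically $\mathbb P_\infty(T_{\tilde b_t}/m(\tilde b_t)\le t/m(\tilde b_t))\le\mathbb P_\infty(T_{\tilde b_t}/m(\tilde b_t)\le r)\to\mathbb P(\xi\le r)<\eta$. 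Letting $\eta\downarrow0$ closes both parts.

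I expect the only delicate point to be the opening correspondence: the inclusions relating $\{T_b<t\}$, $\{\underline X_t<b\}$ and $\{T_b\le t\}$ hinge on the absence of negative jumps of $X$ (downward passages are continuous), and the boundary event $\{T_b=t\}$ must be handled — this is why I build in the buffer $\ep/2$ on the lower side and only ever compare against non-strict/strict thresholds, so that continuity and strict monotonicity of $m$ absorb the slack. A second, milder point concerns case (b) only: convergence in law gives no control of $T_b/m(b)$ near a fixed constant, so the divergence $\liminf=\infty$ is genuinely needed to drive the thresholds $t/m(b_t)$, $t/m(\tilde b_t)$ all the way to $\infty$ and $0$; there one also uses that the limit law charges neither $\{\infty\}$ (automatic) nor $\{0\}$ (valid for the limit $S_{\alpha,\theta}^{1/(\theta-\alpha)}$ of Theorem \ref{stablecriticalrv}).
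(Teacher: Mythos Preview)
Your proof is correct and follows essentially the same route as the paper: both arguments exploit the equivalence $\{\underline X_t\ge b\}\!\iff\!\{T_b\ge t\}$ (up to the null event $\{T_b=t\}$), rewrite the relevant probabilities as $\mathbb P_\infty\big(T_{b_t}/m(b_t)\gtrless t/m(b_t)\big)$ with $b_t$ a multiple of $m^{-1}(t)$, and then use the hypothesis on $m(x)/m(hx)$ to push the threshold $t/m(b_t)$ away from $1$ (case~(a)) or to $0$ and $\infty$ (case~(b)). Your treatment of~(b) is in fact more careful than the paper's, which simply says ``the proof is similar''; you correctly flag that one tacitly needs the in-law limit to put no mass at $0$, a point the paper leaves implicit.
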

	\begin{proof}
		We show (a). By assumption, for any $h>0$, there exists a constant $\underbar{c}_h\in(1,\infty)$ such that $\liminf_{x\rightarrow\infty}\frac{m(x)}{m(hx)}>\underbar{c}_h.$
		For $h>1, 0<\rho<\underbar{c}_h, \delta>0$ and $t>0$ small enough,
		\begin{equation*}
		\begin{split}
		&\mbb{P}_\infty \left(T_{hm^{-1}(t)}\leq t \right)
		=\mbb{P}_\infty \left(T_{hm^{-1}(t)}\leq \mbb{E}_\infty \left[T_{m^{-1}(t)}\right] \right)\\
		&=\mbb{P}_\infty \left(\frac{T_{hm^{-1}(t)}}{\mbb{E}_\infty[T_{hm^{-1}(t)}] }\leq \frac{\mbb{E}_\infty[T_{m^{-1}(t)}]}{\mbb{E}_\infty[T_{hm^{-1}(t)}]} \right)
		\geq\mbb{P}_\infty \left(\frac{T_{hm^{-1}(t)}}{\mbb{E}_\infty[T_{hm^{-1}(t)}] }\leq \underbar{c}_h- \rho \right).
		\end{split}
		\end{equation*}
		Similarly,
		\begin{equation*}
		\begin{split}
		\mbb{P}_\infty \left(T_{m^{-1}(t)/h}\geq t \right)
		&=\mbb{P}_\infty \left(\frac{T_{m^{-1}(t)/h}}{\mbb{E}_\infty[T_{m^{-1}(t)/h}] }\geq \frac{\mbb{E}_\infty[ T_{m^{-1}(t)}]}{\mbb{E}_\infty[T_{m^{-1}(t)/h}]} \right)\\
		&\geq\mbb{P}_\infty \left(\frac{T_{m^{-1}(t)/h}}{\mbb{E}_\infty[T_{m^{-1}(t)/h}] }\geq \frac{1}{\underbar{c}_h}+\rho \right).
		\end{split}
		\end{equation*}
		Then  for $\underbar{c}_h- \rho>1, \, \frac{1}{\underbar{c}_h}+\rho<1$ and for all $\ep>0$ and all  $t$ small enough, by (\ref{converge}) we have
		$\mbb{P}_\infty (T_{hm^{-1}(t)}\leq t, \,\,\,\,\, T_{m^{-1}(t)/h}\geq t )\geq 1-\ep$.
		Observe that ${\underline X}_{T_b}=b $ and ${\underline X}_t$ decreases in $t$. Then
		$\mbb{P}_\infty ( {\underline X}_t\in [m^{-1}(t)/h, m^{-1}(t)h])\geq \mbb{P}_\infty (t\in [T_{m^{-1}(t)h}, T_{m^{-1}(t)/h}]  ) \geq 1-\ep$.
		Since $\ep$  is arbitrary,
		$\lim_{t\rightarrow 0+}\mbb{P}_\infty ( {\underline X}_t\in [m^{-1}(t)/h, m^{-1}(t)h])= 1$.  The desired limit in (a) follows by letting $h\rightarrow 1+$.
		
		The proof for (b) is similar and we leave it to interested readers.
	\end{proof}

	We now  identify conditions under which  $\lim_{t\rightarrow 0+}{\underline X}_t/{X_t}=1 $ \, $\mbb{P}_\infty$-a.s., i.e.
	for $t$ close to $0$, the sample path of  $X_t$ is ``almost'' a decreasing function with  relatively small upward fluctuations.
	
	\begin{prop}\label{equivinf}
		Assume $\gamma>0$ and \eqref{cdicondition}. Then $\mbb{P}_\infty$-a.s.,
		$\lim_{t\rightarrow 0+}\frac{{\underline X}_t}{X_t}=1.$
	\end{prop}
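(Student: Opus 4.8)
The plan is to establish the stronger assertion that for each $\epsilon\in(0,1)$ one has $\limsup_{t\to 0+}X_t/{\underline X}_t\le 1+\epsilon$ $\mbb P_\infty$-a.s.; since ${\underline X}_t\le X_t$ always, intersecting this over $\epsilon=1/n$, $n\ge1$, yields $\lim_{t\to0+}{\underline X}_t/X_t=1$ $\mbb P_\infty$-a.s. Fix $\epsilon\in(0,1)$, choose $r\in(1,1+\epsilon)$ and set $\delta:=1+\epsilon-r\in(0,\epsilon)$, $\ell_k:=r^k$, $h_k:=(r-1)\ell_k$, so that $0<\delta\ell_k<\epsilon\ell_k=h_k+\delta\ell_k$ and $(1+\epsilon)\ell_k-\ell_{k+1}=\delta\ell_k$. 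Recall from Theorem~\ref{main1} that under \eqref{cdicondition} and $\gamma>0$ the times $T_b$ are $\mbb P_\infty$-a.s. finite with $\mbb E_\infty[T_b]=m(b)$, that $m(b)\to0$ as $b\to\infty$, and that, $T_b$ being non-increasing in $b$, $T_{\ell_k}\downarrow 0$ $\mbb P_\infty$-a.s. (by monotone convergence and $m(\ell_k)\to0$). Because $X$ has no negative jumps, $X_{T_b}=b$. Introduce the overshoot $M_k:=\sup\{X_t:\ T_{\ell_{k+1}}\le t\le T_{\ell_k}\}$ for $k\ge1$ and note the deterministic inclusion: if $X_t>(1+\epsilon){\underline X}_t$ for some $t\le T_{\ell_{k_0}}$, then $t$ belongs to some $(T_{\ell_{k+1}},T_{\ell_k}]$ with $k\ge k_0$, so ${\underline X}_t\ge\ell_k$ (as $t\le T_{\ell_k}$) and $M_k\ge X_t>(1+\epsilon)\ell_k$. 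Hence $\{\exists\, t\le T_{\ell_{k_0}}:X_t>(1+\epsilon){\underline X}_t\}\subseteq\bigcup_{k\ge k_0}\{M_k>(1+\epsilon)\ell_k\}$, and it suffices to prove $\sum_{k\ge1}\mbb P_\infty(M_k>(1+\epsilon)\ell_k)<\infty$: Borel--Cantelli then produces a random $k_1$ with $X_t\le(1+\epsilon){\underline X}_t$ for all $t\le T_{\ell_{k_1}}>0$.

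To estimate $\mbb P_\infty(M_k>(1+\epsilon)\ell_k)$ I would apply the strong Markov property of $X$ at the stopping time $T_{\ell_{k+1}}$, where $X=\ell_{k+1}$. Since $T_{\ell_k}=T_{\ell_{k+1}}+T_{\ell_k}\circ\theta_{T_{\ell_{k+1}}}$ (to go below $\ell_k<\ell_{k+1}$ one must first go below $\ell_{k+1}$, and the post-$T_{\ell_{k+1}}$ process starts at $\ell_{k+1}$), $M_k$ has under $\mbb P_\infty$ the law of $\sup\{X_t:0\le t\le T_{\ell_k}\}$ under $\mbb P_{\ell_{k+1}}$. As $\eta^{-1}$ is a continuous increasing bijection from $[0,T_{\ell_k}]$ onto $[0,\tau_{\ell_k}^-]$, this supremum equals $\sup\{Z_s:0\le s\le\tau_{\ell_k}^-\}$ for the L\'evy process $Z$ started from $\ell_{k+1}$, and by spatial homogeneity
$$\mbb P_\infty\big(M_k>(1+\epsilon)\ell_k\big)=\mbb P_0\big(\tau_{\delta\ell_k}^+<\tau_{-h_k}^-\big)\le 1-\mbb P_0\big(\tau_{-h_k}^-\le\tau_{\delta\ell_k}^+\big)=1-\frac{W(\delta\ell_k)}{W(h_k+\delta\ell_k)},$$
the last equality being the two-sided exit identity of Lemma~\ref{l1} with $x=h_k$, $y=\delta\ell_k$. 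Since $\gamma>0$ we have $W(\infty)=1/\gamma<\infty$; writing $\Delta(z):=W(\infty)-W(z)$ (non-negative, non-increasing, $\Delta(z)\to0$) and using $h_k+\delta\ell_k=\epsilon\ell_k$, the right-hand side equals $\big(\Delta(\delta\ell_k)-\Delta(\epsilon\ell_k)\big)/W(\epsilon\ell_k)$, which for all large $k$ is at most $2\gamma\big(\Delta(\delta\ell_k)-\Delta(\epsilon\ell_k)\big)$ because $W(\epsilon\ell_k)\to1/\gamma$.

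It remains to sum. Let $\mu$ denote the Stieltjes measure of the non-decreasing function $W$, which is finite on $[\delta\ell_{k_0},\infty)$ with $\mu([a,\infty))=\Delta(a)$; then $\Delta(\delta\ell_k)-\Delta(\epsilon\ell_k)=\mu\big([\delta\ell_k,\epsilon\ell_k)\big)$. In logarithmic scale the intervals $[\delta r^k,\epsilon r^k)$, $k\ge k_0$, are translates by $\log r$ of a single interval of length $\log(\epsilon/\delta)$, so each point is covered by at most $N:=\lfloor\log(\epsilon/\delta)/\log r\rfloor+1$ of them; therefore $\sum_{k\ge k_0}\big(\Delta(\delta\ell_k)-\Delta(\epsilon\ell_k)\big)\le N\,\mu\big([\delta\ell_{k_0},\infty)\big)=N\,\Delta(\delta\ell_{k_0})<\infty$. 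This gives $\sum_k\mbb P_\infty(M_k>(1+\epsilon)\ell_k)<\infty$ and closes the argument after intersecting over $\epsilon=1/n$.

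The step I expect to be the main obstacle is precisely this summability. The naive bound $\mbb P_\infty(M_k>(1+\epsilon)\ell_k)\le 2\gamma\,\Delta(\delta\ell_k)$ need not be summable over geometric levels (for instance when the L\'evy measure has tail $\asymp 1/(z\log^2 z)$, so that $\Delta(z)\asymp1/\log z$); one must keep the \emph{difference} $\Delta(\delta\ell_k)-\Delta(\epsilon\ell_k)$ coming out of the two-sided exit identity, which is summable exactly because consecutive geometric ``boxes'' have bounded aspect ratio $\epsilon/\delta$. A secondary point requiring care is the identification of $\sup X$ over a block with $\sup Z$ over the corresponding pre-image under the time change, together with the exact hitting $X_{T_b}=b$ furnished by the absence of negative jumps, which is what makes the strong Markov reduction at $T_{\ell_{k+1}}$ clean.
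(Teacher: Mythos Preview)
Your proof is correct and follows essentially the same route as the paper's: geometric levels, reduction via the time-change and strong Markov property to the two-sided exit problem for $Z$, the identity $\mbb P_0(\tau^-_{-x}\le\tau^+_y)=W(y)/W(x+y)$, and then Borel--Cantelli once the probabilities are shown summable using $W(\infty)<\infty$. The only cosmetic difference is in the summability step: you keep the differences $\Delta(\delta\ell_k)-\Delta(\epsilon\ell_k)$ and control them by a bounded-overlap count in log scale, whereas the paper bounds $W(a_{n+1}-a_{n-1})-W(a_{n+1}-a_n)$ by a shifted telescoping sum---both exploit exactly the fact that $W$ is increasing and bounded.
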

	\begin{proof}
		Recall the first exit times  $\tau^+_b$ and $\tau^-_b$ for the L\'evy process $Z$, see Section \ref{scalefunction}. For any $b>0$, set $T^+_b:=\inf\{t>0: X_t>b\}$ the first exit time of $X$ above $b$.
		Since the process $X$ comes down from infinity, from Lemma \ref{equivcdi}, there exists $b>0$ such that $\mathbb{E}_\infty[T_{b}]<\infty $.
		Observe that, since the process $X$ is a time-change of process $Z$,  for $b<b_1<x<b_2$ and $Y_0=X_0=x $, we have
		$\{T_{b_1}<T^+_{b_2} \}= \{\tau^-_{b_1}<\tau^+_{b_2} \}$
		and
		$\{T_{b_1}>T^+_{b_2} \}= \{\tau^-_{b_1}>\tau^+_{b_2} \}$.
		Then, the fluctuations of $X$ can be studied via  the fluctuations of $Z$.
		\\
		
		Given $a>1$, for any $\delta>0$ and sequence $(a_n)_{n\geq 1}:=(a^{1+n\delta})_{n\geq 1}$,
		choose $m$ large enough so that $a_m>b$ and $W(a_{n+1}-a_{n-1})>W(\infty)/2 $ for all $n\geq m $.
		Further choose $k$ large enough so that $a^{k\delta}(a^{\delta}-1)\geq 1$. Then
		for all $n\geq 1$,
		$$a_{n+k+1}-a_{n+k}=a^{1+n\delta}a^{k\delta}(a^{\delta}-1)\geq a_{n}-a_{n-2},$$
		which implies
		$W(a_{n+k+1}-a_{n+k})\geq W(a_n-a_{n-2})$.
		It follows that
		\begin{equation}\label{BC_lemma}
		\begin{split}
		&\sum_{n=m+1}^\infty \mbb{P}_{a_n}(T^+_{a_{n+1}}<T_{a_{n-1}})=\sum_{n=m+1}^\infty  \left(1-\frac{W(a_{n+1}-a_n)}{W(a_{n+1}-a_{n-1})}\right)\\
		&\leq\frac{2}{W(\infty)}\sum_{n=m+1}^\infty \big(W(a_{n+1}-a_{n-1})- W(a_{n+1}-a_{n})\big)  \\
		&\leq\frac{2}{W(\infty)}\lim_{n'\rightarrow\infty}\left\{\sum_{n=m+1}^{n'-k-1} \big(W(a_{n+1}-a_{n-1})- W(a_{n+k+2}-a_{n+k+1})\big) \right. \\
		&\qquad\qquad\qquad { \left.+\sum_{n=n'-k}^{n'} W(a_{n+1}-a_{n-1})  \right\}   }\\
		&\leq \frac{2}{W(\infty)}\times (k+1)W(\infty)<\infty,
		\end{split}
		\end{equation}
		where we used Lemma \ref{l1} in the first equality.
		\noindent
		Since the process $X$ comes down from infinity, by the strong Markov property and the lack of negative jumps for $X$,
		\[\mbb{P}_\infty (T^+_{a_{n+1}}\circ\theta_{T_{a_{n}}}< T_{a_{n-1}}\circ\theta_{T_{a_{n}}})=\mbb{P}_{a_n}(T^+_{a_{n+1}}<T_{a_{n-1}}),\]
		where $\theta$ is the shift operator, see e.g. \cite[p.146]{Kallenberg}. Applying the Borel-Cantelli lemma, by (\ref{BC_lemma}) we have $\mbb{P}_\infty$-a.s. for all $n$ large enough,
		\begin{equation}\label{fluc}
		T^+_{a_{n+1}}\circ\theta_{T_{a_{n}}} \ge    T_{a_{n-1}}\circ\theta_{T_{a_{n}}},
		\end{equation}
		It follows from (\ref{fluc}) that  $\mbb{P}_\infty$-a.s., for any $t$ small enough,
		$t\in [T_{a_{n}}, T_{a_{n-1}})$ implies that $X_t\in (a_{n-1}, a_{n+1})$ and consequently,
		$ 1\geq\frac{{\underline X}_t}{X_t}\geq \frac{a_{n-1}}{a_{n+1}}=a^{-2\delta}$.
		Since $T_{a_{n}}\rightarrow 0$ as $n\to\infty$ under $\mbb{P}_\infty$ and  $\delta>0$ is arbitrary,
		the limit then follows by letting $\delta\rightarrow 0+$.
	\end{proof}
	The following result finds a condition on $\Psi$ under which  $\lim_{t\rightarrow 0+}{{\underline X}_t}/{X_t}=1$ fails.
	
	\begin{prop}\label{bigexcursionsabove}
		Assume $\Psi(\lambda)\sim c \lambda^{\alpha}$ as $\lambda$ goes to $0$, for some $\alpha\in (1,2]$ and $c>0$. Suppose that $X$ comes down from infinity, then $\mbb{P}_\infty$-a.s.
		$\limsup_{t\rightarrow 0+}\frac{X_t}{{\underline X}_t}=\infty.$
	\end{prop}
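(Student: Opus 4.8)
The plan is to exploit the Lamperti-type correspondence between excursions of the nonlinear CSBP $X$ above its running infimum $\underline{X}$ and excursions of the underlying L\'evy process $Z$ above its running infimum, and to show that the spectrally positive stable-like behaviour of $Z$ forces arbitrarily large relative overshoots. Since $X$ comes down from infinity, by Lemma~\ref{equivcdi} there is $b>0$ with $\mathbb{E}_\infty[T_b]<\infty$, so $\mathbb{P}_\infty$-a.s. $X$ hits every level of a sequence $a_n\downarrow$ decreasing to, say, $0$ or to a fixed reference point; choosing a geometric grid $a_n := a_0 c^{-n}$ with $c>1$, the stopping times $T_{a_n}$ decrease to $0$ a.s. under $\mathbb{P}_\infty$. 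Using the time-change identity (as in the proof of Proposition~\ref{equivinf}) one translates the event $\{X$ makes an excursion above $\underline{X}$ reaching height $\geq M a_n$ while dropping from $a_n$ to $a_{n+1}\}$ into the L\'evy-process event $\{\tau^+_{Ma_n} < \tau^-_{a_{n+1}}\}$ started from $a_n$, whose probability by Lemma~\ref{l1} equals $1 - W(M a_n - a_n)/W(M a_n - a_{n+1})$, up to translation.

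The key computation is a lower bound for these probabilities. Since $\Psi(\lambda)\sim c\lambda^\alpha$ as $\lambda\to 0$, the bounds \eqref{boundW} and the regular variation of $1/(x\Psi(1/x))$ give that $W$ is regularly varying at $\infty$ with index $\alpha-1$; concretely $W(x) \sim x^{\alpha-1}/(c\,\Gamma(\alpha))$ as $x\to\infty$ (consistent with the exact stable case). Hence for large $n$,
\[
\mathbb{P}_{a_n}\bigl(\tau^+_{Ma_n}<\tau^-_{a_{n+1}}\bigr)
= 1 - \frac{W\bigl((M-1)a_n\bigr)}{W\bigl(M a_n - a_{n+1}\bigr)}
\longrightarrow 1 - \Bigl(\frac{M-1}{M}\Bigr)^{\alpha-1} > 0
\]
as $n\to\infty$ (using $a_{n+1}=a_n/c$ is negligible relative to $Ma_n$ for $M$ large, or more precisely the limit is $1-((M-1)/(M-c^{-1}))^{\alpha-1}$, still bounded below uniformly in $n$ by a positive constant $p_M$). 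Because these excursions occur in disjoint time intervals $[T_{a_n}, T_{a_{n+1}})$ and the relevant events depend on disjoint portions of the path, the strong Markov property of $X$ at the times $T_{a_n}$ makes them independent; since $\sum_n p_M = \infty$, the second Borel--Cantelli lemma yields that $\mathbb{P}_\infty$-a.s. infinitely many $n$ satisfy: during $[T_{a_n},T_{a_{n+1}})$ the process $X$ exceeds $M a_n$ while $\underline{X}_t \leq a_n$ there. For such $n$ and the corresponding time $t$, $X_t/\underline{X}_t \geq M$. Letting $M\to\infty$ along a countable sequence $M_j\to\infty$ (intersecting the full-measure events) gives $\limsup_{t\to 0+} X_t/\underline{X}_t = \infty$ a.s.

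The main obstacle is the independence/Borel--Cantelli bookkeeping: one must verify that the excursion events can be made genuinely independent across the grid. The clean way is to work with the post-$T_{a_n}$ process, which under $\mathbb{P}_\infty$ is $X$ started at $a_n$ by the strong Markov property, and to define the event purely in terms of the excursion straddling the first passage from $a_n$: namely $\{T^+_{Ma_n}\circ\theta_{T_{a_n}} < T_{a_{n+1}}\circ\theta_{T_{a_n}}\}$, which is measurable with respect to the path on $[T_{a_n}, T_{a_{n+1}})$ and hence these events for $n$ in, say, the even integers are mutually independent (or one simply applies the Borel--Cantelli lemma for the independent increments version directly to the sequence $(T_{a_{n+1}}-T_{a_n})$-measurable events). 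A secondary technical point is confirming that $T_{a_n}\to 0$ a.s., which follows since $m(a_n)=\mathbb{E}_\infty[T_{a_n}]\to 0$ as $a_n\to\infty$ (by Theorem~\ref{main1} and monotonicity of $m$) — wait, here the grid goes to infinity rather than to $0$; choosing $a_n = a_0 c^{\,n}\to\infty$, one has $T_{a_n}$ decreasing with $\mathbb{E}_\infty[T_{a_n}]=m(a_n)\to 0$, so $T_{a_n}\to 0$ in $L^1$ and a.s. by monotonicity, and the excursion at level $a_n$ occurs during an interval shrinking to $0$. With this setup the argument closes.
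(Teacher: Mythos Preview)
Your approach is essentially the paper's: a geometric grid $a^n\to\infty$, the two-sided exit formula from Lemma~\ref{l1} together with the regular variation $W(x)\sim c'x^{\alpha-1}$ (which the paper obtains via the Tauberian theorem) to get a uniform positive lower bound on $\mathbb{P}_{a^n}(T^+_{a^{n+1}}<T_{a^{n-1}})$, then Borel--Cantelli and letting the ratio parameter ($a$, your $M$) tend to infinity. Two bookkeeping remarks: (i) once you correct the grid to $a_n\uparrow\infty$, the lower level in the exit event should be $a_{n-1}$, not $a_{n+1}$; (ii) your independence concern is unnecessary---since the event $\{T^+_{a^{n+1}}\circ\theta_{T_{a^n}}<T_{a^{n-1}}\circ\theta_{T_{a^n}}\}$ is determined by the path on $[T_{a^n},T_{a^{n-1}}]$, the strong Markov property at the successive $T_{a^n}$ already makes these events mutually independent, so no even/odd splitting is needed.
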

	
	\begin{proof}
		\noindent From Lemma \ref{l1} and the Tauberian theorem, one sees that $\Psi(\lambda)\sim c\lambda^{\alpha}$ as $\lambda\rightarrow 0+$ for some constant $c>0$ is equivalent to $W(x)\sim c' x^{\alpha-1}$ as $x\rightarrow\infty$.
		Using Lemma \ref{l1} and the same time-change techniques as in the proof of Proposition~\ref{equivinf}, we see that for $a>1$,
		\begin{equation}\label{p6.4f}
		\begin{split}
		\lim_{n\rightarrow\infty}\mbb{P}_{a^n} (T^+_{a^{n+1}}<T_{a^{n-1}})
		&=\lim_{n\rightarrow\infty}\mbb{P}_{a^n}\left(\tau^+_{a^{n+1}}<\tau^-_{a^{n-1}}\right)
		=1-\lim_{n\rightarrow\infty}\frac{W(a^{n+1}-a^n)}{W(a^{n+1}-a^{n-1})}\\
		&= 1-\lim_{n\rightarrow\infty}\left(\frac{a^{n+1}-a^n}{a^{n+1}-a^{n-1}}\right)^{\alpha-1}
		=1-\left(\frac{a}{a+1}\right)^{\alpha-1}>0.
		\end{split}
		\end{equation}
		Since the process comes down from infinity, for any $a>1$ and $n\ge1$,
		$\mbb{P}_\infty(T_{a^{n-1}}<\infty)=1.$
		By the strong Markov property and (\ref{p6.4f})
		\[\sum_{n=m}^\infty \mbb{P}_{\infty}\left(T^+_{a^{n+1}}\circ\theta_{T_{a^{n}}}<T_{a^{n-1}}\circ\theta_{T_{a^{n}}}\right)=\sum_{n=m}^\infty \mbb{P}_{a^n}\left(T^+_{a^{n+1}}<T_{a^{n-1}}\right)=\infty.\]
		Applying the  Borel-Cantelli lemma we have $\mbb{P}_\infty$-a.s. for infinitely many  $n$,
		$ T^+_{a^{n+1}}\circ\theta_{T_{a^{n}}}<T_{a^{n-1}}\circ\theta_{T_{a^{n}}}$,
		and $\limsup_{t\rightarrow 0+}\frac{X_t}{{\underline X}_t}\geq a$ $\mbb{P}_\infty$-a.s for all $a>1$. Thus, $\limsup_{t\rightarrow 0+}\frac{X_t}{{\underline X}_t}=\infty$ $\mbb{P}_\infty$-a.s.
	\end{proof}
	\subsection{Proofs of the main results}
	By  Lemma \ref{liminf_a} 
	we can now identify the speeds of coming down from infinity for different rate functions. We shall need the following result on functions satisfying $\mathbb{H}_1$ and $\mathbb{H}_2$. We refer the reader to Buldygin et al. \cite[Theorem 3.1 and Theorem 6.2]{MR1952816}, see also Djur\v{c}i\'{c} and Torga\v{s}ev \cite{MR1815787}.
	\begin{lemma}\label{inverse-equiv} Assume that $\varphi$ satisfies $\mathbb{H}_1$ and $\mathbb{H}_2$. For any nonnegative functions $u,v$ on $\mbb{R}_+$ such that $u(x)\underset{x\rightarrow \infty}{\sim} v(x)$ and $u(x)\underset{x\rightarrow \infty}{\longrightarrow} 0$, we have $\varphi^{-1}(u(x))\underset{x\rightarrow \infty}{\sim} \varphi^{-1}(v(x))$. If $\varphi(x)\underset{x\rightarrow \infty}{\sim} g(x)$ for some nonnegative decreasing function $g$ on $\mbb{R}_+$, then $\varphi^{-1}(t)\underset{ t\rightarrow 0+}{\sim} g^{-1}(t)$. 
	\end{lemma}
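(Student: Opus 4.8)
The plan is to reduce both assertions to a single Potter-type local comparison estimate for $\varphi^{-1}$ near $0$, namely that
\begin{equation*}
\lim_{\epsilon\to0+}\ \limsup_{s\to0+}\frac{\varphi^{-1}\big((1-\epsilon)s\big)}{\varphi^{-1}(s)}=1=\lim_{\epsilon\to0+}\ \liminf_{s\to0+}\frac{\varphi^{-1}\big((1+\epsilon)s\big)}{\varphi^{-1}(s)},
\end{equation*}
and then to conclude by elementary sandwiching. For the first assertion, fix $\epsilon\in(0,1)$; since $u(x)\sim v(x)$ one has $(1-\epsilon)v(x)\le u(x)\le(1+\epsilon)v(x)$ for all large $x$, and $\varphi^{-1}$ being strictly decreasing,
\begin{equation*}
\frac{\varphi^{-1}\big((1+\epsilon)v(x)\big)}{\varphi^{-1}(v(x))}\le\frac{\varphi^{-1}(u(x))}{\varphi^{-1}(v(x))}\le\frac{\varphi^{-1}\big((1-\epsilon)v(x)\big)}{\varphi^{-1}(v(x))}.
\end{equation*}
As $v(x)\to0$, taking $\limsup$ and $\liminf$ in $x$, then letting $\epsilon\to0+$ and invoking the local estimate, yields $\varphi^{-1}(u(x))\sim\varphi^{-1}(v(x))$.

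To establish the local estimate I would substitute $s=\varphi(y)$, so $y=\varphi^{-1}(s)\to\infty$ as $s\to0+$. For the upper ratio, fix $\delta>0$; the inequality $\varphi^{-1}\big((1-\epsilon)\varphi(y)\big)\le(1+\delta)y$ is equivalent, after applying the decreasing map $\varphi$, to $\varphi\big((1+\delta)y\big)\le(1-\epsilon)\varphi(y)$, i.e. to $\varphi(y)/\varphi\big((1+\delta)y\big)\ge 1/(1-\epsilon)$. By $\mathbb{H}_2$ with $h=1+\delta$, the number $L(\delta):=\liminf_{y\to\infty}\varphi(y)/\varphi\big((1+\delta)y\big)$ satisfies $L(\delta)>1$, so as soon as $\epsilon<1-1/L(\delta)$ the displayed inequality holds for all large $y$; hence $\limsup_{s\to0+}\varphi^{-1}\big((1-\epsilon)s\big)/\varphi^{-1}(s)\le1+\delta$, and since this ratio always exceeds $1$ and $\delta$ is arbitrary, the first half of the estimate follows. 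The lower ratio is symmetric: $\varphi^{-1}\big((1+\epsilon)\varphi(y)\big)\ge(1-\delta)y$ is equivalent to $\varphi\big((1-\delta)y\big)/\varphi(y)\ge1+\epsilon$, and after the substitution $z=(1-\delta)y$ this reads $\varphi(z)/\varphi(hz)\ge1+\epsilon$ with $h=1/(1-\delta)>1$, which holds for large $z$ once $\epsilon$ is small, by $\mathbb{H}_2$. Here $\mathbb{H}_1$ is the complementary requirement that $\varphi$ not decrease too abruptly; it is assumed throughout, but the above only uses $\mathbb{H}_2$.

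For the second assertion I would first note that $\varphi(x)\sim g(x)$ forces $\liminf_x g(hx)/g(x)=\liminf_x\varphi(hx)/\varphi(x)$ and similarly for the reciprocal ratios, so $g$ inherits $\mathbb{H}_1$ and $\mathbb{H}_2$. Since $\varphi^{-1}(t)\to\infty$ as $t\to0+$ and $g(y)\sim\varphi(y)$ at infinity, we get $g\big(\varphi^{-1}(t)\big)\sim\varphi\big(\varphi^{-1}(t)\big)=t$; applying the first assertion to the function $g$, with $u(t):=g\big(\varphi^{-1}(t)\big)$ and $v(t):=t$, then gives $g^{-1}\big(g(\varphi^{-1}(t))\big)\sim g^{-1}(t)$, that is $\varphi^{-1}(t)\sim g^{-1}(t)$. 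The main point requiring care — and the only real obstacle beyond iterated-limit bookkeeping — is that $g$ is merely assumed decreasing, so $g^{-1}$ must be read as a generalized inverse and $g^{-1}\circ g$ interpreted accordingly; this is resolved by sandwiching $\varphi^{-1}(t)$ directly between $g^{-1}\big(t/(1+\epsilon)\big)$ and $g^{-1}\big(t/(1-\epsilon)\big)$ (obtained by evaluating $(1-\epsilon)g\le\varphi\le(1+\epsilon)g$ at $\varphi^{-1}(t)$) and then applying the local estimate of the previous paragraph to $g$ in place of $\varphi$. Alternatively one may simply invoke Buldygin et al. \cite[Theorems 3.1 and 6.2]{MR1952816}.
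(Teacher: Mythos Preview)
Your argument is correct. The paper itself does not prove this lemma at all: it simply refers the reader to Buldygin et al.\ \cite[Theorems~3.1 and~6.2]{MR1952816} and Djur\v{c}i\'{c}--Torga\v{s}ev \cite{MR1815787}, which is precisely the alternative you mention at the end. So you have supplied a self-contained proof where the paper outsources to the literature.

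Your route is the standard one: translate $\mathbb{H}_2$ into a Potter-type two-sided bound on $\varphi^{-1}\big((1\pm\epsilon)s\big)/\varphi^{-1}(s)$ near $s=0$ via the substitution $s=\varphi(y)$, and then sandwich. Your observation that the argument only calls on $\mathbb{H}_2$ is accurate for the statement as written; $\mathbb{H}_1$ plays no role in your estimates, and indeed the asymptotic-inverse preservation results in the cited works are formulated for the class of functions satisfying essentially $\mathbb{H}_2$ alone (the PRV/POV classes). The paper's hypothesis $\mathbb{H}_1$ is there because the lemma is applied downstream in conjunction with Lemma~\ref{equivmphi}, not because the present statement needs it. For the second assertion, your handling of the generalized-inverse issue by direct sandwiching of $\varphi^{-1}(t)$ between $g^{-1}\big(t/(1+\epsilon)\big)$ and $g^{-1}\big(t/(1-\epsilon)\big)$, followed by the local estimate applied to $g$, is the clean way through and avoids the $g^{-1}\circ g$ ambiguity you flagged.
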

	\begin{proof}[Proof of Theorem \ref{regularCDI}]
		Under $\mathbb{H}_1$, Lemma \ref{equivmphi} and Proposition \ref{cvinproba} entail $m(b)\underset{b\rightarrow \infty}{\sim}\varphi(b)$ and the convergence in probability towards $1$ of $\left(T_b/m(b),b\geq 0\right)$.  Thus, since $\mathbb{H}_2$ entails $\underset{x\rightarrow \infty}{\liminf}\frac{m(x)}{m(hx)}>1$ for any $h>1$, by Lemma \ref{liminf_a}, in $\mathbb{P}_\infty$-probability
		$\frac{\underbar{X}_t}{m^{-1}(t)}\underset{t\rightarrow 0+}{\longrightarrow} 1$.
		By Proposition \ref{equivinf}, $\underbar{X}_t\underset{t\rightarrow 0+}{\sim} X_t$ almost-surely and Theorem \ref{regularCDI} follows since by Lemma \ref{inverse-equiv}, $m^{-1}(t)\underset{ t\rightarrow 0+}{\sim} \varphi^{-1}(t)$.
	\end{proof}
	\begin{proof}[Proof of Theorem \ref{regularvarR}]  By the assumption, $R$ is regularly varying at $\infty$ with index $\theta>1$ and
		Corollary \ref{stablelikepsiandcramer} ensures that  \eqref{cdicondition} is satisfied and $\mathbb{P}_\infty$-almost-surely, $\frac{T_b}{m(b)}\underset{b\rightarrow \infty}{\longrightarrow} 1$. By Lemma \ref{equivmphi}, $m(b)\underset{b\rightarrow \infty}{\sim} \varphi(b)$ and Lemma \ref{inverse-equiv} entails
		$\frac{m^{-1}(T_b)}{b}\underset{b\rightarrow \infty}{\longrightarrow} 1$ $\mathbb{P}_\infty$-a.s. Since for any $h>1$ and for any $t\in (T_{hb},T_{b})$,  we have $b\leq \underbar{X}_t\leq hb$. Then
		$ \frac{b}{m^{-1}(T_{hb})}\leq \frac{\underbar{X}_t}{m^{-1}(t)}\leq \frac{bh}{m^{-1}(T_b)}$.
		Therefore,  $\mathbb{P}_\infty$ almost-surely, $\underset{t\rightarrow 0+}{\liminf}\frac{\underbar{X}_t}{m^{-1}(t)}\geq \frac{1}{h}$ and $\underset{t\rightarrow 0+}{\limsup}\frac{\underbar{X}_t}{m^{-1}(t)}\leq h$. Since $h$ can be arbitrarily close to $1$, we have
		$\underset{t\rightarrow 0+}{\limsup}\frac{\underbar{X}_t}{m^{-1}(t)}=\underset{t\rightarrow 0+}{\liminf}\frac{\underbar{X}_t}{m^{-1}(t)}=1\quad \mathbb{P}_\infty\text{-a.s.}$
		By Proposition \ref{equivinf}, $\underbar{X}_t\underset{t\rightarrow 0+}{\sim} X_t$\,\, $\mathbb{P}_\infty\text{-a.s.} $ and
		since by Lemma \ref{inverse-equiv}, $m^{-1}(t)\underset{t\rightarrow 0+}{\sim} \varphi^{-1}(t)$, we have that
		$\frac{X_t}{\varphi^{-1}(t)}\underset{t\rightarrow 0+}{\longrightarrow} 1$ $\mathbb{P}_\infty$-a.s. Moreover, \cite[Theorem 1.5.12]{MR1015093} entails that $\varphi^{-1}$ is regularly varying at $0$ with index $-1/(\theta -1)$.
	\end{proof}
	\begin{proof}[Proof of Theorem \ref{stablecriticalrv}]
		Assume $\Psi(\lambda)\sim c\lambda^{\alpha}$ with $\alpha\in (1,2]$ and $c>0$ and $R$ regularly varying at $\infty$ with index $\theta>\alpha$.  We have seen in Corollary \ref{cvlawcriticalstablerv} that \eqref{cdicondition} is satisfied. The first statement $\limsup_{t\rightarrow 0+}\frac{X_t}{{\underline X}_t}=\infty$ is given by Proposition \ref{bigexcursionsabove}.
		For any $t>0$ and any $y\geq 0$,
		\begin{align*}
		\mathbb{P}_\infty\left(\frac{\underbar{X}_t}{m^{-1}(t)}\leq y\right)&=\mathbb{P}_\infty(T_{ym^{-1}(t)}\leq t)=\mathbb{P}_\infty\left(\frac{T_{ym^{-1}(t)}}{m(ym^{-1}(t))}\leq \frac{t}{m(ym^{-1}(t))}\right).
		\end{align*}
		By (\ref{equivmstable}), $m$ is a regularly varying function with index $\alpha-\theta<0$ at $\infty$ and
		$\frac{t}{m(ym^{-1}(t))}=\frac{m(m^{-1}(t))}{m(ym^{-1}(t))}\underset{t\rightarrow 0+}{\longrightarrow} y^{\theta-\alpha}$. By Corollary \ref{cvlawcriticalstablerv}, for any $y\geq 0$,
		$\mathbb{P}_\infty\left(\frac{\underbar{X}_t}{m^{-1}(t)}\leq y\right)\underset{t\rightarrow 0+}{\longrightarrow }\mathbb{P}\left(S_{\alpha,\theta}^{\frac{1}{\theta-\alpha}}\leq y\right)$.  By \cite[Theorem 1.5.12]{MR1015093}, $m^{-1}$ is regularly varying at $0$ with index $\frac{1}{\alpha-\theta}$.
	\end{proof}
	\begin{proof}[Proof of Proposition \ref{fastCDI}]
		Assume $R(x)=x^{\theta_1}\ell(x)e^{\theta_2 x}$ for $\theta_1\in \mathbb{R}$ and $\theta_2>0$, where $\ell$ is a slowly varying function at $\infty$. We have seen in  Corollary \ref{exponen_conve} that \eqref{cdicondition} is satisfied and the inequality \eqref{W1encadrement} yields that for some $\epsilon>0$ and large enough $x$,
		$\frac{m(x)}{m(xh)}\geq \frac{(1-\epsilon)^2}{1+\epsilon}\frac{R(hx)}{R(x)}$.
		Since for $h>1$, $\liminf_{x\rightarrow\infty}\frac{R(hx)}{R(x)}=\infty$,
		we have $\liminf_{x\rightarrow\infty}\frac{m(x)}{m(hx)}=\infty$
		and by Corollary \ref{exponen_conve} and  Lemma \ref{liminf_a}, in $\mathbb{P}_\infty$-probability
		$\frac{\underbar{X}_t}{m^{-1}(t)}\underset{t\rightarrow 0+}{\longrightarrow} 1$
		and Proposition \ref{equivinf} entails $\underbar{X}_t \underset{t\rightarrow 0+}{\sim} X_t$. It remains to show that, when $\gamma>0$, $m^{-1}(t)\underset{t\rightarrow 0+}{\sim} \varphi^{-1}(t).$   By Corollary \ref{exponen_conve},  $m(x)\underset{x\rightarrow \infty}{\sim} \frac{1}{\Psi(\theta_2) R(x)}$ and  $\varphi(x)\underset{x\rightarrow \infty}{\sim} \frac{1}{\theta_2 \gamma R(x)}$ (for the latter equivalence, consider $\Psi(\lambda)=\gamma \lambda$ and note that $W_1=\varphi$). Therefore, $\frac{m(x)}{\varphi(x)}\underset{x\rightarrow \infty}{\longrightarrow} \frac{\gamma \theta_2}{\Psi(\theta_2)}.$  For any $\lambda>\frac{\Psi(\theta_2)}{\gamma \theta_2}$ and $\mu>\frac{\gamma \theta_2}{\Psi(\theta_2)}$, for large enough $x$, $\mu \varphi(x)\geq m(x)\geq \frac{1}{\lambda}\varphi(x)$
		and  for $t$ small enough
		$\varphi^{-1}(t/\mu)\leq m^{-1}(t)\leq \varphi^{-1}(\lambda t)$.
		By Elez and Djur\v{c}i\'{c} \cite[Theorem 1.1-(d)]{MR3018036}, since $\varphi(x)/\varphi(cx)\underset{x\rightarrow \infty}{\longrightarrow} \infty$ for any $c>1$,   $\varphi^{-1}$ is slowly varying at $0$, and  $\frac{m^{-1}(t)}{\varphi^{-1}(t)}\underset{t\rightarrow 0+}{\longrightarrow} 1.$
	\end{proof}
	\textbf{Acknowledgements:} C.F's research is partially  supported by the French National Research Agency (ANR): ANR GRAAL (ANR-14-CE25-0014) and by LABEX MME-DII (ANR11-LBX-0023-01). X.Z.'s research is supported by Natural Sciences and Engineering Research Council of Canada (RGPIN-2016-06704) and National Natural Science Foundation of China (No.\ 11731012 and No.\ 11771018). P.L.'s research is supported by Natural Sciences and Engineering Research Council of Canada (RGPIN-2012-07750 and RGPIN-2016-06704) and National Natural Science Foundation of China (No.\ 11901570 and No.\ 11771046). The authors are grateful to Donald Dawson for his generous support and encouragement. They also thank an anonymous referee for very detailed comments.

	\bigskip
	
	\bigskip
	
	\noindent\textbf{\Large Appendix A: Proof of Proposition 2.1} 
	
	\bigskip
	
	Recall the statement of Proposition 2.1. Since $R$ is continuous and positive,  for any $x>0$ and any $t<\tau_0^{-}$, $\mathbb{P}_x$-almost-surely
	$R(Z_s)\geq \inf_{u\leq t} R(Z_u)=\inf_{z\in [\underset{s\leq t}\inf Z_s, \underset{s\leq t}\sup Z_s]}R(z)>0$ for any $s\leq t $. This provides that $\eta(t)<\infty$ a.s. and entails that the process $X$ is well-defined. Time-changing a strong Markov process with c\`adl\`ag paths by the inverse of an additive functional gives another c\`adl\`ag strong Markov process, see e.g. Volkonskii \cite{MR0100919}. In particular, property (ii) is satisfied and it only remains to verify (i) on $[0,\infty)$. The argument follows closely that of Lamperti \cite{MR0230370} and is based on a continuity result due to Whitt \cite[Theorem 3.1]{MR561155} and Caballero  et al. \cite[Section 3]{MR2592395}. Denote by $\mathcal{D}$ the space of c\`adl\`ag paths from $[0,\infty)$ to $[0,\infty]$, with $0$ and $\infty$ as traps. For any $x,y\in [0,\infty]$, set  $\rho(x,y)=|e^{-x}-e^{-y}|$ and $\rho_\infty^{\mathcal{D}}(f,g)=\sup_{s\in  [0,\infty)}\rho(f(s),g(s))$.
	Let $\Lambda_\infty$ be the set of increasing homeomorphisms of $[0,\infty)$ into itself and $d_\infty$ the metric on $\mathcal{D}$
	\[d_\infty(f,g):=\underset{\lambda \in \Lambda_\infty}{\inf} \! \! \! \left(\rho_\infty^{\mathcal{D}}(f,g\circ \lambda)\vee ||\lambda-\mathrm{Id}||\right),\]
	where $||\cdot||$ denotes the uniform norm on $[0, \infty)$. Consider the canonical L\'evy process $Z^0$, with Laplace exponent $\Psi$, started from $0$ and let $Z^{x}:=x+Z^{0}$ be the L\'evy process  started from $x\in [0,\infty)$. Set $\tau_0^{x}:=\inf\{t\geq 0: Z_t^x\leq 0\}$ and $(X^x_{t},t\geq 0)$ the nonlinear CSBP started from $x$ defined by $X_t^x=Z^{x}_{\eta^{-1}(t)\wedge \tau^x_0}$ for all $t\geq 0$. Same arguments as in Caballero et al. \cite[Proposition 5]{MR2592395} readily apply and entail that the time-change transformation in $\mathcal{D}$ mapping sample paths of $Z^x_{\cdot\wedge \tau^{x}_0}$ to sample paths of $X^x$ is continuous with respect to the distance $d_\infty$. 
	One can  check, see the proof of Theorem 1.2 in Li \cite{2016arXiv160909593L}, that almost-surely
	\begin{equation}\label{continuityfortheLP}
	\underset{x\rightarrow y}{\lim}\ d_\infty(Z^{x}_{\cdot\wedge \tau_0^{x}},Z^{y}_{\cdot\wedge\tau_0^{y}})=\underset{y\rightarrow x}{\lim}\ d_\infty(Z^{y}_{\cdot\wedge \tau_0^{y}},Z^{x}_{\cdot\wedge\tau_0^{x}})=0.
	\end{equation}
	Therefore, by the continuity, \eqref{continuityfortheLP} entails
	\begin{equation}\label{continuityfortheNCSBP}
	\underset{x\rightarrow y}{\lim}\ d_\infty(X^{x}_{\cdot},X^{y}_{\cdot})=\underset{y\rightarrow x}{\lim}\ d_\infty(X^{y}_{\cdot},X^{x}_{\cdot})=0 \text{ a.s.}
	\end{equation}
	According to Billingsley's book \cite[Lemma 1, Section 16] {MR1700749}, convergence with $d_\infty$ implies convergence in the usual Skorokhod distance. Recall also that for any $t\in (0,\infty)$, the projection $\pi_t: \mathrm{x}\in \mathcal{D}\mapsto \mathrm{x}(t)$ is continuous on the set $\{\mathrm{x}\in \mathcal{D}; \mathrm{x}(t)=\mathrm{x}(t-)\}$ for the Skorokhod distance and thus for $d_\infty$. Moreover, for any $t$ and any $y$, $\mathbb{P}(X^{y}_{t-}=X^{y}_{t})=1$ and \eqref{continuityfortheNCSBP} ensures that for any continuous bounded function $f$, $f(X^{x}_t)\underset{x\rightarrow y}{\longrightarrow} f(X^{y}_t)$ almost-surely. Lebesgue's theorem finally entails $P_tf(x)\underset{x\rightarrow y}{\longrightarrow} P_tf(y).$
	
\bigskip

\bigskip

\noindent\textbf{\Large Appendix B: Proof of Proposition 2.7} 

\bigskip
	Recall  Theorem 2.7. Consider a spectrally positive L\'evy process $Z$ and a locally bounded nonnegative function $\omega$ on $(0, \infty)$.  Denote by $\hat{\mathbb{P}}_x$ the law of $\hat{Z}:=-Z$ started at $x$. Set $\hat{\tau}^+_y:=\inf\{t\ge0: \hat{Z}_t> y\}$ and $\hat{\tau}^-_y:=\inf\{t\ge0: \hat{Z}_t< y\}$.
	Let $W^{(\omega)}_n(x)$ satisfy
	$W^{(\omega)}_0(x)=1$, $W^{(\omega)}_{n+1}(x)=\int_{x}^{\infty}W(z-x)\omega(z)W^{(\omega)}_n(z)\ddr z$ for $ x\geq 0, n\ge0$.
	Given $b\geq 0$, we assume
	$\sum^\infty_{n=0}  W^{(\omega)}_{n}(b)<\infty.$
	We show in the sequel that for all $x\geq b$,
	\[ \mbb{E}_x\left[\exp\left(-\int_0^{\tau_b^-} \omega(Z_s)ds\right); \tau^-_b<\infty  \right]=\frac{\sum^\infty_{n=0} W^{(\omega)}_{n}(x)}{\sum^\infty_{n=0} W^{(\omega)}_{n}(b)}.\]
	Recall that $\tau^{-}_b$ under $\mathbb{P}_x$ has the same law as $\hat{\tau}^+_{-b}$ under $\hat{\mathbb{P}}_{-x}$. For $a>0$ and $y<0$ let $\omega_a(y):=\omega(-y)1_{y\geq -a} $. Applying Corollary 2.2 of \cite{MR3849809}  we have for $-\gamma<-a<-z_0<-b<0$ and with $\hat{Z}_0=-Z_0=-z_0$,
	\[\mbb{\hat E}_{ -z_0} \left[ \exp\left(-\int_0^{\hat{\tau}^{+}_{-b}} \omega_a(\hat{Z}_s)ds\right); \, \hat{\tau}^{+}_{-b}<\hat{\tau}^-_{-\gamma} \right]=
	\frac{W^{(\omega_a)}(-z_0,-\gamma)}{W^{(\omega_a)}(-b,-\gamma)}, \]
	where for any $0>x>y$, $W^{(\omega_a)}(x,y)$ satisfies equation
	\begin{equation}\label{integral_equ}
	W^{(\omega_a)}(x,y)=W(x-y)+\int_y^x W(x-z)\omega_a(z)W^{(\omega_a)}(z,y)\ddr z.
	\end{equation}
	Similarly to the proof of  Li and Palmowski \cite[Theorem 2.5]{MR3849809}, we can show that
	$H^{(\omega_a)}(x):=\lim_{\gamma\rightarrow\infty}\frac{W^{(\omega_a)}(x,-\gamma)}{W(x+\gamma)}$ exists for all $x<0$ and
	\[\mbb{\hat E}_{-z_0} \left[ \exp\left(-\int_0^{\hat{\tau}^{+}_{-b}} \omega_a(\hat{Z}_s)ds\right); \, \hat{\tau}^{+}_{-b}<\infty \right]=\frac{H^{(\omega_a)}(-z_0)}{H^{(\omega_a)}(-b)}. \]
	In addition, dividing both sides of equation (\ref{integral_equ}) by $W(x-y)$ and taking limits as $y\rightarrow -\infty$, we see that the function $H^{(\omega_a)}(x), x<0$, is the unique solution to the equation
	\[H^{(\omega_a)}(x)=1+\int_{-a}^x W(x-z)\omega(-z)H^{(\omega_a)}(z)\ddr z. \]
	For $x>0$ put $W^{(\omega,a)}(x):=H^{(\omega_a)}(-x)$. Then $W^{(\omega, a)}(x)$ solves equation
	\[W^{(\omega, a)}(x)=1+\int^{a}_x  W(z-x)\omega(z)W^{(\omega, a)}(z)\ddr z. \]
	It follows that
	$W^{(\omega,a)}(x)=\sum^\infty_{n=0} W^{(\omega,a)}_n(x)$
	where for $0<x<a$,
	$W^{(\omega,a)}_0(x)=1$, $W^{(\omega, a)}_{n+1}(x)=\int_{x}^a W(z-x)\omega(z)W^{(\omega,a)}_n(z)\ddr z$.
	By induction on $n$ we can show that $W^{(\omega,a)}_n(x)$ increases in $a$ and decreases in $x$. Write
	$W^{(\omega)}_n(x):=\lim_{a\rightarrow\infty} W^{(\omega, a)}_n(x), \quad x>0$.
	Then $W^{(\omega)}_n$ satisfies (2.5) and for $x\geq b$,
	$\sum^\infty_{n=0} W^{(\omega)}_n(x)\leq \sum^\infty_{n=0} W^{(\omega)}_n(b)<\infty.$
	Finally, by monotone convergence
	\begin{equation}
	\begin{split}
	&\mbb{E}_x\left[\exp\left(- \int_0^{\tau_b^{-}} \omega(Z_s)ds\right); \tau_b^{-}<\infty  \right]\\
	&=\hat{\mbb{E}}_{-x}\left[\exp\left(- \int_0^{\hat{\tau}^{+}_{-b}} \omega(-\hat{Z}_s)ds\right); \hat{\tau}^{+}_{-b}<\infty  \right]\\
	&=\lim_{a\rightarrow\infty}\hat{\mbb{E}}_{-x}\left[\exp\left(- \int_0^{\hat{\tau}^{+}_{-b}} \omega_a(\hat{Z}_s)ds\right); \hat{\tau}^{+}_{-b}<\infty  \right]\\
	&=\lim_{a\rightarrow\infty}\frac{W^{(\omega,a)}(x)}{W^{(\omega,a)}(b)}
	=\lim_{a\rightarrow\infty}\frac{\sum^\infty_{n=0} W^{(\omega,a)}_{n}(x)}{\sum^\infty_{n=0} W^{(\omega,a)}_{n}(b)}
	=\frac{\sum^\infty_{n=0} W^{(\omega)}_{n}(x)}{\sum^\infty_{n=0} W^{(\omega)}_{n}(b)}.\\
	\end{split}
	\end{equation}
	
	\begin{remark}
		Observe from the proof of Theorem 2.7 that the result holds without the assumption of $\gamma\geq 0$.
	\end{remark}


	\bibliographystyle{plain}

\end{document}